\title{The Noether inequality for threefolds and three moduli spaces with minimal volumes}
\date{\today} 
\author{Meng Chen}
\address{\rm School of Mathematical Sciences \& Shanghai Center for Mathematical Sciences, Fudan University, Shanghai 200433, China}
\email{mchen@fudan.edu.cn}
\author{Yong Hu}
\address{\rm School of Mathematical Sciences, Shanghai Jiao Tong University, Shanghai 200240, China}
\email{yonghu@sjtu.edu.cn}
\author{Chen Jiang}
\address{\rm Shanghai Center for Mathematical Sciences \& School of Mathematical Sciences, Fudan University, Shanghai 200438, China}
\email{chenjiang@fudan.edu.cn}
\newcommand{\bQ}{{\mathbb Q}}
\newcommand{\bP}{{\mathbb P}}
\newcommand{\roundup}[1]{\lceil{#1}\rceil}
\newcommand{\rounddown}[1]{\lfloor{#1}\rfloor}
\newcommand\Vol{\text{\rm Vol}}
\newcommand\OO{{\mathcal{O}}}
\newcommand{\lsgeq}{\succcurlyeq}
\newcommand{\Mov}{\operatorname{Mov}}
\newcommand{\Proj}{\operatorname{Proj}}
\newcommand{\coeff}{\operatorname{coeff}}
\newcommand{\Image}{\operatorname{Im}}
\newcommand{\Supp}{\operatorname{Supp}}
\newcommand{\Aut}{\operatorname{Aut}}
\newcommand{\wt}{\operatorname{wt}}
\newtheorem{thm}{Theorem}[section]
\newtheorem{lem}[thm]{Lemma}
\newtheorem{prop}[thm]{Proposition}
\newtheorem{claim}[thm]{Claim}
\theoremstyle{definition}
\newtheorem{question}[thm]{Question}
\newtheorem{setting}[thm]{Setting}
\newtheorem{rem}[thm]{Remark}
\theoremstyle{remark}
\begin{document}
\begin{abstract} 
We establish the Noether inequality  
 \[\textrm{Vol}(X)\geq \frac{4}{3}p_g(X)-\frac{10}{3}\]  for all projective $3$-folds $X$ of general type with geometric genus $5\leq p_g(X)\leq 10$ where $\textrm{Vol}(X)$ is the canonical volume. This result resolves all remaining cases of the Noether inequality for $3$-folds.

We further investigate the moduli spaces of canonical $3$-folds with small genera and minimal volumes.
For a $3$-fold of general type with geometric genus $2$ and with minimal canonical volume $\frac{1}{3}$, 
we prove that its canonical model  is a hypersurface of degree $16$ in $\mathbb{P}(1,1,2,3,8)$, which gives an explicit description of its canonical ring. This implies that the coarse moduli space $\mathcal{M}_{\frac{1}{3}, 2}$, parametrizing all canonical $3$-folds with canonical volume $\frac{1}{3}$ and geometric genus $2$, is an irreducible unirational variety of dimension $189$. Parallel studies show that $\mathcal{M}_{1, 3}$ is irreducible, unirational, and  $236$-dimensional, and that $\mathcal{M}_{2, 4}$ is irreducible, unirational, and $270$-dimensional. As being conceived, every member in these 3 families is simply-connected. 
\end{abstract}

\keywords{canonical threefold, moduli space, canonical volume, weighted hypersurface}
\subjclass[2020]{14J30, 14J10, 14D22}

\maketitle

\pagestyle{myheadings}
\markboth{\hfill M. Chen, Y. Hu, C. Jiang\hfill}{\hfill The Noether inequality for threefolds and three moduli spaces \hfill}
\numberwithin{equation}{section}


\section{Introduction} 

Within birational geometry, perhaps the last phase of classifying projective varieties is to construct and to study the geometry of the concrete moduli space for each designated class. This is corroborated by the theory for curves and surfaces. 

In higher dimensions, the birational classification of smooth projective varieties of general type splits into two steps: the first step is to find a good representative in each birational equivalence class, namely, the canonical model, and this step is completed by the minimal model program (see \cite{KMM, K-M, BCHM}); the second step is to study the geometry of canonical models and the moduli space of canonical models with given discrete numerical invariants (see \cite{KSB,Vie95,Kollar-moduli}).
For example, we are interested in important invariants such as the geometric genus $p_g$
and the canonical volume $\text{\rm Vol}$. 
While the general moduli theory is well-established, the explicit geometry of certain concrete moduli spaces is not yet well-understood. 
 
For a positive rational number $a$ and a nonnegative integer $b$, denote by $\mathcal{M}_{a,b}$ the coarse moduli space parametrizing all projective $3$-folds $X$ with canonical singularities such that $K_X$ is ample, $\text{\rm Vol}(X)(=K_X^3)=a$, and $p_g(X)=b$ in the sense of Viehweg \cite[\S8.5]{Vie95} and Koll\'ar--Shepherd-Barron \cite[\S5.4]{KSB} (see also \cite{Kollar-moduli}). 
Then there are 2 main problems about $\mathcal{M}_{a,b}$: the first problem is to determine whether $\mathcal{M}_{a,b}$ is non-empty, namely, to determine the restriction on $(a,b)$ so that there exists at least one such projective $3$-fold, this is usually referred to as the geography problem; the second problem is, when $\mathcal{M}_{a,b}$ is nonempty, 
to know its geometric properties as detailed as possible, for example, the number of its irreducible or connected components and the dimension of each component. 
A further task is to construct an ideal compactification for ${\mathcal{M}_{a,b}}$, and one successful way is the KSB or KSBA moduli theory via the minimal model program (see \cite{KSB, Kollar-moduli}). To the best of our knowledge, for the explicit geometry of moduli spaces of $3$-folds of general type, very little is known in the literature. 


In this paper, first we study the geography problem by completely solving the Noether inequality for $3$-folds, then we investigate the birational geometry and moduli spaces of several special families, of $3$-folds of general type, with small genera and attaining minimal canonical volumes. 


\subsection{The Noether inequality for $3$-folds}\
The classical Noether inequality, established by Max Noether \cite{Noether70} over $150$ years ago, says that for any smooth minimal surface $S$ of general type,
\[
K_S^2\geq 2p_g(S)-4. 
\]
It is a milestone in the history of the classification theory of algebraic surfaces. 

To establish the correct form of the Noether inequality in dimension $3$ is a classical and long-standing problem. {The works \cite{Kob, Chen04, CCZ, CC15} treated Gorenstein  minimal 3-folds. For an arbitrary minimal $3$-fold $X$ of general type, it was proved by Chen \cite{Chen07} (in the case $p_g(X)\leq 4$) and by Chen, Chen and Jiang \cite{Noether, Noether_Add} (in the case $p_g(X)\geq 11$) that the 
Noether inequality \[K_X^3\geq \frac{4}{3}p_g(X)-\frac{10}{3}\] holds}. Hu and Zhang \cite{HZ} classified the equality case when $p_g(X)\geq 11$.

In this paper, we completely solve all remaining cases ($5\leq p_g\leq 10$) of the Noether inequality.

\begin{thm}\label{thm: pg 5} Let $X$ be a minimal $3$-fold of general type. Then the 
Noether inequality \[K_X^3\geq \frac{4}{3}p_g(X)-\frac{10}{3}\] holds. Moreover, when $p_g(X)\geq 5$, the equality holds only if the canonical image of $X$ is a surface and $p_g(X)\equiv 1 \bmod 3$. 
\end{thm}
The last assertion partially recovers a result by Hu--Zhang \cite{HZ}. 

{It is known that the most difficult case in proving Theorem~\ref{thm: pg 5} is when $X$ is canonically fibered by $(1,2)$-surfaces (see Proposition~\ref{prop: (1,2) pencil}). We will explain our strategy to overcome this in Remark~\ref{rem proof of noether}}. 
We also get the following result {which gives a better slope for those minimal $3$-folds, of general type, canonically fibered by $(1,2)$-surfaces}. Some more explicit inequalities can be seen in the end of \S\ref{sec: pg5}. 

\begin{thm}[see Theorem~\ref{thm: noether 12 slop}]\label{thm: noether complement} Let $X$ be a minimal $3$-fold of general type with $p_g(X)\geq 11$. {Assume that $|K_X|$ is composed with a pencil of $(1,2)$-surfaces. Then 
\[K_X^3\ge \frac{1}{2}\rounddown{\frac{27p_g(X)-38}{10}}.\]}
\end{thm}

\subsection{Moduli spaces of canonical $3$-folds with small genera and minimal volumes}\

We start by recalling the following theorem:

\begin{thm}[{\cite[Theorem~1.4, Theorem~1.5]{Chen07}}] Let $W$ be a smooth projective $3$-fold of general type. Denote by $\Vol(W)$ the canonical volume of $W$. Then
\begin{enumerate}
\item when $p_g(W)\geq 2$, $\Vol(W)\geq \frac{1}{3}$;
\item when $p_g(W)\geq 3$, $\Vol(W)\geq 1$;
\item when $p_g(W)\geq 4$, $\Vol(W)\geq 2$.
\end{enumerate}
Furthermore, the above three lower bounds $\frac{1}{3}$, $1$ and $2$ are all optimal and the minimal volume is attained only if $p_g=2,3,4$, respectively. 
\end{thm}

It is a natural question to study the classification problem 
for those $3$-folds attaining the above minimal volumes, as analogue to curves with small genus or surfaces with small $c_1^2$. Also, it is expected that the classification of such $3$-folds with extreme numerical values will benefit the development of explicit classification theory of higher dimensional algebraic varieties. 

To study the birational geometry of a variety $W$ of general type, one classical method is to study its {\it canonical ring} $R(W):=\bigoplus_{k\geq 0}H^0(W, kK_W)$ which originates from the work of Zariski and Mumford (\cite{Zariski}). Reid proposed a program to study the canonical ring from the point of view of commutative algebra which predicts the construction of $W$ (as a subvariety in a weighted projective space) from the algebraic structure of $R(W)$, see \cite{Rei79, Rei05}. However, to the best of our knowledge, the explicit algebraic structure of the canonical ring of a variety is only known in the case that the given variety has a very clear geometric structure (e.g. a complete intersection in a weighted projective space). In this paper, we will give explicit descriptions for the canonical rings of smooth projective $3$-folds of general type with $2\leq p_g(X)\leq 4$ attaining minimal volumes.

Our main results are as follows. 

\begin{thm} 
\label{thm: pg=2}
Let $W$ be a smooth projective $3$-fold of general type with $p_g(W)\geq 2$ and $\Vol(W)=\frac{1}{3}$. 
Then 
\begin{enumerate}
 \item $W$ has the same plurigenera as those of a general hypersurface of degree $16$ in $\mathbb{P}(1, 1, 2, 3, 8)$; 
 
 \item the canonical model of $W$ is a hypersurface in $\mathbb{P}(1,1,2,3,8)$ defined by a weighted homogeneous polynomial $f$ of degree $16$, where 
\[f(x_0, \dots,
x_4)=x_4^2+f_0(x_0, x_1, x_2, x_3)\] in suitable homogeneous coordinates $[x_0:\dots:
x_4]$ of $\mathbb{P}(1, 1, 2, 3, 8)$; in other words, the canonical ring of $W$ is 
\[
R(W)\simeq \mathbb{C}[x_0, \dots,
x_4]/(f),
\]
where $\mathbb{C}[x_0, \dots,
x_4]$ is viewed as a weighted polynomial ring with $\wt(x_0, \dots,
x_4)=(1,1,2,3,8)$;

\item $W$ is simply-connected;

\item the moduli space $\mathcal{M}_{\frac{1}{3},2}$ is irreducible, unirational, and $\dim \mathcal{M}_{\frac{1}{3},2}=189$. 
\end{enumerate}
\end{thm}

 \begin{thm}\label{thm: pg=3}
Let $W$ be a smooth projective $3$-fold of general type with $p_g(W)\geq 3$ and $\Vol(W)=1$. 
Then 
\begin{enumerate}
 \item $W$ has the same plurigenera as those of a general hypersurface of degree $12$ in $\mathbb{P}(1, 1, 1, 2, 6)$; 
 
 \item the canonical model of $W$ is a hypersurface in $\mathbb{P}(1,1,1,2,6)$ defined by a weighted homogeneous polynomial $f$ of degree $12$, where 
\[f(x_0, \dots,
x_4)=x_4^2+f_0(x_0, x_1, x_2, x_3)\] in suitable homogeneous coordinates $[x_0:\dots:
x_4]$ of $\mathbb{P}(1, 1, 1, 2, 6)$; in other words, the canonical ring of $W$ is 
\[
R(W)\simeq \mathbb{C}[x_0, \dots,
x_4]/(f),
\]
where $\mathbb{C}[x_0, \dots,
x_4]$ is viewed as a weighted polynomial ring with $\wt(x_0, \dots,
x_4)=(1,1,1,2,6)$;

\item $W$ is simply-connected;

\item the moduli space $\mathcal{M}_{1,3}$ is irreducible, unirational, and $\dim \mathcal{M}_{1,3}=236$. 
\end{enumerate}
\end{thm}

 \begin{thm}\label{thm: pg=4}
Let $W$ be a smooth projective $3$-fold of general type with $p_g(W)\geq 4$ and $\Vol(W)=2$. 
Then 
\begin{enumerate}
 \item $W$ has the same plurigenera as those of a general hypersurface of degree $10$ in $\mathbb{P}(1, 1, 1, 1, 5)$; 
 
 \item the canonical model of $W$ belongs to either of the following $2$ types:
 \begin{enumerate}
 \item {\bf Type I}: a hypersurface in $\mathbb{P}(1,1,1,1,5)$ defined by a weighted homogeneous polynomial $f$ of degree $10$, where 
\[f(x_0, \dots, x_4)=x_4^2+f_0(x_0, x_1, x_2, x_3)\] in suitable homogeneous coordinates $[x_0:\dots: x_4]$ of $\mathbb{P}(1, 1, 1, 1, 5)$; in other words, the canonical ring of $W$ is 
\[
R(W)\simeq \mathbb{C}[x_0, \dots, x_4]/(f),
\]
where $\mathbb{C}[x_0, \dots, x_4]$ is viewed as a weighted polynomial ring with $\wt(x_0, \dots, x_4)=(1,1,1,1,5)$;

 \item {\bf Type II}: a subvariety in $\mathbb{P}(1,1,1,1,2, 5)$ defined by $2$ weighted homogeneous polynomials $q$ and $f$ of degrees $2$ and $10$ respectively, where 
\begin{align*}
 q(x_0, x_1, x_2, x_3){}&=x_3^2+q_0(x_0, x_1, x_2),\\
 f(x_0, \dots, x_5){}&=x_5^2+f_0(x_0, \dots, x_4)
\end{align*} in suitable homogeneous coordinates $[x_0:\dots: x_5]$ of $\mathbb{P}(1, 1, 1, 1, 2, 5)$; in other words, the canonical ring of $W$ is 
\[
R(W)\simeq \mathbb{C}[x_0, \dots, x_5]/(q, f),
\]
where $\mathbb{C}[x_0, \dots, x_5]$ is viewed as a weighted polynomial ring with $\wt(x_0, \dots, x_5)=(1,1,1,1,2,5)$;
 \end{enumerate}
furthermore, a canonical $3$-fold of type II is a specialization of canonical $3$-folds of type I;

\item $W$ is simply-connected;

\item the moduli space $\mathcal{M}_{2,4}$ is irreducible, unirational, and $\dim \mathcal{M}_{2,4}=270$.
\end{enumerate}
\end{thm}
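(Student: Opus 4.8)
The plan is to replace $W$ by its canonical model $X$, so that $K_X^3=\Vol(W)=2$, and to observe first that the cited theorem forces the minimal volume $2$ to be attained only when $p_g=4$; hence $p_g(X)=4$. The opening step is numerical: applying Reid's orbifold Riemann--Roch formula to $X$ together with the Kawamata--Viehweg vanishing theorem (so that $P_m=\chi(\OO_X(mK_X))$ for $m\geq 2$), and using the constraints $K_X^3=2$, $P_1=4$, and the integrality and nonnegativity of the $P_m$, I would determine $\chi(\OO_X)$ and the singularity basket of $X$, concluding in particular that $X$ is Gorenstein. This fixes every plurigenus $P_m$; since the model weighted hypersurface has the same invariants, its plurigenera are given by the same Riemann--Roch expression, which yields part (1). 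These values $\dim R(W)_m=P_m$ are also the numerical input used to reconstruct the canonical ring below.

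The geometric core is the study of the canonical map $\varphi_1=\varphi_{|K_X|}\colon X\dashrightarrow \bP^3$. Choosing a resolution $\pi\colon X'\to X$ with $\pi^*K_X=M+Z$ ($M$ moving, $Z$ fixed and effective), I would distinguish cases by $\dim\overline{\varphi_1(X)}$. If this dimension is $3$, the image is necessarily $\bP^3$; $X$ cannot be birational to $\bP^3$ because it is of general type, so $\deg\varphi_1\geq 2$, while the standard inequality $2=K_X^3\geq M^3=\deg\varphi_1$ forces $\deg\varphi_1=2$ and $Z=0$. Thus $\varphi_1\colon X\to\bP^3$ is a generically finite double cover with $\omega_X\cong\pi^*\OO_{\bP^3}(1)$. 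The structure theory of double covers (branch locus $B\in|\OO_{\bP^3}(10)|$, with $L=\OO_{\bP^3}(5)$ determined by $\omega_{\bP^3}\otimes L=\OO_{\bP^3}(1)$) then exhibits $X$ as the degree-$10$ hypersurface $\{x_4^2+f_0=0\}\subset\bP(1,1,1,1,5)$, giving \textbf{Type I}.

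The case $\dim\overline{\varphi_1(X)}=2$ is where the real work lies and produces \textbf{Type II}. Now the four weight-one sections $x_0,\dots,x_3$ of $K_X$ satisfy a relation, so their image $\Sigma\subset\bP^3$ is a nondegenerate surface; combining the fact that a nondegenerate surface in $\bP^3$ has degree $\geq 2$ with the volume bound $K_X^3=2$, I expect to force $\Sigma$ to be a quadric and $\varphi_1$ to factor through a fibration over $\Sigma$ with general fibre a curve of controlled genus. The delicate part is to reconstruct $R(W)$ precisely: I must produce an additional degree-$2$ generator $x_4$ and a degree-$5$ generator $x_5$, and prove that the ring is generated in degrees $\leq 5$ subject to exactly the quadric relation $q=x_3^2+q_0$ (recording $\Sigma$) and the degree-$10$ relation $f=x_5^2+f_0$. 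This is carried out by analyzing the multiplication maps $\operatorname{Sym}^k R(W)_1\to R(W)_k$ and the restriction maps along the fibration, matching dimensions against the $P_m$ from the first step; the same genus and volume estimates also rule out $\dim\overline{\varphi_1(X)}\leq 1$. This identifies $X$ with the codimension-two complete intersection in $\bP(1,1,1,1,2,5)$. I regard this surface-image analysis, absent in the $p_g=2,3$ analogues, as the main obstacle.

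It remains to prove the specialization statement, simple connectivity, and the moduli assertions. For the specialization, a general degree-$10$ form in $\bP(1,1,1,1,5)$ can be brought to \textbf{Type I} shape $x_4^2+f_0$ by completing the square using the automorphisms $x_4\mapsto x_4+(\text{degree-}5\text{ form})$, and \textbf{Type II} arises exactly when the canonical image degenerates onto a quadric; exhibiting an explicit one-parameter degeneration then places the Type II locus in the closure of the Type I locus. Simple connectivity of both types follows from the Lefschetz-type theorem for quasi-smooth weighted (complete-intersection) hypersurfaces that avoid the singular point of the ambient space, or equivalently from the fact that a double cover of the simply connected $\bP^3$ branched along an ample divisor is again simply connected. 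Finally, I would compute $\dim\mathcal{M}_{2,4}=\dim\bP(H^0(\bP(1,1,1,1,5),\OO(10)))-\dim\Aut\bP(1,1,1,1,5)=342-72=270$, the Type I hypersurfaces forming an irreducible, open dense family whose image dominates $\mathcal{M}_{2,4}$ with fibres the $\Aut$-orbits; since the Type II locus lies in its closure, $\mathcal{M}_{2,4}$ is irreducible of dimension $270$.
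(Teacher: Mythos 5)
Your overall architecture (case split on the dimension of the canonical image, double-cover analysis for Type I, ring reconstruction for Type II, specialization, and the moduli count $342-72=270$) matches the paper's. But there is a genuine gap at the foundational step. You claim that $\chi(\OO_X)$ and the Reid basket — hence the Gorenstein property and all plurigenera — can be determined ``numerically'' from Reid's Riemann--Roch, $K_X^3=2$, $p_g=4$, and integrality/nonnegativity of the $P_m$. This is false: Reid's formula gives $P_2(X)=\tfrac{1}{2}K_X^3-3\chi(\OO_X)+l(2)$ with two unknowns $\chi(\OO_X)$ and $l(2)\geq 0$, and integrality alone admits infinitely many consistent solutions (e.g.\ $\chi=-4$ with basket $\{4\times(1,2)\}$ gives the perfectly integral $P_2=14$). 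The paper has to work hard for exactly this point: in the generically finite case it invokes Kobayashi's structure theorem to get Gorenstein-ness and $\chi=-3$ from the double cover; in the fibered case it first constructs (Lemma~\ref{lem: free pencil pg 4}, itself a nontrivial MMP argument over the quadric image) a birational minimal model fibered over $\bP^1$ by $(1,2)$-surfaces, then uses Hu--Zhang to get $h^1=h^2=0$ (so $\chi=-3$) and the \emph{upper} bound $P_2\leq 10$, which squeezed against the Riemann--Roch lower bound $P_2\geq 10$ forces $l(2)=0$. Without some substitute for this geometric input, your part (1) is unproved and the dimension counts $|S_k|$ feeding your Type II ring reconstruction have no basis.

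Two secondary issues. First, your Type II reconstruction also needs that $|2K_X|$ is generically finite of degree exactly $2$ and that $|5K_X|$ is birational (to get the new generators $x_4,x_5$ and to see that $q$ and $f$ are the only relations); these are nontrivial facts established in the paper via the same Hu--Zhang analysis and Chen's canonical stability theorem, and your proposal does not indicate how you would obtain them. Second, your simple-connectedness argument applies a Lefschetz-type theorem to the canonical model itself, but the canonical model need not be quasi-smooth (in Type II it is a possibly very singular codimension-two weighted complete intersection); the paper instead deduces simple connectedness of a \emph{general} quasi-smooth member, pushes it to the special fiber via Koll\'ar's surjectivity of $\pi_1$ under specialization, and then transfers it to $W$ by Takayama's birational invariance of $\pi_1$ for klt varieties. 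Your double-cover remark covers only Type I with smooth branch locus and does not close this step.
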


\subsection{Key ideas, methods and the outline}\

We briefly explain our main methods and encountered difficulties while proving these theorems. In the study of explicit geometry of $3$-folds of general type, one of the most challenging cases is when $X$ admits a pencil $|F_X|$ which is not free. 
One key observation (see \S3) is, by applying MMP to a log resolution of the pair $(X,F_X)$ in question, to obtain a relatively minimal model $(X', F)$ over $X$ on which $F$ is free. Denote by $\mu:X'\longrightarrow X$ the birational morphism. Such a birational model can be viewed as a minimal partial resolution for the pencil $|F_X|$, which has two effective applications respectively in proving the Noether inequality and in studying moduli spaces.  

The main difficulty in proving Theorem \ref{thm: pg 5} is that the moving part of $|K_X|$ may have base points. Relation \eqref{eq: K+F>K+F} as well as Lemma \ref{lem: 2K>K} allows us to utilize Catanese--Chen--Zhang \cite[Theorem~3.2]{CCZ}, which helps to recognize more information from both $\mu^*(K_X)|_F$ and $|\roundup{2\mu^*(K_X)}|$ (see details in proofs of Proposition  \ref{prop: (1,2) pencil} and Theorem \ref{thm: noether 12 slop}). This method of estimating lower bound of $K_X^3$ proves to be very successful since almost all lower bounds obtained in the table of Theorem \ref{thm: noether 12 slop} are optimal. Of course, here we only need to treat those minimal 3-folds which are canonically fibered by $(1,2)$-surfaces. 

About the other application of the key model $(X',F)$, let us consider Theorem~\ref{thm: pg=2}, for instance, which is the most typical one. The key part is to show that the canonical model can be embedded into $\mathbb{P}(1,1,2,3,8)$.
As in \cite{330-1, 330-2}, we have an effective criterion on how to embed a given polarized variety into a certain weighted projective space. In the context of Theorem~\ref{thm: pg=2}, to apply this criterion, we need to know all the plurigenera of $W$ and the behavior of pluricanonical systems of $W$. 
With the help of the new birational model $(X',F)$, we can efficiently analyze the geometry of $X$, and get the information on the singularities of $X$. Then we analyze the behavior of pluricanonical systems of $X$ which gives restrictions on the plurigenera of $X$. Together with Reid's Riemann--Roch formula, such restrictions will eventually lead us to the precise value of the holomorphic Euler characteristic of $X$ and all plurigenera of $X$. Hence we can prove that the canonical model of $W$ is a weighted hypersurface. This essentially implies other statements of Theorem \ref{thm: pg=2}.
\bigskip

This paper is organized as follows. In \S\ref{sec: 2}, we recall some basic knowledge and background. In \S\ref{sec: 3}, we construct a relative minimal model resolving a pencil by the minimal model program. In \S\ref{sec: pg5}, we prove the Noether inequality for $5\leq p_g\leq 10$ as an application of the birational model in \S\ref{sec: 3}. In \S\ref{sec: pg2}--\S\ref{sec: pg4}, we study the geometry of minimal $3$-folds of general type with $2\leq p_g\leq 4$ and with minimal canonical volume, such as their singularities, pluricanonical systems and plurigenera.
In \S\ref{sec: wt embed criterion}, we recall the criterion in \cite{330-2} in a slightly more general form. 
In \S\ref{sec: proofs}, we prove our main results on the moduli spaces upon the preparation of previous sections.

\section{Preliminaries}\label{sec: 2}

Throughout this paper, we work over the complex number field $\mathbb{C}$, nevertheless all results can be easily generalized to any algebraically closed field of characteristic $0$.

 We adopt the standard notation and definitions in \cite{KMM, K-M}, and will freely use them.

 A $\bQ$-divisor is said to be {\it $\bQ$-effective} if it is $\bQ$-linearly equivalent to an effective $\bQ$-divisor.
 
\subsection{Singularities}\

A {\it log pair} $(X, B)$ consists of a normal projective variety $X$ and an effective $\mathbb{Q}$-divisor $B$ on $X$ such that $K_X+B$ is $\mathbb{Q}$-Cartier.

Let $f: Y\to X$ be a log
resolution of the log pair $(X, B)$, write
\[
K_Y =f^*(K_X+B)+\sum a_iD_i,
\]
where $\{D_i\}$ are distinct prime divisors and the sum runs through all irreducible components in the union of $f$-exceptional divisors and the strict transform of $B$. The log pair $(X,B)$ is called
\begin{enumerate}[label=(\alph*)]
\item \emph{kawamata log terminal} (\emph{klt},
for short) if $a_i> -1$ for all $i$;

\item \emph{purely log terminal} (\emph{plt}, for
short) if $a_i> -1$ for all $f$-exceptional divisors $F_i$ and all $f$;

\item \emph{terminal} if $a_i> 0$ for all $f$-exceptional divisors $D_i$ and all $f$;
\item \emph{canonical} if $a_i\geq 0$ for all $f$-exceptional divisors $D_i$ and all $f$.
\end{enumerate}
Usually we write $X$ instead of $(X,0)$ in the case $B=0$.

\subsection{Minimal model and canonical model}\label{sec 2.2}\

A normal projective variety $X$ is said to be {\it minimal} if $X$ has $\mathbb{Q}$-factorial terminal singularities and $K_X$ is nef.

A normal projective variety $X$ with canonical singularities is called {\it of general type} if $K_X$ is big. 

According to the minimal model program (see \cite{BCHM}), a smooth projective variety $W$ of general type is always birational to a {\it minimal model} $W_{\min}$ which is a minimal variety of general type, and to a unique {\it canonical model} $W_{\textrm{can}}$ which is a normal projective variety with canonical singularities and $K_{W_{\textrm{can}}}$ is ample.
In this case, the {\it canonical volume} of $W$ is defined to be the self-intersection number of the canonical divisor of a minimal model or the canonical model, i.e., $\Vol(W):=K_{W_{\min}}^{\dim W}=K_{W_{\textrm{can}}}^{\dim W}$. The {\it geometric genus} of $W$ is defined by $p_g(W):=h^0(W, K_W)=h^0(W_{\min}, K_{W_{\min}}).$

In the study of birational geometry of a smooth projective variety of general type, we often freely replace it by its minimal model or canonical model.

For two integers $m>0$ and $n\geq 0$, an {\it $(m, n)$-surface} is a smooth projective surface $S$ of general type with $\Vol(S)=m$ and $p_g(S)=n$.

\subsection{Rational maps defined by linear systems}\label{sec: b setting}\

Let $X$ be a normal projective variety on which $D$ is a $\bQ$-Cartier Weil divisor with $h^0(X, D)\geq 2$.
Take a linear subspace $V\subseteq H^0(X, D)$ with $\dim V\geq 2$ and consider the corresponding sublinear system $\Lambda\subseteq |D|$.
We can consider the rational map defined by $\Lambda$, say
$\Phi_{\Lambda}: X {\dashrightarrow} \bP^{\dim V-1}$, which is
not necessarily well-defined everywhere. By Hironaka's big
theorem, we can take successive blow-ups $\pi: W\to X$ such
that:
\begin{enumerate}[label=(\roman*)]
\item $W$ is smooth and projective;
\item corresponding to $\Lambda$, the sublinear system
$\Lambda_W\subseteq |\rounddown{\pi^*(D)}|$ has free movable part $\Mov(\Lambda_W)$ and, consequently,
the rational map $\gamma=\Phi_{\Lambda}\circ \pi$ is a morphism.
\end{enumerate}
Let $W\overset{\psi}\longrightarrow \Sigma'\overset{s}\longrightarrow \Sigma$
be the Stein factorization of $\gamma$ with $\Sigma=\gamma(W)\subseteq
\bP^{\dim V-1}$. There is a commutative
diagram:
\[\xymatrix@=4em{
W\ar[d]_\pi \ar[dr]^{\gamma} \ar[r]^\psi& \Sigma'\ar[d]^s\\
X \ar@{-->}[r]^{\Phi_{\Lambda}} & \Sigma}
\]

\begin{enumerate}
\item If $\dim(\Sigma')\geq 2$, then a general
member of $\Mov(\Lambda_W)$ is a smooth projective variety by
Bertini's theorem, and is called a {\it general irreducible element} of $\Mov(\Lambda_W)$ (or $\Lambda_W$). 


\item If $\dim(\Sigma')=1$, then 
a general fiber $S$ of $\psi$ is a smooth projective variety
by Bertini's theorem. In this case, we say that
$\Lambda$ {\it is composed with a pencil} and $S$ is called a {\it general irreducible element} of $\Mov(\Lambda_W)$ (or $\Lambda_W$).  
$\Lambda$ is said to be {\it rational} if $\Sigma'\simeq \bP^1$. A general
member of $\Mov(\Lambda_W)$ is of the form
$\sum_{i=1}^a S_{i}$, where $S_{i}$ is a smooth fiber of $\psi$ for each $i$. It is clear that $a= \dim V-1$ if $\Sigma'\simeq \bP^1$, and $a\geq \dim V$ if $\Sigma'\not \simeq \bP^1$. 
\end{enumerate}

We will frequently use the following setting.
\begin{setting}\label{Set up for canonical and pluricanonical maps}
Let $X$ be a minimal $3$-fold of general type with $p_g(X)\ge 2$. Take a birational modification $\pi: W\to X$ as in \S\ref{sec: b setting} with respect to the canonical system $|K_X|$ and keep the same notation. 
Write
\begin{align*}
 K_W{}&=\pi^*(K_X)+E_{\pi},\\
 \pi^*(K_X){}& = M + Z_\pi,
\end{align*}
where $|M|=\Mov |\rounddown{\pi^*(K_X)}|$, $E_{\pi}$ and $Z_\pi$ are effective $\mathbb{Q}$-divisors. 
Let $S$ be a general irreducible element of $|M|$. 
After taking some further blow-up, we may further assume that the following hold:
\begin{enumerate}
 \item $\Mov|K_S|$ is base point free; 
 \item for any integers $0<m\le 4$ and $0\leq j\leq m$, the linear system \[|M_{m, -j}|:= \Mov|\rounddown{\pi^*(mK_X)-jS}|\] is base point free or empty; 
 \item  
$\Supp(Z_\pi+E_\pi)$  
is of simple normal crossing.  
\end{enumerate}
Consider the natural restriction map
\[
\theta_{m,-j}: H^0(W, M_{m,-j})\to H^0(S, M_{m,-j}|_S),
\]
and set $u_{m,-j}=\dim \Image \theta_{m,-j}$.
 
\end{setting}

For two linear systems $\Lambda_1$ and $\Lambda_2$, we write $\Lambda_1 \lsgeq \Lambda_2$ if there exists an effective
divisor $N$ such that $\Lambda_1\supseteq \Lambda_2+N$.

\subsection{Reid's Riemann--Roch formula}\

A {\it basket} is a collection of pairs of integers (permitting weights). 
 
Let $X$ be a projective terminal $3$-fold. 
According to 
Reid \cite{Rei87}, there is a basket of orbifold points (called the {\it Reid basket})
\[B_X=\bigg\{(b_i,r_i)\mid i=1,\cdots, s; 0<b_i\leq \frac{r_i}{2};b_i \text{ is coprime to } r_i\bigg\}\]
associated to $X$, which comes from locally deforming the singularities of $X$ into cyclic quotient singularities, where a pair $(b_i,r_i)$ corresponds to a (virtual) orbifold point $Q_i$ of type $\frac{1}{r_i}(1,-1,b_i)$.

By Reid's Riemann--Roch formula \cite[Corollary~10.3]{Rei87}, for any positive integer $m$, 
\begin{align}
 {}&\chi(X, \OO_X(mK_X))\notag\\
 ={}&\frac{1}{12}m(m-1)(2m-1)K_X^3-(2m-1)\chi(\mathcal{O}_X)+l(m), \label{eq: RR}
\end{align}
where
$l(m)=\sum_i\sum_{j=1}^{m-1}\frac{\overline{jb_i}(r_i-\overline{jb_i})}{2r_i}$ and the first sum runs over all orbifold points in the Reid basket. Here $\overline{jb_i}$ means the smallest nonnegative residue of $jb_i \bmod r_i$.

For a positive integer $m$, the {\it $m$-th plurigenus} of $X$ is denoted by $P_m(X):=h^0(X, mK_X)$, and it can be computed by Reid's Riemann--Roch formula and the Kawamata--Viehweg vanishing theorem when $X$ is minimal of general type and $m\geq 2$. 

\subsection{Hodge index theorem}\ 

We recall the following well-known result which is a consequence of the Hodge index theorem. 
\begin{lem}\label{lem: HIT}
 Let $D_1, D_2$ be $\mathbb{Q}$-Cartier $\mathbb{Q}$-divisors on a normal projective surface $S$. 
\begin{enumerate}
 \item If $D_1, D_2$ are nef, $D_1^2=D_2^2>0$, and $D_1-D_2$ is effective, then $D_1=D_2$.

 \item If $D_1, D_2$ are effective, $(D_1\cdot D_2)=0$, and $D_1+D_2$ is nef and big, then either $D_1=0$ or $D_2=0$.
 
\end{enumerate}
\end{lem}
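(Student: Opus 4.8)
The plan is to deduce both statements from the Hodge index theorem, which I would invoke in its inequality form: on a \emph{smooth} projective surface, for $\mathbb{Q}$-divisors $A,B$ with $A^2>0$ one has $(A\cdot B)^2\ge A^2B^2$, and, equivalently, if $A$ is nef and big and $A\cdot B=0$ then $B^2\le 0$ with equality only when $B$ is numerically trivial. Since $S$ is only assumed normal, I would first reduce to the smooth case by pulling everything back along a resolution $\mu\colon\widetilde S\to S$: for $\mathbb{Q}$-Cartier divisors the pullback preserves intersection numbers and the sign of $A^2$, sends nef to nef and effective to effective (pull back a local equation of a Cartier multiple), and reflects numerical triviality via $\mu_*\mu^*=\mathrm{id}$, so it is harmless to assume $S$ smooth. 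I would also use repeatedly that an effective $\mathbb{Q}$-divisor which is numerically trivial must be zero, since it meets an ample divisor in $0$.

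For part (1), set $N:=D_1-D_2\ge 0$. The Hodge index inequality together with $D_1^2=D_2^2>0$ and $D_1\cdot D_2\ge 0$ (both divisors being nef) gives $D_1\cdot D_2\ge D_1^2$. Hence $N\cdot D_1=D_1^2-D_1\cdot D_2\le 0$, while nefness of $D_1$ and effectivity of $N$ force $N\cdot D_1\ge 0$; therefore $N\cdot D_1=0$ and $D_1\cdot D_2=D_1^2$. Feeding this back yields $N\cdot D_2=D_1\cdot D_2-D_2^2=0$, whence $N^2=N\cdot D_1-N\cdot D_2=0$. Now $N\cdot D_1=0$ with $D_1$ nef and big (as $D_1^2>0$) together with $N^2=0$ force $N$ to be numerically trivial by the equality case of Hodge index; being effective, $N=0$, i.e. $D_1=D_2$.

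For part (2), write $L:=D_1+D_2$, which is nef and big, so $L^2>0$. From $D_1\cdot D_2=0$ we get $L\cdot D_1=D_1^2$ and $L\cdot D_2=D_2^2$, and nefness of $L$ gives $D_1^2,D_2^2\ge 0$. If $D_1^2=0$, then $L\cdot D_1=0$ with $L$ nef and big forces $D_1$ to be numerically trivial, hence $D_1=0$ since it is effective; symmetrically $D_2^2=0$ yields $D_2=0$. It remains to exclude the case $D_1^2>0$ and $D_2^2>0$ simultaneously: then the Hodge index inequality would give $0=(D_1\cdot D_2)^2\ge D_1^2D_2^2>0$, a contradiction. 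Thus at least one of $D_1,D_2$ vanishes.

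The only genuine subtlety I anticipate is the passage from the normal surface $S$ to its resolution, where I must verify that effectivity and numerical triviality interact correctly with $\mu^*$ and $\mu_*$; once that bookkeeping is in place, the two arguments are short formal manipulations of the intersection form, so I do not expect any serious obstacle in the main estimates.
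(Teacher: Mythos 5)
Your proof is correct and rests on the same tool as the paper's: the Hodge index theorem together with its equality case (an effective numerically trivial divisor vanishes). The only differences are cosmetic bookkeeping — in (1) the paper gets $D^2=0$ directly from $((D_1+D_2)\cdot D)=D_1^2-D_2^2=0$ rather than via the inequality $(D_1\cdot D_2)^2\geq D_1^2D_2^2$, and the paper invokes the Hodge index theorem on the normal surface itself instead of passing to a resolution — so this counts as essentially the same argument.
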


\begin{proof}
 Recall that the Hodge index theorem states that if $H_1$ and $H_2$ are $\mathbb{Q}$-Cartier $\mathbb{Q}$-divisors on $S$ such that $H_1^2>0$ and 
$(H_1\cdot H_2)=0$, then $H_2^2\leq 0$; moreover, the equality holds if and only if 
$H_2\equiv 0$. Here recall that if $H_2$ is effective, then $H_2\equiv 0$ if and only if $H_2=0$.

(1) Denote $D=D_1-D_2\geq 0$. Since $0=D_1^2-D_2^2=((D_1+D_2)\cdot D)$ and $D_1, D_2$ are nef, we have $(D_1\cdot D)=(D_2\cdot D)=0$. In particular, $D^2=0$. On the other hand, $(D_1+D_2)^2>0$. So by the Hodge index theorem, $D=0$.

(2) Note that $D_1^2+D_2^2=(D_1+D_2)^2>0$. So either $D_1^2>0$ or $D_2^2>0$. Without loss of generality, we may assume that $D_1^2>0$. Then $(D_1\cdot D_2)=0$ implies that $D_2^2\leq 0$ by the Hodge index theorem. On the other hand, $D_2^2=(D_2\cdot (D_1+D_2))\geq 0$ as $D_1+D_2$ is nef. Hence $D_2^2= 0$ and $D_2=0$ by the Hodge index theorem.
\end{proof}

 \section{A relatively minimal model resolving a pencil}\label{sec: 3}
 
In this section, we construct a birational model which resolves a given pencil by using the minimal model program.

 \begin{prop}\label{prop: new model}
Let $X$ be a normal $\mathbb{Q}$-factorial projective variety on which a linear system $\Lambda$ is composed with a pencil. 
 Then there exists a projective variety $X'$ with a birational morphism $g: X'\to X$ having the following properties:
 \begin{enumerate}
 \item $X'$ is $\mathbb{Q}$-factorial terminal;
\item 
 the movable part of the linear system
$\Lambda'$, on $X'$, corresponding to $\Lambda$ is base point free 
and, after taking the Stein factorization, ${\Lambda'}$ induces a fibration $f: X'\to \Gamma$ onto a curve $\Gamma$;

 \item for a general fiber $F$ of $f$, $K_{X'}+F$ is $g$-nef;
 \item $g^*(K_X+ g_*F)-K_{X'}- F$ is an effective $g$-exceptional $\mathbb{Q}$-divisor.
 \end{enumerate}
 \end{prop}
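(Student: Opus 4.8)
The plan is to resolve the pencil and then run a relative minimal model program over $X$ that carries the fibre class in its boundary. Choose a birational modification $h\colon Y\to X$ as in \S\ref{sec: b setting} with $Y$ smooth, $h$ a log resolution, and the movable part $M$ of the transform $\Lambda_Y$ of $\Lambda$ base point free. As $\Lambda$ is composed with a pencil, $|M|$ followed by Stein factorization yields a fibration $f_Y\colon Y\to\Gamma$ onto a smooth curve, with $M\equiv aF_Y$ for some integer $a\ge1$ and $F_Y$ a general (hence smooth) fibre. I then run the $(K_Y+F_Y)$-MMP over $X$; since $(Y,F_Y)$ is log smooth the required contractions and flips exist, and I take $X'$ to be the end product, $g\colon X'\to X$ the induced morphism, and $F$ the strict transform of $F_Y$.

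Granting termination (which holds in the cases we apply it, in particular for threefolds), properties (1) and (3) follow almost formally, and (4) then follows formally. Every extremal ray $R$ contracted in this relative MMP is spanned by a curve contracted by $g$ and has $(K_Y+F_Y)\cdot R<0$; since $F_Y$ is a general fibre it is nef, so $F_Y\cdot R\ge0$ and hence $K_Y\cdot R<0$. Thus each step is also a step of the $K_Y$-MMP, and as $Y$ is smooth these steps preserve $\mathbb{Q}$-factorial terminal singularities, giving (1); at termination $K_{X'}+F$ is nef over $X$, which is (3). For (4) set $D:=g^*(K_X+g_*F)-(K_{X'}+F)$. Since $g_*K_{X'}=K_X$ we get $g_*D=0$, so $D$ is $g$-exceptional, while $-D=(K_{X'}+F)-g^*(K_X+g_*F)$ is $g$-nef by (3) because $g^*(\,\cdot\,)$ is $g$-numerically trivial. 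The negativity lemma then gives $D\ge0$, which is exactly (4).

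The crux is (2): I must show $f_Y$ descends to $f\colon X'\to\Gamma$ and that the movable part of $\Lambda'$ on $X'$ is still base point free. It suffices to prove that the whole MMP is $M$-trivial, i.e.\ that every contracted ray $R$ satisfies $M\cdot R=0$, equivalently $F_Y\cdot R=0$ so that $R$ is vertical for $f_Y$; then $f_Y$ factors through each contraction and flip, the semiample divisor $M\equiv aF_Y$ descends to a base point free $aF$ on $X'$, and $F$ is a general fibre of the resulting $f$. The main obstacle is therefore ruling out horizontal contracted rays. To handle this I would write $M=h^*D-V$ with $D$ the movable part of $\Lambda$ on $X$ and $V\ge0$ the $h$-exceptional fixed part, so that over $X$ one has $M\cdot R=-V\cdot R$ and, combining with $K_Y=h^*K_X+\sum_i a_iE_i$, the identity $(K_Y+F_Y)\cdot R=\sum_i\big(a_i-\tfrac{v_i}{a}\big)(E_i\cdot R)$, where $V=\sum_i v_iE_i$. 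The point to establish is that for a general fibre the signs are forced: a horizontal $R$ must meet some exceptional $E_i$ negatively while the corresponding fixed-part multiplicity $v_i$ dominates $a\,a_i$, so that $(K_Y+F_Y)\cdot R\ge0$ and $R$ cannot be contracted. Making this sign bookkeeping uniform is the delicate part; failing a clean argument, I would instead run the MMP over the image of $(h,f_Y)$ in $X\times\Gamma$ — which keeps $f_Y$ by construction — and then verify separately that the resulting $K_{X'}+F$ is already nef over $X$ along the multisections lying over the base locus of the pencil.
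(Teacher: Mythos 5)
There is a genuine gap, and it sits exactly at the point you flag as the crux: property (2). Your reduction of (2) to the claim that every contracted ray is $M$-trivial is never established — the ``sign bookkeeping'' you sketch for horizontal rays is not carried out, and your fallback (running the MMP over the image in $X\times\Gamma$) preserves the fibration but then only yields nefness of $K_{X'}+F$ over that image, not over $X$, which you again leave unverified. So neither (2) nor (3) is actually proved. The paper sidesteps the horizontal-ray problem entirely by a different choice of boundary: it takes \emph{two} general fibers $S_1,S_2$ of the pencil and runs the $(K_W+\tfrac{1}{2}S_1+\tfrac{1}{2}S_2)$-MMP over $X$. Since this pair is terminal and terminality is preserved by the MMP, the strict transforms $F_1,F_2$ must remain disjoint on $X'$ (otherwise blowing up a codimension-$2$ component of $F_1\cap F_2$ produces a divisor of discrepancy $\le 1-\tfrac12-\tfrac12=0$). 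When $\Gamma\simeq\mathbb{P}^1$ the two disjoint linearly equivalent members then span a base-point-free pencil defining $f:X'\to\Gamma$ directly, with no need to control which rays were contracted; when $g(\Gamma)>0$, contracted rays are spanned by rational curves and hence are automatically vertical. This is the missing idea.

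A secondary problem is your proof of (1). With the single fiber at coefficient $1$ the pair $(Y,F_Y)$ is only canonical, not terminal, so terminality of $X'$ does not come for free from the MMP; your substitute argument — that each step is also a $K_Y$-negative step because $F_Y\cdot R\ge 0$ — uses nefness of the fiber, which holds on $Y$ but is not known for its strict transform at intermediate stages of the MMP (indeed, knowing it at every stage is essentially the unproven $M$-triviality again). The $\tfrac12+\tfrac12$ boundary fixes this too, since the MMP of a terminal pair outputs a terminal pair and hence a terminal $X'$. Your argument for (4) via the negativity lemma is correct and coincides with the paper's.
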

 \begin{proof}
 Take a birational modification $\pi: W\to X$ and a fibration $\psi:W\to \Gamma$ as in \S\ref{sec: b setting} induced by $\Lambda$.
 Take $S_1, S_2$ to be two general fibers of $\psi$, then $S_1$ and $S_2$ are disjoint and smooth. 
The pair $(W, \frac{1}{2} S_1+\frac{1}{2} S_2)$ is klt and terminal. 
By \cite[Corollary~1.4.2]{BCHM}, we may run a $(K_W+ \frac{1}{2} S_1+\frac{1}{2} S_2)$-MMP over $X$ which terminates at a model $g: X'\to X$ such that $K_{X'}+\frac{1}{2} F_1+\frac{1}{2} F_2$ is nef over $X$, where $F_1$ and $F_2$ are strict transforms of $S_1$ and $S_2$ on $X'$. We claim that $X'$ has all required properties.
 
 As $S_1$ and $S_2$ are movable, they are not contracted by the MMP, so $(X', \frac{1}{2} F_1+\frac{1}{2} F_2)$ is terminal by \cite[Corollary~3.42, Corollary~3.43]{K-M}. 
 In particular, $X'$ is $\mathbb{Q}$-factorial terminal.

 We claim that $F_1\cap F_2=\emptyset$. Suppose that $F_1\cap F_2\neq\emptyset$, then there exists a subvariety $Q\subseteq F_1\cap F_2$ of codimension $2$ in $X'$ as $X'$ is $\mathbb{Q}$-factorial. On the other hand, $X'$ is smooth at the generic point of $Q$ by \cite[Corollary~5.18]{K-M}. So by \cite[Lemma~2.29]{K-M}, the blowing up along $Q$ induces an exceptional prime divisor $E$ with discrepancy $a(E, X', \frac{1}{2} F_1+\frac{1}{2} F_2)\leq 1-\frac{1}{2}-\frac{1}{2}= 0,$
 but this contradicts the fact that $(X', \frac{1}{2} F_1+\frac{1}{2} F_2)$ is terminal. So $F_1\cap F_2=\emptyset$.
 
 If $\Gamma\simeq \mathbb{P}^1$, then the free linear system $|F_1|$ induces a natural morphism $f: X'\to \Gamma$. On the other hand, if $g(\Gamma)>0$, then all steps of the MMP are relative over $\Gamma$ since negative extremal rays are generated by rational curves, and hence $W\to \Gamma$ descends to
 a natural morphism $f: X'\to \Gamma$. Clearly in either case, this morphism is defined by the linear system $\Lambda'$, on $X'$, corresponding to $\Lambda$.
 
 Finally consider 
 \[G=g^*(K_X+ g_*F)-K_{X'}- F.\]
 Then $G$ is exceptional over $X$ and \[-G\equiv_X K_{X'}+ F\equiv K_{X'}+\frac{1}{2} F_1+\frac{1}{2} F_2\] is nef over $X$ by our construction. Hence $G\geq 0$ by the negativity lemma \cite[Lemma~3.39]{K-M}.
 \end{proof}

As a consequence of Proposition~\ref{prop: new model}, we will frequently use the following modification for a minimal $3$-fold of general type with nontrivial canonical map.

\begin{setting}\label{set: Special modification}
 
Let $X$ be a minimal $3$-fold of general type with $p_g(X)\ge 2$. Take a general sublinear system $\Lambda\subseteq |K_X|$ which is composed with a pencil. Applying Proposition~\ref{prop: new model} to $\Lambda$, we get a 
 projective birational morphism $\mu: X'\to X$ with a surjective fibration $f: X'\to \Gamma$ 
 such that $X'$ is $\mathbb{Q}$-factorial terminal 
 and
 \begin{align}
 \mu^*(K_X+ F_X)-K_{X'}- F=G\label{eq: K+F>K+F}
 \end{align} 
 is an effective $\mu$-exceptional $\mathbb{Q}$-divisor, where $F$ is a general fiber of $f$ and $F_X=\mu_*F$. 
Here $G$ is independent of $F$ by the negativity lemma \cite[Lemma~3.39]{K-M}. 
We may write
\begin{align*}
K_X{}&=F_X+Z_X,\\
 \mu^*(K_X{})&=F+Z,\\
 K_{X'}{}&=\mu^*(K_X)+E_\mu, 
\end{align*}
 where $Z_X$, $E_\mu$ and $Z$ are effective $\mathbb{Q}$-divisors and $E_\mu$ is $\mu$-exceptional. 
Note that $F$ is a smooth projective surface of general type. Denote by $\sigma: F\to F_0$ the contraction to its minimal model. 
\end{setting}

The following lemma allows us to compare $\mu^*(K_X)|_F$ and $\sigma^*(K_{F_0})$ in an effective way. This is very essential in our proof. Before this, we only have a comparison result up to $\mathbb{Q}$-linear equivalence (see \cite[Corollary~2.3]{Noether}).

\begin{lem}\label{lem: 2K>K}
 Keep the notation in Setting~\ref{set: Special modification}.  
Suppose that $|K_X|$ is composed with a rational pencil, then \[p_g(X)\mu^*(K_X)|_F-(p_g(X)-1)\sigma^*(K_{F_0})\] is effective. 
More precisely, there exists an effective divisor $D\in |K_X|$ and an effective divisor $D_{0}\in |K_{F_0}|$ such that \[p_g(X)\mu^*(D)|_F-(p_g(X)-1)\sigma^*(D_0)\] is an effective $\mathbb{Q}$-divisor.
\end{lem}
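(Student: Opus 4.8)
The plan is to push everything onto a general fibre $F$ of $f$ and to control the single discrepancy term $E_\mu|_F$. Since $|K_X|$ is composed with a rational pencil, I first record that $\Gamma\simeq\mathbb{P}^1$ and that, on $X'$, the base point free movable part of $|K_X|$ is linearly equivalent to $aF$ with $a=p_g(X)-1$; concretely a general $D\in|K_X|$ decomposes as $D=\sum_{i=1}^{a}F_X^{(i)}+V_X$, where $F_X^{(1)},\dots,F_X^{(a)}$ are $a$ general fibres of the pencil and $V_X\geq 0$ is the fixed part. Using adjunction $K_F=(K_{X'}+F)|_F$, the relation $K_{X'}=\mu^*(K_X)+E_\mu$, and the fact that $F|_F\sim 0$ (because $F$ is a fibre of $f$ over $\mathbb{P}^1$), I obtain
\[
K_F=\mu^*(K_X)|_F+E_\mu|_F .
\]
Hence $E_\mu|_F=K_F-\mu^*(K_X)|_F$, and since $\sigma^*(K_{F_0})\leq K_F$, the desired inequality $p_g(X)\mu^*(K_X)|_F\geq(p_g(X)-1)\sigma^*(K_{F_0})$ will follow once I show $\mu^*(K_X)|_F\geq a\,E_\mu|_F$.

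The crucial step, and the only place where the special model of Setting~\ref{set: Special modification} is really needed, is the bound on $E_\mu|_F$. From the defining relation $G=\mu^*(K_X+F_X)-K_{X'}-F$ one reads off $\mu^*(F_X)=F+E_\mu+G$; since $G$ is independent of the chosen general fibre, the same identity $\mu^*(F_X^{(i)})=F^{(i)}+E_\mu+G$ holds for each of the $a$ fibres, with one and the same $E_\mu$ and $G$. Pulling back $D=\sum_i F_X^{(i)}+V_X$ and restricting to a general fibre $F$ distinct from all the $F^{(i)}$, the disjointness of distinct fibres supplied by Proposition~\ref{prop: new model} gives $F^{(i)}|_F=0$ and kills the moving part, yielding
\[
\mu^*(K_X)|_F=a(E_\mu+G)|_F+\mu^*(V_X)|_F .
\]
As $G|_F\geq 0$ and $\mu^*(V_X)|_F\geq 0$, this is exactly $\mu^*(K_X)|_F\geq a\,E_\mu|_F$, whence $(a+1)\mu^*(K_X)|_F\geq aK_F\geq a\,\sigma^*(K_{F_0})$, which is the first assertion since $a+1=p_g(X)$.

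For the more precise statement I would keep this same $D$ and choose any $D_0\in|K_{F_0}|$. Writing $K_F=\sigma^*(K_{F_0})+E_\sigma$ with $E_\sigma\geq 0$ and combining the two displayed identities, the difference becomes
\[
p_g(X)\,\mu^*(D)|_F-(p_g(X)-1)\,\sigma^*(D_0)=\bigl(a\,G+\mu^*(V_X)\bigr)|_F+a\,E_\sigma ,
\]
a sum of effective divisors, which is the required effective $\mathbb{Q}$-divisor. I expect the main difficulty to be bookkeeping rather than conceptual: I must ensure that the identity $\mu^*(F_X^{(i)})=F^{(i)}+E_\mu+G$ genuinely holds with the same $E_\mu,G$ for all $a$ fibres — this is precisely the independence of $G$ from $F$ recorded in Setting~\ref{set: Special modification} — and I must carefully distinguish actual effective divisors from their linear-equivalence classes when replacing $\mu^*(K_X)|_F$ by $\mu^*(D)|_F$ and $\sigma^*(K_{F_0})$ by $\sigma^*(D_0)$, so that the final expression is literally effective.
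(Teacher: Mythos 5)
Your argument follows essentially the same route as the paper's: the paper takes $D=(p_g(X)-1)F'_X+\Delta$ with a single auxiliary fibre $F'$ taken with multiplicity $p_g(X)-1$ and applies $\mu^*(F'_X)=F'+E_\mu+G$ once, whereas you spread the multiplicity over $a=p_g(X)-1$ distinct general fibres; both versions rest on the same key input, namely that $G$ in Setting~\ref{set: Special modification} is independent of the chosen fibre, and both then restrict to a further general fibre $F$ on which the moving fibres vanish. Your proof of the first assertion is correct. The one step that fails as written is in the refined statement: you cannot ``choose any $D_0\in|K_{F_0}|$''. Your displayed identity $p_g(X)\mu^*(D)|_F-(p_g(X)-1)\sigma^*(D_0)=(aG+\mu^*(V_X))|_F+aE_\sigma$ requires the equality of \emph{actual} divisors $(\mu^*(D)+E_\mu)|_F-\sigma^*(D_0)=E_\sigma$, which forces $D_0=\sigma_*\bigl((\mu^*(D)+E_\mu)|_F\bigr)$ — exactly the choice made in the paper; for a general member of $|K_{F_0}|$ the difference is only linearly equivalent to $E_\sigma$ and is typically not effective (already $D_F\geq\sigma^*(D_0)$ fails whenever $D_F$ and $\sigma^*(D_0)$ share no components). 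Even with the correct $D_0$ one should justify why $D_F-\sigma^*\sigma_*D_F=E_\sigma$ for $D_F\in|K_F|$: the difference is $\sigma$-exceptional and linearly, hence numerically, equivalent to the effective exceptional divisor $K_F-\sigma^*(K_{F_0})$, so it equals it by negative definiteness of the exceptional intersection form. You flagged precisely this bookkeeping point at the end but left it unresolved; with that fix your proof is complete and coincides in substance with the paper's.
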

\begin{proof}
To simplify the notation we denote $d=p_g(X)-1$.
Clearly $\Lambda$ and $|K_X|$ are composed with the same pencil. 
So we may write $K_X =dF_X+\Delta$, where $\Delta$ is the fixed part of $|K_X|$. 
Choose a general fiber $F'$ of $f$ different from $F$, denote $F'_X=\mu_*F'$. Then
we can take $D=dF'_X+\Delta$. 

We first explain the construction of $D_0$. 
As $D\in |K_X|$, we have $\mu^*(D)+E_\mu\in |K_{X'}|$. Then $\mu^*(D)|_F+E_\mu|_F\in |K_{F}|$, where the restriction is well-defined as $F\not\subset\Supp(\mu^*(D)+E_\mu)$. Then we just take $D_0:=\sigma_*(\mu^*(D)|_F+E_\mu|_F)\in |K_{F_0}|$.
Clearly $\mu^*(D)|_F+E_\mu|_F-\sigma^*(D_0)=K_F-\sigma^*(K_{F_0})$ is effective.

Applying \eqref{eq: K+F>K+F} to $D\in |K_X|$ and $F'$, one has
\[(d+1)\mu^*(D)\geq d\mu^*(D+F'_X)=d(\mu^*(D)+E_\mu+F'+G)\geq d(\mu^*(D)+E_\mu).\]
Then we get the conclusion by restricting on $F$.  
\end{proof}

The following lemma will be applied to study the singularities and Gorenstein index of $X$ 
when the geometry of $F$ is clear.

\begin{lem}\label{lem: plt}
 Keep the notation in Setting~\ref{set: Special modification}. Suppose that $(F, G|_F)$ is klt. 
 \begin{enumerate}
 \item Then $(X, F_X)$ is plt, $F_X$ is normal and klt, and \[K_F+G|_F=(\mu|_F)^*(K_{F_X}).\]

 \item Suppose that $Z_X=0$. For a non-Gorenstein singularity $P$ of $X$, denote by $r_P$ the Cartier index of $K_X$ at $P$. Then $P\in F_X$ and $r_P$ equals to the Cartier index of $K_X|_{F_X}$ (as a well-defined Weil divisor on $F_X$) at $P$. Furthermore, if $P$ is a Gorenstein point on $F_X$, then $r_P=2$.
 
 \end{enumerate}
\end{lem}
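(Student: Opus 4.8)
The plan is to obtain (1) from inversion of adjunction, carried out on the surface level through the crepant morphism $\mu$, and then to derive (2) from the local structure of terminal singularities via the index-one (canonical) cover. Throughout I keep the notation of Setting~\ref{set: Special modification}.

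For (1), I would first record the crepant identity: combining $K_{X'}=\mu^*(K_X)+E_\mu$ with \eqref{eq: K+F>K+F} gives
\[
K_{X'}+F+G=\mu^*(K_X+F_X),
\]
so $\mu\colon (X',F+G)\to (X,F_X)$ is crepant. Since $X'$ is terminal, $\Sing(X')$ has codimension $\ge 3$, so a general fibre $F$ lies in the smooth locus and is Cartier there; hence adjunction carries no different term and $(K_{X'}+F+G)|_F=K_F+G|_F$. Let $\nu\colon F_X^{\nu}\to F_X$ be the normalization and $\rho\colon F\to F_X^{\nu}$ the induced birational morphism. Adjunction on $F_X^{\nu}$ reads $(K_X+F_X)|_{F_X^{\nu}}=K_{F_X^{\nu}}+\mathrm{Diff}_{F_X^{\nu}}(0)$, and compatibility of pullback with restriction yields $K_F+G|_F=\rho^*\bigl(K_{F_X^{\nu}}+\mathrm{Diff}_{F_X^{\nu}}(0)\bigr)$. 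Thus $\rho$ is crepant for these two surface pairs, so $(F,G|_F)$ klt forces $(F_X^{\nu},\mathrm{Diff}_{F_X^{\nu}}(0))$ klt. By inversion of adjunction \cite[Theorem~5.50]{K-M}, $(X,F_X)$ is plt near $F_X$; since $X$ is terminal (hence $a(E,X,F_X)=a(E,X,0)>0$ for any $E$ with centre off $F_X$), it is plt everywhere, and $F_X$ is normal by \cite[Proposition~5.51]{K-M}, hence klt. Finally, as $X$ is smooth in codimension two, $F_X$ is Cartier at each codimension-one point, so $\mathrm{Diff}_{F_X}(0)=0$ and $(K_X+F_X)|_{F_X}=K_{F_X}$; pulling this back along $\mu|_F$ and comparing with the computation on $F$ gives $K_F+G|_F=(\mu|_F)^*(K_{F_X})$.

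For (2), assume $Z_X=0$, so $K_X\sim F_X$ and $K_{F_X}=(K_X+F_X)|_{F_X}=2K_X|_{F_X}$. If $P\notin\Supp(F_X)$ then $\mathcal{O}_X(K_X)\cong\mathcal{O}_X(F_X)$ would be invertible at $P$, contradicting that $P$ is non-Gorenstein; hence $P\in F_X$. Let $s$ be the Cartier index of $K_X|_{F_X}$ at $P$; restricting the Cartier divisor $r_PK_X$ shows $s\mid r_P$, and the heart of the matter is to prove $s=r_P$. For this I would pass to the index-one cover $\pi\colon Y\to X$ near $P$, a cyclic $\mu_{r_P}$-cover, étale in codimension one, with $K_Y=\pi^*K_X$ and a single point $Q$ over $P$ (recall that a terminal threefold singularity has cyclic local class group generated by $K_X$, cf. \cite{Rei87}). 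Writing $F_Y=\pi^{-1}(F_X)$, the pair $(Y,F_Y)$ is again plt by the étale-in-codimension-one pullback, so $F_Y$ is normal; since $F_Y$ passes through the single point $Q$, normality forces it to be irreducible there, hence the degree-$r_P$ cover $F_Y\to F_X$—which is precisely the cyclic cover attached to the class $K_X|_{F_X}$—is connected. A connected $\mu_{r_P}$-cover attached to a torsion class can occur only if that class has order exactly $r_P$, giving $s=r_P$. For the last assertion, if $P$ is Gorenstein on $F_X$ then $K_{F_X}=2K_X|_{F_X}$ is Cartier at $P$, so $r_P=s\mid 2$; as $r_P\neq 1$, we conclude $r_P=2$.

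The main obstacle is the equality $s=r_P$ in (2): a priori the class $K_X|_{F_X}$ could drop order upon restriction, and ruling this out is exactly the connectedness of $F_Y$ forced by normality through the unique point $Q$. The supporting technical points to verify with care are that the index-one cover has a single point over $P$ and stays plt after pullback (so that $F_Y$ is normal), and, in (1), the legitimacy of discarding the different—which rests on $X$ being smooth in codimension two together with the crepant transfer of klt-ness along $\rho$.
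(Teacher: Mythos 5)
Your proof is correct, and for part (1) it is essentially the paper's argument: both rest on the crepant identity $K_{X'}+F+G=\mu^*(K_X+F_X)$ together with inversion of adjunction \cite[Theorem~5.50]{K-M}; the paper applies inversion of adjunction on $X'$ (where everything near $F$ is smooth) and then descends plt-ness of the threefold pair along $\mu$ via \cite[Theorem~5.48]{K-M}, whereas you descend klt-ness of the surface pair to $(F_X^{\nu},\mathrm{Diff}_{F_X^{\nu}}(0))$ and then invert on $X$ directly---an immaterial reordering. The genuine divergence is in part (2): the paper simply quotes \cite[Theorem~A.1]{HLS} for the equality of $r_P$ with the Cartier index of $K_X|_{F_X}$ at $P$, while you reprove this from scratch via the index-one cover. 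Your argument is sound---the unique preimage $Q$ of $P$, the normality of $F_Y$ coming from plt-ness of $(Y,F_Y)$ under a cover \'etale in codimension one, and the resulting irreducibility of $F_Y$ near $Q$ do force the class of $K_X|_{F_X}$ to have order exactly $r_P$ in the local class group at $P$---and it buys a self-contained proof at the cost of two technical verifications that the citation hides: that $(\pi|_{F_Y})_*\mathcal{O}_{F_Y}$ decomposes $\mu_{r_P}$-equivariantly into the reflexive sheaves $\mathcal{O}_{F_X}(-iK_X|_{F_X})$ rather than the possibly non-reflexive naive restrictions (this follows from the $S_2$ property of the eigensheaves), and the standard but not completely trivial fact that the index-one cover has a single point over $P$. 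One parenthetical should be dropped: the local class group of a terminal threefold singularity need not be cyclic generated by $K_X$ (e.g.\ at $cA_n$ points); fortunately you only use the existence of the canonical cover, not that claim.
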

\begin{proof}
(1) Recall that by \eqref{eq: K+F>K+F}, 
\[
K_{X'}+F+G=\mu^*(K_X+F_X).
\] 
Since $({F}, G|_{F})$ is klt, by \cite[Theorem~5.50]{K-M}, $(X', F+G)$ is plt in a neighborhood of $F$. Then by \cite[Theorem~5.48]{K-M}, $(X, F_X)$ is plt in a neighborhood of $F_X$. This implies that $(X, F_X)$ is plt as $X$ is terminal. 
In particular, $F_X$ is normal by \cite[Proposition~5.51]{K-M}. The remaining assertions follow from the adjunction formula $(K_X+F_X)|_{F_X}=K_{F_X}$ and \cite[Theorem~5.50]{K-M} as $X$ has only isolated singularities. 

(2) Since $K_X\sim F_X$, all non-Gorenstein singularities of $X$ are contained in $F_X$. Recall that $K_X|_{F_X}$ is a well-defined Weil divisor on $F_X$ since $X$ and $F_X$ both have only isolated singularities and $F_X$ is general. Then $r_P$ equals to the Cartier index of $K_X|_{F_X}$ at $P$ by \cite[Theorem~A.1]{HLS}.
For the last assertion, note that 
$2K_X|_{F_X}\sim (K_X+F_X)|_{F_X}=K_{F_X}$ by the adjunction formula.
So if $K_{F_X}$ is Cartier at $P$, then $r_P=2$. 
\end{proof}

\begin{lem}\label{lem: Z=0}
 Keep the notation in Setting~\ref{set: Special modification}. Then $Z_X=0$ if and only if $\mu^*(Z_X)|_F\equiv 0$.
\end{lem}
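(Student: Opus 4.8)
The forward implication is immediate: if $Z_X=0$ then $\mu^*(Z_X)=0$, so its restriction to $F$ certainly vanishes. All the substance is in the converse, so the plan is to assume $\mu^*(Z_X)|_F\equiv 0$ and deduce $Z_X=0$. First I would record two elementary consequences. Since $X$ is $\bQ$-factorial, $Z_X$ is an effective $\bQ$-Cartier divisor, so $\mu^*(Z_X)\geq 0$; and since $F$ is a general fiber it is not contained in $\Supp\mu^*(Z_X)$, so $\mu^*(Z_X)|_F$ is an \emph{effective} divisor on the surface $F$. Being also numerically trivial, it is the zero divisor, which forces every component of $\mu^*(Z_X)$ to fail to dominate $\Gamma$; that is, $\mu^*(Z_X)$ is $f$-vertical, supported on finitely many fibers of $f$. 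I would also write $\mu^*(F_X)=F+B$ with $B:=\mu^*(F_X)-F\geq 0$ a $\mu$-exceptional divisor (the strict transform of $F_X$ being $F$), so that $\mu^*(K_X)=\mu^*(F_X)+\mu^*(Z_X)=F+\mu^*(Z_X)+B$ and $\mu_*B=0$.

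Next I would fix a general member $H$ of a very ample linear system on $X$ and compute the triple intersection $Z_X\cdot K_X\cdot H=\mu^*(Z_X)\cdot\mu^*(K_X)\cdot\mu^*(H)$. Substituting $\mu^*(K_X)=F+\mu^*(Z_X)+B$, the term involving $F$ vanishes because the vertical divisor $\mu^*(Z_X)$ is disjoint from the general fiber $F$, and the term involving $B$ vanishes by the projection formula since $\mu_*B=0$. This leaves the clean identity $Z_X\cdot K_X\cdot H=(\mu^*(Z_X))^2\cdot\mu^*(H)$.

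The key analytic input is then a Zariski/Hodge-index bound on the right-hand side. Restricting to a general surface $S\in|m\mu^*(H)|$, which carries the induced fibration $S\to\Gamma$, the divisor $\mu^*(Z_X)|_S$ is vertical, so by the classical Zariski lemma (equivalently by Lemma~\ref{lem: HIT} applied on the normalization of $S$) its self-intersection is $\leq 0$, whence $(\mu^*(Z_X))^2\cdot\mu^*(H)\leq 0$. On the other hand $Z_X\cdot K_X\cdot H\geq 0$, since $K_X$ is nef and $Z_X\cdot H$ is an effective $1$-cycle. Therefore $Z_X\cdot K_X\cdot H=0$. To finish, I would use minimality: $K_X$ is nef and big, hence semiample, and defines the crepant morphism $\pi\colon X\to X_{\mathrm{can}}$ to the canonical model with $K_X\equiv\pi^*K_{X_{\mathrm{can}}}$ and $K_{X_{\mathrm{can}}}$ ample; because $X$ is terminal this $\pi$ is small. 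From $0=Z_X\cdot K_X\cdot H=\pi_*(Z_X\cdot H)\cdot K_{X_{\mathrm{can}}}$ and ampleness, the effective $1$-cycle $Z_X\cdot H$ is contracted by $\pi$; letting $H$ vary over the very ample system, every component of $\Supp Z_X$ would be contracted by $\pi$ to dimension $\leq 1$, contradicting smallness of $\pi$ unless $Z_X=0$.

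I expect the main obstacle to be exactly this passage from ``$\mu^*(Z_X)$ is vertical'' to ``$Z_X=0$'': verticality by itself only bounds $(\mu^*(Z_X))^2\cdot\mu^*(H)$ from above, and one must combine this with the nefness of $K_X$ (for the matching lower bound) and with the smallness of $X\to X_{\mathrm{can}}$ (to rule out a $K_X$-trivial divisor) to force vanishing. The delicate points are the bookkeeping in the triple-intersection computation, namely the vanishing of the $F$-term (by verticality) and of the $B$-term (via $\mu_*B=0$), and the correct invocation of the Zariski-type negativity on $X'$, which is only $\bQ$-factorial terminal, so one should cut down to a general surface and invoke Lemma~\ref{lem: HIT} rather than working on $X'$ directly.
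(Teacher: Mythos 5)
Your computation up to the identity $Z_X\cdot K_X\cdot H=(\mu^*(Z_X))^2\cdot\mu^*(H)=0$ is sound (the verticality of $\mu^*(Z_X)$, the vanishing of the $F$-term and of the $B$-term, and the Zariski-lemma bound all check out), but the last step contains a genuine gap: the assertion that the crepant morphism $\pi\colon X\to X_{\mathrm{can}}$ is small \emph{because $X$ is terminal}. Terminality of $X$ constrains divisors exceptional \emph{over} $X$; it says nothing about prime divisors on $X$ that are exceptional over $X_{\mathrm{can}}$. A $\mathbb{Q}$-factorial terminalization of a canonical model with a canonical non-terminal point is a minimal model whose map to the canonical model contracts the crepant divisors, so $\pi$ can perfectly well have divisorial exceptional locus. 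Consequently $Z_X\cdot K_X\cdot H=0$ by itself does not force $Z_X=0$: a component of $Z_X$ that is crepant over $X_{\mathrm{can}}$ satisfies exactly this vanishing. Since the lemma is stated for an arbitrary minimal $3$-fold in Setting~\ref{set: Special modification}, this loophole is real and the argument does not close.

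The repair is short and lands on the paper's own proof. Your bookkeeping already gives $F_X\cdot Z_X\cdot H=\mu^*(Z_X)\cdot(F+B)\cdot\mu^*(H)=0$ (the paper gets this in one line from the projection formula and the hypothesis $\mu^*(Z_X)|_F\equiv 0$, with no need for verticality or Zariski's lemma). Now work on the general very ample surface $H$: the divisors $D_1=F_X|_H$ and $D_2=Z_X|_H$ are effective, $(D_1\cdot D_2)=0$, and $D_1+D_2=K_X|_H$ is nef and big, so Lemma~\ref{lem: HIT}(2) forces $D_1=0$ or $D_2=0$; since $F_X\neq 0$ we get $Z_X|_H=0$ and hence $Z_X=0$. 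This is where the bigness of $K_X$ and the specific decomposition $K_X=F_X+Z_X$ must enter, and it is precisely what rules out the crepant-divisor scenario that the passage through the canonical model cannot.
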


\begin{proof}
We only need to show the ``if" part. 
 Take a general very ample divisor $H$ on $X$, by the projection formula, 
 \[(F_X|_H\cdot Z_X|_H)=(H\cdot Z_X\cdot F_X)=(\mu^*(H)\cdot \mu^*(Z_X)\cdot F)=0.\] On the other hand, $K_X|_H=F_X|_H+Z_X|_H$ is a nef and big divisor on $H$. 
Clearly $F_X|_H\neq 0$,
 so $Z_X|_H=0$ by Lemma~\ref{lem: HIT}(2), which implies that $Z_X=0$.
\end{proof}

 \section{The Noether inequality for $3$-folds}\label{sec: pg5}

Throughout this section, we always assume that $X$ is a minimal $3$-fold of general type with $p_g(X)\geq 2$.
\medskip

\begin{setting}\label{setting: pg=5}
 Let $\pi: W\to X$ be as in \S\ref{sec: b setting} with respect to $|K_X|$ and keep the same notation. We have the following commutative
diagram:
\[\xymatrix@=4em{
W\ar[d]_\pi \ar[dr]^{\gamma} \ar[r]^\psi& \Sigma'\ar[d]^s\\
X \ar@{-->}[r]^{\Phi_{|K_X|}} & \Sigma}
\]
We may write 
\begin{align*}
 K_W {}&=\pi^*(K_X)+E_\pi,\\ 
 \pi^*(K_X){}&=M+Z_\pi, 
\end{align*}
where $|M|=\Mov|\rounddown{\pi^*(K_X)}|$, $E_\pi, Z_\pi$ are effective $\mathbb{Q}$-divisor.  

Let $S$ be a general irreducible element of $|M|$. Then $S$ is a smooth projective surface of general type. Denote by $\sigma_S: S\to S_0$ the contraction to its minimal model. 

After taking some further blow-up, we may further assume that the following hold:
\begin{enumerate}
 \item $\Mov|K_S|$ is base point free; 
 
 \item $\Supp(Z_\pi+E_\pi)$ is of simple normal crossing;

\item $\pi$ factorizes through the modification $\mu: X'\to X$ in Setting~\ref{set: Special modification}. 
\end{enumerate}

\end{setting}
 
We first consider the case when $|K_X|$ is not composed with a pencil of $(1,2)$-surfaces.
\begin{prop}\label{prop: not (1,2)}
 Let $X$ be a minimal $3$-fold of general type with $p_g(X)\geq 5$. Suppose that $|K_X|$ is not composed with a pencil of $(1,2)$-surfaces. Then 
\[K_X^3\ge \frac{1}{4}\roundup{\frac{8}{3}(2p_g(X)-5)}.\]
Moreover, the equality holds only if the canonical image is a surface. 
\end{prop}
\begin{proof}
Keep the notation in Setting~\ref{setting: pg=5}.

 If $\dim\Sigma=3$, then by \cite[Theorem~2.4]{Kob}, we have
 $K_X^3\ge 2p_g(X)-6.$ 
 
 If $\dim\Sigma=1$, since $|K_X|$ is not composed with a pencil of $(1,2)$-surfaces, then by \cite[Theorem~4.4, Theorem~4.5]{Noether}, we have $K_X^3>2p_g(X)-6.$ 

If $\dim\Sigma=2$ and a general fiber $C$ of $\psi$ is of genus at least $3$, then by \cite[Theorem~4.1]{Noether}, we have 
$ K_X^3\ge 2p_g(X)-4$.

 It is easy to see that $2p_g(X)-6$ is strictly larger than the lower bound we want. 

 From now on, suppose that 
 $\dim\Sigma=2$ and a general fiber $C$ of $\psi$ is of genus $2$. 
 Since $\dim\Sigma=2$, we may write
 $S|_S\equiv dC$,
 where $d=\deg(s)\deg(\Sigma)\ge p_g(X)-2$.
 By the adjunction formula, 
 \begin{align*}
 K_S&=(K_{X'}+S)|_S=(\pi^*(K_X)+S+E_{\pi})|_S\\
 &=2S|_S+(E_{\pi}+Z_\pi)|_S\equiv 2dC+(E_{\pi}+Z_\pi)|_S.
 \end{align*}
Note that 
\[p_g(S)\geq h^0(W, K_W)-h^0(W, K_W-S)=p_g(X)-1\geq 3\]
as $S\in \Mov|K_W|$. 
 By \cite[Proposition~2.9]{Noether}, 
$ K_{S_0}^2\ge \frac{8}{3}(2d-1)$. 
Since $K_{S_0}^2$ is an integer, we deduce that \[K_{S_0}^2\ge \roundup{\frac{8}{3}(2p_g(X)-5)}.\] By \cite[Corollary~2.3]{Noether} (take $D=K_X$ and $\lambda=1$), 
 $\pi^*(K_X)|_S- \frac{1}{2}\sigma_S^*(K_{S_0})$
is $\mathbb{Q}$-effective. Hence 
\begin{align*}
 K_X^3\ge (\pi^*(K_X)|_S)^2
 \ge \frac{1}{4}K_{S_0}^2\ge \frac{1}{4} \roundup{\frac{8}{3}(2p_g(X)-5)}. 
\end{align*}
The proof is completed.
\end{proof}

From now on, we consider the case that $|K_X|$ is composed with a pencil of $(1,2)$-surfaces. 
The following is the key proposition of this section, which deals with the most difficult case. The first part gives a new proof of \cite[Theorem~4.7]{Noether} with a much better lower bound.

\begin{prop}\label{prop: (1,2) pencil}
 Let $X$ be a minimal $3$-fold of general type with $p_g(X)\geq 3$. Suppose that $|K_X|$ is composed with a pencil of $(1,2)$-surfaces. 

 \begin{enumerate}
 \item If there is a minimal $3$-fold $X_0$, birational to $X$, such that $\text{\rm Mov}|K_{X_0}|$ is base point free, then 
\[K_X^3\ge \frac{1}{2}\rounddown{\frac{27p_g(X)-38}{10}}.\]
 \item If there is no such $X_0$ as in (1), then
\[
 K_X^3\ge \frac{m_1-2}{m_1}\left(\frac{p_g(X)-1}{p_g(X)}+\frac{m_1-2}{2}\right)+\frac{(p_g(X)-1)^2}{p_g(X)}.
\] 
where $m_1=\rounddown{\frac{7p_g(X)-1}{10}}$.
 
 \end{enumerate}

\end{prop}

\begin{rem}\label{rem proof of noether}
 Here we briefly explain our strategy. In order to get a good estimate of $K_X^3$ in this case, we need to get a good estimate of $(\pi^*(K_X)|_S)^2$ and also need to have a good comparison of $K_X^3$ 
 with $(\pi^*(K_X)|_S)^2$. To this end, we consider the movable linear system \[|M_{m, -n}|:= \Mov|\rounddown{\pi^*(mK_X)-nS}|\]
 for various $m,n$. 
First, we consider $n=0$ and $m$ as large as possible such that we have a decomposition
\[ \pi^*(mK_X)|_S \geq H+(m-2)C
\]
where $|H|\neq \Mov|K_S|$ is a non-trivial movable linear system on $S$ and $C\in \Mov|K_S|$. Such a decomposition will give us a good estimate of $(\pi^*(K_X)|_S)^2$; second, we consider $m=2$ and $n$ as large as possible 
 such that we have a decomposition
\[ \pi^*(2K_X) \geq nS+M_{2, -n}
\]
and $|M_{2, -n}|_S|\neq \Mov|K_S|$ is a non-trivial movable linear system on $S$. Such a decomposition will give us a good comparison of $K_X^3$ 
 with $(\pi^*(K_X)|_S)^2$. Combining these two decompositions, we get a good estimate of $K_X^3$ eventually.
\end{rem}

{We recall the following theorem which will play a pivotal role in estimating the lower bound of $K_X^3$:
\begin{thm}[{see Catanese--Chen--Zhang \cite[Theorem~3.2]{CCZ}}] Let $S$ be a smooth projective $(1,2)$-surface and $\sigma_S:S\rightarrow S_0$ the contraction onto its minimal model. Assume that $D\in |\sigma_S^*(K_{S_0})|$ is an effective divisor with simple normal crossing support. Then 
$$h^0(S, K_S+\roundup{\varepsilon D})\geq 3$$
holds for any rational number $\varepsilon>\frac{3}{10}$. 
\end{thm}}

\begin{proof}[Proof of Proposition~\ref{prop: (1,2) pencil}]

Keep the notation in Setting~\ref{setting: pg=5}. We simply write $p_g=p_g(X)$. 
 We may write
\begin{align*}
 \pi^*(K_X)\equiv d S+Z_\pi,
\end{align*}
where $d\geq p_g-1$. 

\medskip

{\bf Step 1. } We make some technical preparations.

Take integers $m\geq 2$ and $n\geq 0$ such that 
\begin{align}
 m-1-\frac{n+1}{p_g-1}>0. \label{eq condition m-1->0}
\end{align} 
Note that 
\begin{align*}
 {}&(m-1)\pi^*(K_X)-(n+1)S-\frac{n+1}{p_g-1}Z_\pi\\
 \equiv {}& \left(m-1-\frac{n+1}{p_g-1}\right)\pi^*(K_X)+\frac{(n+1)(d-p_g+1)}{p_g-1}S
\end{align*}
is nef and big. 
By the Kawamata--Viehweg vanishing theorem,
\[
H^1(W, K_W+\roundup{(m-1)\pi^*(K_X)-(n+1)S-\frac{n+1}{p_g-1}Z_\pi})=0,
\]
which implies that the natural restriction map
\begin{align*}
 {}& H^0(W, K_W+\roundup{(m-1)\pi^*(K_X)-nS-\frac{n+1}{p_g-1}Z_\pi})\\
 \to {}&H^0(S, K_{S}+\roundup{(m-1)\pi^*(K_X)|_{S}-\frac{n+1}{p_g-1}Z_\pi|_{S}}) 
\end{align*}
is surjective. Here in the last term, the restriction on $S$ commutes with the roundup as $S$ is general, and we use the fact that $S|_S\sim 0$. Therefore, 
\begin{align}
 |mK_W-nS||_S\lsgeq |K_S+\roundup{L_{m,n}}+(m-2)\sigma_S^*(K_{S_0})|,\label{eq: K>K+L+K}
\end{align}
 where 
\[L_{m,n}=(m-1)\pi^*(K_X)|_{S}-\frac{n+1}{p_g-1}Z_\pi|_{S}-(m-2)\sigma_S^*(K_{S_0}).\]

 Suppose that 
\begin{align}
 L_{m,n}\geq t\sigma_S^*(K_{S_0}) \text{ for some } t>\frac{3}{10}. \label{eq condition L>3/10}
\end{align}
Here the choice of $\frac{3}{10}$ is to use \cite[Theorem~3.2]{CCZ} where it appears as a key constant. 
Recall that $S$ is a $(1,2)$-surface by assumption, so by \cite[Theorem~3.2]{CCZ}, $h^0(S, K_S+\roundup{L_{m,n}})\geq 3$. 
Denote $|H_{m,n}|=\Mov|K_S+\roundup{L_{m,n}}|$, then by comparing the movable parts of both sides of \eqref{eq: K>K+L+K}, we have
\begin{align}
M_{m, -n}|_S\geq H_{m,n}+(m-2)C, \label{eq: K>H+K} 
\end{align}
where $|M_{m, -n}|= \Mov|\rounddown{\pi^*(mK_X)-nS}|$ and $C\in \Mov|K_S|= \Mov|\sigma_S^*(K_{S_0})|$ is a general element. Here we recall that 
\[\Mov|\rounddown{\pi^*(mK_X)-nS}|=\Mov| mK_W-nS|\] by the isomorphism $H^0(W, mK_W)\simeq H^0(W, \rounddown{\pi^*(mK_X)})$.

Also recall that by \cite[\S3.7]{Chen07}, \begin{align}
  (\pi^*(K_X)\cdot C)=\xi \geq 1.\label{eq: 9.3}
\end{align}
We claim that 
\begin{align}
 (\pi^*(K_X)|_S\cdot H_{m, n}) \geq 
 \frac{2p_g-2}{p_g}, \label{eq KH>2p-2/p}
\end{align}
and
\begin{align}
 (\pi^*(K_X)|_S\cdot H_{m, n}) \geq 2 \text{ if } \pi^*(K_X)|_S-C \text{ is } \mathbb{Q} \text{-effective}. \label{eq KH>2}
\end{align}
If $|H_{m,n}|$ is composed with a pencil, then it is composed with the same pencil as $|K_S|$ and hence $H_{m,n}\geq 2C$ as $h^0(S, H_{m,n})\geq 3$. 
Then by \eqref{eq: 9.3},
$$(\pi^*(K_X)|_S\cdot H_{m, n})\geq (\pi^*(K_X)|_S\cdot 2C)\geq 2.$$ 
If $|H_{m,n}|$ is not composed with a pencil, then 
$(H_{m, n}\cdot C)\geq 2$ as $H_{m,n}|_C$ is movable and $g(C)\geq 2$. 
On the other hand, by Lemma~\ref{lem: 2K>K}, $\pi^*(K_X)|_S-\frac{p_g-1}{p_g}C$ is $\mathbb{Q}$-effective. 
Hence 
\begin{align*}
 (\pi^*(K_X)|_S\cdot H_{m, n})\geq \frac{p_g-1}{p_g}(C\cdot H_{m, n})\geq \frac{2p_g-2}{p_g}.  
\end{align*}
Moreover, if $\pi^*(K_X)|_S- C$ is $\mathbb{Q}$-effective, then the last inequality gives $(\pi^*(K_X)|_S\cdot H_{m, n})\geq 2$.

We remind that to make use of all results in Step 1, it suffices to check conditions \eqref{eq condition m-1->0} and \eqref{eq condition L>3/10}.

\medskip

{\bf Step 2.} We treat the case in Assertion (1). 

Take $n_0=\rounddown{\frac{7p_g-18}{10}}$, then $n_0$ satisfies
\begin{align}\label{eq n0}
 1-\frac{n_0+1}{p_g-1} > \frac{3}{10}.
\end{align}
Here \eqref{eq condition m-1->0} is satisfied for $(m,n)=(2, n_0)$.
In this case, after replacing $X$ by $X_0$, we may assume that $\text{\rm Mov}|K_X|$ is base point free, in other words, $\Phi_{|K_X|}: X\to \Sigma$ is a morphism, and it induces a natural fibration $\phi: X\to \Sigma'$ by the Stein factorization. As $X$ is minimal, the fiber of $\phi$ is exactly $S_0$, so we have $\pi^*(K_X)|_{S}=\sigma_S^*(K_{S_0})$. Hence
\begin{align*}
 L_{2,n_0} \geq \left(1-\frac{n_0+1}{p_g-1}\right) \pi^*(K_X)|_{S} 
 = \left(1-\frac{n_0+1}{p_g-1}\right) \sigma_S^*(K_{S_0}).
\end{align*}
In the last term, the coefficient of $\sigma_S^*(K_{S_0})$ is strictly larger than $\frac{3}{10}$ by \eqref{eq mn}, so \eqref{eq condition L>3/10} is satisfied. So by \eqref{eq: K>H+K}, $M_{2,-n_0}|_S\geq H_{2, n_0}$.
As $\pi^*(K_X)|_{S}=\sigma_S^*(K_{S_0})\geq C$, we have $(\pi^*(K_X)|_S\cdot H_{2, n_0})\geq 2$ by \eqref{eq KH>2}.
By definition, $\pi^*(2K_X)\geq n_0S+M_{2,-n_0}$.
So
\begin{align*}
 2K_X^3\geq {}& (\pi^*(K_X)^2\cdot (n_0S+M_{2,-n_0})) \\
 = {}& n_0(\sigma_S^*(K_{S_0}))^2+(\pi^*(K_X)^2\cdot M_{2,-n_0}) \\
 \geq {}& n_0 + (p_g-1)(S\cdot \pi^*(K_X) \cdot M_{2,-n_0})\\
 \geq {}&n_0 +(p_g-1)(\pi^*(K_X)|_S \cdot H_{2,n_0})\\
 \geq {}&n_0+2(p_g-1)=\rounddown{\frac{27p_g-38}{10}}.
\end{align*}

\medskip

{\bf Step 3.} We treat the case in Assertion (2). 

In this case, $|K_X|$ is composed with a rational pencil by \cite[Corollary~3.3(1)]{Noether}.

Take $m\geq 2$ and $n\geq 0$ satisfying
\begin{align}\label{eq mn}
 \left(m-1-\frac{n+1}{p_g-1}\right)\cdot \frac{p_g-1}{p_g}>m-2+\frac{3}{10},
\end{align}
or equivalently, 
\[
m+n<\frac{7p_g}{10}.
\]
Here \eqref{eq condition m-1->0} is satisfied. 
Recall that by our construction, $\pi$ factorizes through $\mu$ in Setting~\ref{set: Special modification}. Hence by Lemma~\ref{lem: 2K>K}, 
\begin{align*}
 L_{m,n} \geq {}& \left(m-1-\frac{n+1}{p_g-1}\right) \pi^*(K_X)|_{S}-(m-2)\sigma_S^*(K_{S_0})\\
 \geq {}& \left(m-1-\frac{n+1}{p_g-1}\right)\frac{p_g-1}{p_g}\sigma_S^*(K_{S_0})-(m-2)\sigma_S^*(K_{S_0})\\
 ={}&\left(\left(m-1-\frac{n+1}{p_g-1}\right)\frac{p_g-1}{p_g}-(m-2)\right)\sigma_S^*(K_{S_0}). 
\end{align*}
In the last term, the coefficient of $\sigma_S^*(K_{S_0})$ is strictly larger than $\frac{3}{10}$ by \eqref{eq mn}, so \eqref{eq condition L>3/10} is satisfied.

Then we consider special values of $(m,n)$. Take $m_1=\rounddown{\frac{7p_g-1}{10}}$ and $n_1=\rounddown{\frac{7p_g-21}{10}}=m_1-2$.

Note that $(m, n)=(m_1, 0)$ satisfies \eqref{eq mn}. 
Then by \eqref{eq: K>H+K}, 
\[\pi^*(m_1K_X)|_S\geq M_{m_1, 0}|_S\geq H_{m_1,0}+(m_1-2)C.\]
By
\eqref{eq: 9.3} and \eqref{eq KH>2p-2/p},
\begin{align}
m_1(\pi^*(K_X)|_S)^2\geq{}& (\pi^*(K_X)|_S\cdot H_{m_1, 0})+(m_1-2)(\pi^*(K_X)|_S\cdot C)\notag\\
\geq{}&\frac{2p_g-2}{p_g}+m_1-2.\label{eq L2>=}
\end{align}

On the other hand, $(m,n)=(2,n_1)$ satisfies \eqref{eq mn}.
So by \eqref{eq: K>H+K}, $M_{2,-n_1}|_S\geq H_{2, n_1}$.
By definition, $\pi^*(2K_X)\geq n_1S+M_{2,-n_1}$.
So
\begin{align*}
 2K_X^3\geq {}& (\pi^*(K_X)^2\cdot (n_1S+M_{2,-n_1})) \\
 = {}& n_1(\pi^*(K_X)|_S)^2+(\pi^*(K_X)^2\cdot M_{2,-n_1}) \\
 \geq {}& n_1(\pi^*(K_X)|_S)^2+ (p_g-1)(S\cdot \pi^*(K_X) \cdot M_{2,-n_1})\\
 \geq {}&n_1(\pi^*(K_X)|_S)^2+(p_g-1)(\pi^*(K_X)|_S \cdot H_{2,n_1})\\
 \geq {}&\frac{n_1}{m_1}\left(\frac{2p_g-2}{p_g}+m_1-2\right)+\frac{2(p_g-1)^2}{p_g}.
\end{align*}
Here for the last step we use \eqref{eq KH>2p-2/p} and \eqref{eq L2>=}.
The proof is completed. 
 \end{proof}
By Proposition~\ref{prop: (1,2) pencil}, we have the following theorem.
 \begin{thm} \label{thm: noether 12 slop} Let $X$ be a minimal $3$-fold of general type with $p_g(X)\geq 3$. Suppose that $|K_X|$ is composed with a pencil of $(1,2)$-surfaces, then 
\[K_X^3\ge \frac{1}{2}\rounddown{\frac{27p_g(X)-38}{10}},\]
with the following exceptional cases:
\renewcommand{\arraystretch}{1.5}
\[\begin{tabular}{|r|c|c|c|c|c|c|c|c|c|}
		\hline
		 	$p_g(X)=$ & $3$ & $4$& $5$ & $6$ & $7$& $8$& $9$ & $10$\\
 \hline
 $K_X^3\geq $ & $\frac{4}{3}$ & $\frac{9}{4}$ & $\frac{109}{30}$& $\frac{61}{12}$ & $\frac{85}{14}$& $\frac{151}{20}$ &$\frac{244}{27}$ & $\frac{301}{30}$\\
		\hline
	\end{tabular}\]
\end{thm}

\begin{proof} If we are in the situation of Proposition~\ref{prop: (1,2) pencil}(1), then the inequality is proved. 
 If we are in the situation of Proposition~\ref{prop: (1,2) pencil}(2), then $p_g(X)\leq 10$ by \cite[Corollary~3]{Noether_Add} and hence the inequality can be checked case by case directly.
\end{proof}

By Propositions~\ref{prop: not (1,2)} 
and~\ref{prop: (1,2) pencil}, we have the following theorem.

\begin{thm}\label{thm: volume pg=5}
 Let $X$ be a minimal $3$-fold of general type with $p_g(X)\geq 5$.  Then 
\[K_X^3\ge \frac{1}{4}\roundup{\frac{8}{3}(2p_g(X)-5)}.\]
In particular, when $5\leq p_g(X)\leq 10$, the lower bound of $K_X^3$ is given by the following table: 
\renewcommand{\arraystretch}{1.5}\[\begin{tabular}{|r|c|c|c|c|c|c|c|}
		\hline
		 	$p_g(X)=$ & $5$ & $6$ & $7$& $8$& $9$ & $10$\\
 \hline
 $K_X^3\geq $ & $\frac{7}{2}$& $\frac{19}{4}$ & $6$& $\frac{15}{2}$ &$\frac{35}{4}$ & $10$\\ 
		\hline
	\end{tabular}\]
\end{thm}

\begin{proof}
By {\cite[Theorem~1.4, Theorem~1.5]{Chen07}}, we may assume that $p_g(X)\geq 5$.

Suppose that $|K_X|$ is composed with a pencil of $(1,2)$-surfaces, then by Theorem~\ref{thm: noether 12 slop}, 
$K_X^3>\frac{1}{4}\roundup{\frac{8}{3}(2p_g(X)-5)}$. In fact,  
it is easy to check that $K_X^3\ge \frac{1}{2}\rounddown{\frac{27p_g-38}{10}}>\frac{1}{4}\roundup{\frac{8}{3}(2p_g(X)-5)}$ when $p_g(X)\geq 11$ as the slope is larger;  
 the exceptional cases can be checked case by case directly.

Suppose that $|K_X|$ is not composed with a pencil of $(1,2)$-surfaces, then we get the conclusion by Proposition~\ref{prop: not (1,2)}.
\end{proof}
\begin{proof}[Proof of Theorem~\ref{thm: pg 5}]
It follows directly from Theorem~\ref{thm: volume pg=5}.
\end{proof}
\begin{rem}
\begin{enumerate}
 \item When $p_g(X)=5$, the lower bound in Theorem~\ref{thm: volume pg=5} is sharp by {the example} \cite[Table~10, No.~2]{CJL} and this example justifies the optimality of Proposition~\ref{prop: not (1,2)}; when $p_g(X)=7$ or $10$, the lower bounds in Theorem~\ref{thm: volume pg=5} are sharp by examples in \cite[Proposition~3.2]{Kob}; when $p_g(X)=8$, the lower bound in Theorem~\ref{thm: volume pg=5} is sharp by an example constructed in the same way as \cite[Proposition~4.15(2)]{HZ}. 

 \item More surprisingly, the lower bounds $\frac{9}{4}, \frac{109}{30}, \frac{85}{14}, \frac{151}{20}, \frac{301}{30}$ in Theorem~\ref{thm: noether 12 slop} 
 are sharp when $p_g(X)=4, 5,7,8,10$.
Such values are attained by examples \cite[Table~10, No.~3, No.~8--10]{CJL} and \cite[Section~6, Table~1]{CP23}. When $p_g(X)=6$, the lower bound $\frac{61}{12}$ in Theorem \ref{thm: noether 12 slop} is also optimal by an example constructed by S. Coughlan, Y. Hu, R. Pignatelli and T. Zhang in a paper in preparation. 

 \item By \cite[Proposition~4.5]{HZ}, the lower bound of Theorem~\ref{thm: volume pg=5} can be improved to $9$ when $p_g(X)=9$; and the lower bound can be to $5$ when $p_g(X)=6$ by the same method, but we will not write down the details here. 
\end{enumerate}
 
\end{rem}
We would like to raise the following question.
\begin{question}
 Is the lower bound $\frac{244}{27}$ optimal {among all those} $3$-folds $X$ canonically fibered by $(1,2)$-surfaces with $p_g(X)=9$?
\end{question}

\section{Birational geometry of $3$-folds with $p_g=2$ and $\Vol=1/3$}\label{sec: pg2}

In this section, we study the birational geometry of a minimal $3$-fold $X$ of general type with $p_g(X)=2$ and $K_X^3=\frac{1}{3}.$ 

We briefly explain our strategy.
The final goal is to determine all the plurigenera and the behavior of pluricanonical systems of $X$. We have well-developed methods to get the latter from the former (see for example \cite{EXPIII, CHP21}). In order to get all the plurigenera, according to Reid's Riemann--Roch formula, we need to know the singularities of $X$ explicitly. The local information of singularities can be read out by the strong global restriction that 
$p_g(X)=2$ and $K_X^3=\frac{1}{3}$ by the following: first, we study the geometric structure of $X$ in detail and control the Gorenstein indices of singularities of $X$ in \S\ref{sec: 4.1}; then we prove inequalities of $P_2(X)$ and $P_3(X)$ in \S\ref{sec 4.2}; combining these with Reid's Riemann--Roch formula will lead us to the clear description of singularities of $X$.

\subsection{Geometric structure and non-Gorenstein singularities}\label{sec: 4.1}\

Since $p_g(X)=2$, $|K_X|$ is composed with a pencil. Take a birational modification $\mu: X'\to X$ as in Setting~\ref{set: Special modification} and keep the same notation. Since $h^1(\mathcal{O}_X)=0$ by \cite[Theorem~1.5(3)]{Chen07}, we have $\Gamma\simeq \mathbb{P}^1$, in other words, $|K_X|$ is composed with a rational pencil. By \cite[Theorem~1.4(2)]{Chen07}, $F$ is a $(1,2)$-surface. 

We recall the following lemma from \cite[\S2.15]{Chen07}.
\begin{lem}\label{lem: canonical deg} 
Let $X$ be a minimal $3$-fold of general type with $p_g(X)=2$ and $K_X^3=\frac{1}{3}$. Keep the notation in Setting~\ref{set: Special modification}. Then 
 \[
 (\mu^*(K_X)|_F)^2=\frac{1}{3},\quad (\mu^*(K_X)|_{F}\cdot \sigma^*(K_{F_0}))=(\mu^*(K_X)|_F\cdot \Mov|\sigma^*(K_{F_0})|)=\frac{2}{3}.
 \]
\end{lem}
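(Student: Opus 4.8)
The plan is to reduce all three equalities to a single lower bound and then read off the exact values by playing the resulting elementary inequalities against the hypothesis $K_X^3=\tfrac13$. Set $\xi=\mu^*(K_X)|_F$, a nef $\mathbb{Q}$-divisor on $F$, and $C=\sigma^*(K_{F_0})$, which is nef and big with $C^2=K_{F_0}^2=1$ because $F$ is a $(1,2)$-surface. Decompose $C=M+Z_C$ into movable and fixed parts, where $M=\Mov|\sigma^*(K_{F_0})|$ is the (base point free) pullback of the genus-$2$ canonical pencil of $F_0$; the standard geometry of $(1,2)$-surfaces gives $M^2=0$ and, by the projection formula (since $\sigma_*M=K_{F_0}$), $C\cdot M=K_{F_0}^2=1$, hence $Z_C\cdot M=1$ and $Z_C\cdot C=0$. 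First I would record the upper bound $\xi^2\le\tfrac13$: writing $\mu^*(K_X)=F+Z$ with $Z\ge 0$ and using nefness of $\mu^*(K_X)$, one has $\tfrac13=(\mu^*K_X)^3=(\mu^*K_X)^2\cdot F+(\mu^*K_X)^2\cdot Z\ge(\mu^*K_X)^2\cdot F=\xi^2$.

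Next I would assemble the elementary consequences of Lemma~\ref{lem: 2K>K}. For $p_g(X)=2$ that lemma makes $2\xi-C$ a $\mathbb{Q}$-effective divisor on $F$; intersecting with the nef divisor $\xi$ gives $2\xi^2\ge\xi\cdot C$, and nefness of $\xi$ together with $Z_C\ge 0$ gives $\xi\cdot C=\xi\cdot M+\xi\cdot Z_C\ge\xi\cdot M$. The crucial remark is that the single inequality $\xi\cdot M\ge\tfrac23$ already forces everything: it yields $\xi\cdot C\ge\tfrac23$, hence $2\xi^2\ge\tfrac23$, i.e. $\xi^2\ge\tfrac13$; combined with $\xi^2\le\tfrac13$ this gives $\xi^2=\tfrac13$, so $2\xi^2=\tfrac23\ge\xi\cdot C\ge\tfrac23$ forces $\xi\cdot C=\tfrac23$, and then $\xi\cdot M+\xi\cdot Z_C=\tfrac23$ with $\xi\cdot M\ge\tfrac23$ and $\xi\cdot Z_C\ge 0$ forces $\xi\cdot M=\tfrac23$ and $\xi\cdot Z_C=0$. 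This is exactly the assertion $(\mu^*(K_X)|_F)^2=\tfrac13$ together with $\mu^*(K_X)|_F\cdot\sigma^*(K_{F_0})=\mu^*(K_X)|_F\cdot\Mov|\sigma^*(K_{F_0})|=\tfrac23$.

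It then remains to prove the single inequality $\xi\cdot\Mov|\sigma^*(K_{F_0})|\ge\tfrac23$, equivalently $\deg(\mu^*(K_X)|_{C'})\ge\tfrac23$ for a general member $C'$ of the genus-$2$ pencil $|M|$ on $F$; this is the main obstacle. The naive estimate is too weak: from $2\xi\ge C$ and nefness of $M$ one only gets $2(\xi\cdot M)\ge C\cdot M=1$, i.e. $\xi\cdot M\ge\tfrac12$. Closing the gap from $\tfrac12$ to $\tfrac23$ is exactly the equality content of the bound $\Vol\ge\tfrac13$ proved in \cite{Chen07}: my approach would be to revisit the chain of inequalities in that proof — a successive restriction of the pluricanonical systems first to the fibre $F$ and then to the genus-$2$ curve $C'$, combined with a degree estimate on $C'$ — and to observe that the hypothesis $K_X^3=\tfrac13$ forces every inequality in that chain to be an equality, thereby pinning the canonical degree on $C'$ at $\tfrac23$. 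The sharp degree estimate on the genus-$2$ fibre is governed by the non-Gorenstein (index-$3$) singularities of $X$, which is what produces the denominator $3$, and making it precise requires the detailed singularity analysis carried out in \cite[\S2.15]{Chen07}; this is the delicate step, whereas the passage from that one estimate to all three equalities is, as shown above, purely formal.
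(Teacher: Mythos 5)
Your proposal is correct and follows essentially the same route as the paper: both arguments hinge on the lower bound $(\mu^*(K_X)|_F\cdot \Mov|\sigma^*(K_{F_0})|)\geq \frac{2}{3}$ imported from \cite[\S2.15]{Chen07}, combine it with Lemma~\ref{lem: 2K>K} to get the chain $\frac{1}{3}=K_X^3\geq (\mu^*(K_X)|_F)^2\geq \frac{1}{2}(\mu^*(K_X)|_F\cdot\sigma^*(K_{F_0}))\geq \frac{1}{3}$, and force all inequalities to be equalities. The only inaccuracy is your closing speculation that the $\frac{2}{3}$ in \cite{Chen07} is produced by the index-$3$ singularities of $X$ (it actually comes from a vanishing-theorem restriction to the genus-$2$ curve, and the index-$3$ point is a consequence of equality rather than its cause), but since you defer that step to the reference exactly as the paper does, this does not affect the proof.
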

\begin{proof}
 For the convenience of readers who are not familiar with \cite{Chen07}, we briefly recall the proof. 
Denote by $\xi=(\mu^*(K_X)|_F\cdot \Mov|\sigma^*(K_{F_0})|)$ (which coincides with the definition in \cite[\S2.9]{Chen07}). Then by the first $3$ lines of \cite[\S2.15]{Chen07}, one has $\xi\geq\frac{2}{3}$.
By Lemma~\ref{lem: 2K>K}, $2\mu^*(K_X)|_{F}-\sigma^*(K_{F_0})$ is effective. Then 
\begin{align*}
 K_X^3={}&\mu^*(K_X)^3\geq (\mu^*(K_X)^2\cdot F)=(\mu^*(K_X)|_F)^2\notag \\\geq {}&\frac{1}{2}(\mu^*(K_X)|_F\cdot \sigma^*(K_{F_0}))\geq \frac{\xi}{2}\geq \frac{1}{3}. 
\end{align*}
Then by assumption, all the above inequalities are equalities. 
\end{proof}

Now we will analyze the geometry of $X$ in more detail in the next $2$ lemmas.

\begin{lem}\label{lem: excep1}
Let $X$ be a minimal $3$-fold of general type with $p_g(X)=2$ and $K_X^3=\frac{1}{3}$. Keep the notation in Setting~\ref{set: Special modification}.
Then there exists a unique $\mu$-exceptional prime divisor $E_0$ such that $(\sigma^*(K_{F_0})\cdot E_0|_{F})>0$. Moreover, 
 \begin{enumerate}
 \item $(\sigma^*(K_{F_0})\cdot E_0|_{F})=1$;
 \item $\coeff_{E_0}E_\mu=\frac{1}{3}$, $\coeff_{E_0}Z=\frac{2}{3}$;
 \item $\mu(E_0)$ is a point.
 \end{enumerate}
\end{lem}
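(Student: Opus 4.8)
The plan is to transport the whole problem onto the general fiber $F$ by adjunction, and then read off all three assertions from intersection numbers against $\kappa:=\sigma^*(K_{F_0})$. First I would record two expressions for $K_F$. Since $F$ is a general fiber of $f\colon X'\to\Gamma$ with $\Gamma\simeq\mathbb{P}^1$, two general fibers are disjoint, so $F|_F\equiv 0$; hence adjunction on $X'$ combined with $K_{X'}=\mu^*(K_X)+E_\mu$ gives, numerically on $F$,
\[
K_F\equiv \mu^*(K_X)|_F+E_\mu|_F=:L+E_\mu|_F,
\]
while the contraction $\sigma\colon F\to F_0$ to the minimal model yields $K_F=\kappa+\mathcal{E}$ with $\mathcal{E}\geq 0$ supported on the $\sigma$-exceptional curves and $\mathcal{E}\cdot\kappa=0$. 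I would also note that a $\mu$-exceptional prime divisor contained in fibers of $f$ is disjoint from a general $F$, so only the horizontal components $E_i$ of $E_\mu$ contribute to $E_\mu|_F$, and that $\kappa=\sigma^*(K_{F_0})$ is an \emph{integral} nef divisor (as $F_0$ is smooth minimal) while each $E_i|_F$ is an effective integral divisor.

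Next I would intersect with $\kappa$. Using $\kappa^2=K_{F_0}^2=1$, $\mathcal{E}\cdot\kappa=0$ and the value $L\cdot\kappa=\tfrac{2}{3}$ from Lemma~\ref{lem: canonical deg}, the two expressions for $K_F$ give $K_F\cdot\kappa=1$, and therefore
\[
E_\mu|_F\cdot\kappa=\tfrac13,\qquad Z|_F\cdot\kappa=L\cdot\kappa=\tfrac23,
\]
the second equality because $F|_F\equiv 0$ forces $Z|_F=L$; in particular $(E_\mu+Z)|_F\cdot\kappa=1$. Writing $E_\mu=\sum_i a_iE_i$ with $a_i>0$ and setting $m_i:=(\kappa\cdot E_i|_F)\in\bZ_{\geq 0}$, the relation $\sum_i a_im_i=\tfrac13>0$ immediately produces at least one $\mu$-exceptional $E_0$ with $m_0>0$, giving the existence part.

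The crux is the uniqueness together with the exact values $m_0=1$, $\coeff_{E_0}E_\mu=\tfrac13$ and $\coeff_{E_0}Z=\tfrac23$. Here I would exploit the rigidity forced by the chain of equalities in Lemma~\ref{lem: canonical deg}: the effective divisor $N:=2L-\kappa$ from Lemma~\ref{lem: 2K>K} satisfies $N\cdot L=0$ and $N^2=-\tfrac13$, so by the Hodge index theorem its support consists of $L$-trivial curves, and the genus-$2$ pencil $\Mov|\kappa|$ on the $(1,2)$-surface $F_0$ has a single canonical base point, leaving room for essentially one curve meeting $\kappa$. Combined with integrality $m_i\in\bZ_{\geq0}$ and $\sum_i a_im_i=\tfrac13<1$, this should force a \emph{unique} prime divisor among the horizontal components of $E_\mu+Z$ whose restriction meets $\kappa$, namely $E_0$, with $m_0=1$; the concentration $(E_\mu+Z)|_F\cdot\kappa=1$ then yields $\coeff_{E_0}(E_\mu+Z)=1$, and since the $E_\mu$-part contributes $\tfrac13$ and the non-exceptional part of $Z$ is $\kappa$-orthogonal, one gets $\coeff_{E_0}E_\mu=\tfrac13$ and $\coeff_{E_0}Z=\tfrac23$. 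I expect this step to be the main obstacle: the bare numerical identity $\sum_i a_im_i=\tfrac13$ does not by itself exclude several small-coefficient contributions, so converting the integrality and Hodge-index data into genuine concentration on one divisor—and verifying that the non-exceptional part of $Z$ contributes nothing against $\kappa$—is where the real work lies.

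Finally, for assertion (3) I would argue that $\mu(E_0)$ cannot be a curve. If it were, then for general $F$ the curve $E_0|_F$ would be contracted by $\mu|_F\colon F\to F_X$, and since $L=\mu^*(K_X)|_F$ is a $\mu|_F$-pullback this would force $L\cdot E_0|_F=0$. I would rule this out by showing $L\cdot E_0|_F>0$, using $\kappa\cdot E_0|_F=1$ together with $\kappa=2L-N$ and the location of $E_0|_F$ relative to the $L$-trivial support of $N$; then $E_0|_F$ is not $\mu|_F$-exceptional, so the $\mu$-exceptional divisor $E_0$ must contract to a point.
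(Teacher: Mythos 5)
Your setup (adjunction on $F$, the computation $(E_\mu|_F\cdot\sigma^*(K_{F_0}))=\frac{1}{3}$ from $(K_F\cdot\sigma^*(K_{F_0}))=1$ and Lemma~\ref{lem: canonical deg}, and the existence of some $E_0$ with $(E_0|_F\cdot\sigma^*(K_{F_0}))>0$) agrees with the paper. But the step you yourself flag as ``where the real work lies'' is a genuine gap, and the route you sketch for closing it is not the one that works. The identity $\sum_i a_im_i=\frac13$ with $m_i\in\mathbb{Z}_{\geq 0}$ indeed does not exclude several contributing components, and neither the Hodge-index analysis of $N=2L-\sigma^*(K_{F_0})$ nor the base point of the genus-$2$ pencil on $F_0$ supplies the missing concentration. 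The input you are missing is the integrality of the \emph{canonical divisor of $X'$}: the effective divisor $F+Z+E_\mu$ is an integral canonical divisor whose support contains every $\mu$-exceptional prime divisor, so $\coeff_{E_0}(Z+E_\mu)\geq 1$ and $K_{X'}-E_0$ is effective. Since $(E_0|_F\cdot\sigma^*(K_{F_0}))\geq 1$ by integrality while $(K_F\cdot\sigma^*(K_{F_0}))=1$, one gets $((K_{X'}-E_0)|_F\cdot\sigma^*(K_{F_0}))\leq 0$, and nefness of $\sigma^*(K_{F_0})$ forces this to be $0$ and $(E_0|_F\cdot\sigma^*(K_{F_0}))=1$. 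Expanding $(Z+E_\mu-E_0)|_F$ as a nonnegative combination and intersecting with the nef divisor $\sigma^*(K_{F_0})$ then gives, term by term, $\coeff_{E_0}(Z+E_\mu)=1$, the uniqueness of $E_0$, and (via $(E_\mu|_F\cdot\sigma^*(K_{F_0}))=\frac13$) the values $\coeff_{E_0}E_\mu=\frac13$, $\coeff_{E_0}Z=\frac23$. This is a short pigeonhole-free argument that your proposal does not contain and that I do not see how to recover from your Hodge-index considerations alone.

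Your plan for assertion (3) would also fail: you propose to show $(L\cdot E_0|_F)>0$ with $L=\mu^*(K_X)|_F$, but this intersection number is actually $0$ (indeed, the paper's Lemma~\ref{lem: excep2} shows $E_0|_F=C_0$ is contracted by $\mu$, and this vanishing is used there in deriving $(K_F\cdot C_0)=-\frac13 C_0^2$); moreover, proving $(L\cdot E_0|_F)>0$ would equally rule out $\mu(E_0)$ being a point, which is absurd for an exceptional divisor. The correct argument is much softer: once $\coeff_{E_0}E_\mu=\frac13\notin\mathbb{Z}$ is known, $\mu(E_0)$ cannot be a curve, because a terminal $3$-fold has only isolated singularities, so $X$ is smooth at the generic point of any curve and every divisor over it would have integral discrepancy there.
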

\begin{proof}
 By the adjunction formula, we have
 \[
 K_{F}=\mu^*(K_X)|_{F}+E_{\mu}|_{F}.
 \]
 By Lemma~\ref{lem: canonical deg} 
 and the equality $(K_{F}\cdot\sigma^*(K_{F_0}))=K_{F_0}^2=1$, we have 
 \begin{align}
 (E_{\mu}|_{F}\cdot\sigma^*(K_{F_0}))=\frac{1}{3}. \label{eq: Ealpha.K=1/3}
 \end{align}
 Thus there exists a $\mu$-exceptional prime divisor $E_0\subseteq \Supp E_{\mu}$ such that $(E_{0}|_{F}\cdot\sigma^*(K_{F_0}))>0,$ which means that $(E_{0}|_{F}\cdot\sigma^*(K_{F_0}))\geq 1.$ Hence 
 \begin{align}((K_{X'}-E_{0})|_{F}\cdot\sigma^*(K_{F_0}))=(K_{F}\cdot\sigma^*(K_{F_0}))-(E_{0}|_{F}\cdot\sigma^*(K_{F_0}))\leq 0.\label{eq: K-E<=0}
 \end{align}
On the other hand, 
\[K_{X'}=F+Z+E_\mu\]
is an effective integral divisor whose support contains all $\mu$-exceptional divisors. So $K_{X'}-E_0$ is an effective integral divisor.
As $\sigma^*(K_{F_0})$ is nef, we conclude from \eqref{eq: K-E<=0} that 
\begin{align}
 {}&((K_{X'}-E_{0})|_{F}\cdot\sigma^*(K_{F_0})) =0;\label{eq: K-E.K=0}\\
 {}&(E_{0}|_{F}\cdot\sigma^*(K_{F_0}))= 1.\label{eq: E0.K=1}
\end{align}
 In particular, \eqref{eq: K-E.K=0} implies that $\coeff_{E_0}(Z+E_\mu)=1$ and $E_0$ is the 
 unique $\mu$-exceptional prime divisor such that $(E_{0}|_{F}\cdot\sigma^*(K_{F_0}))>0$.
 Furthermore, \eqref{eq: Ealpha.K=1/3} and \eqref{eq: E0.K=1} imply that $\coeff_{E_0}E_\mu=\frac{1}{3}$ and hence $\coeff_{E_0}Z=\frac{2}{3}$.

 Finally, since $X$ has only isolated singularities and $\coeff_{E_0}E_\mu\not \in \mathbb{Z}$, we deduce that $\mu(E_0)$ is a point. 
\end{proof}

\begin{lem}\label{lem: excep2} Let $X$ be a minimal $3$-fold of general type with $p_g(X)=2$ and $K_X^3=\frac{1}{3}$. Keep the notation in Setting~\ref{set: Special modification} and  
Lemma~\ref{lem: excep1}. Then 
 \begin{enumerate}
 \item $E_0|_{F}=C_0$ is an integral $(-3)$-curve;
 
 \item $G|_F =\frac{1}{3}C_0$;
 \item $ Z_X=0$.
 \end{enumerate}
\end{lem}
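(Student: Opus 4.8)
The plan is to work on the general fiber $F$ and leverage the sharp numerical equalities forced by the minimal-volume hypothesis. From Lemma~\ref{lem: canonical deg} we have $(\mu^*(K_X)|_F)^2 = \frac13$ and $(\mu^*(K_X)|_F \cdot \sigma^*(K_{F_0})) = \frac23$, and from Lemma~\ref{lem: excep1} the exceptional divisor $E_0$ satisfies $(E_0|_F \cdot \sigma^*(K_{F_0})) = 1$ with $\coeff_{E_0} E_\mu = \frac13$. Since $F$ is a $(1,2)$-surface, its minimal model $F_0$ satisfies $K_{F_0}^2 = 1$ and $p_g(F_0) = 2$, so $|\sigma^*(K_{F_0})|$ is composed with a pencil whose movable part is a single irreducible curve. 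The first step for part~(1) is to restrict $E_0$ to $F$ and analyze the curve $C_0 := E_0|_F$; I expect to show it is irreducible and reduced by combining $(C_0 \cdot \sigma^*(K_{F_0})) = 1$ with the structure of the genus-$2$ fibration on $F_0$, and then to compute $C_0^2 = -3$ directly from the adjunction/intersection data via the Hodge index theorem.

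\textbf{Extracting the self-intersection.} To pin down $C_0^2$, I would use that on the minimal surface $F_0$ the class $\sigma^*(K_{F_0})$ together with $\mu^*(K_X)|_F$ spans a rank-$2$ sublattice of the Néron--Severi group whose intersection matrix is already determined by Lemma~\ref{lem: canonical deg}. Writing $\mu^*(K_X)|_F = \frac13 \sigma^*(K_{F_0}) + (\text{correction})$ and using that $2\mu^*(K_X)|_F - \sigma^*(K_{F_0})$ is effective (Lemma~\ref{lem: 2K>K}) with all inequalities in Lemma~\ref{lem: canonical deg} being equalities, the Hodge index theorem (Lemma~\ref{lem: HIT}(1)) should force $\mu^*(K_X)|_F$ to be proportional to $\sigma^*(K_{F_0})$ modulo the $(-3)$-curve $C_0$. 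Matching the known intersection numbers $(C_0 \cdot \sigma^*(K_{F_0})) = 1$ and the coefficient $\frac13$ then yields $C_0^2 = -3$ and simultaneously gives part~(2): since $G|_F$ captures precisely the discrepancy data and $\coeff_{E_0} E_\mu = \frac13$, restricting the defining relation $G = \mu^*(K_X + F_X) - K_{X'} - F$ to $F$ and isolating the $E_0$-contribution should give $G|_F = \frac13 C_0$.

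\textbf{Vanishing of $Z_X$.} For part~(3), the clean route is Lemma~\ref{lem: Z=0}, which reduces $Z_X = 0$ to verifying $\mu^*(Z_X)|_F \equiv 0$. Since $Z_X$ is the fixed part of $|K_X|$ and $\mu^*(K_X)|_F = \mu^*(F_X)|_F + \mu^*(Z_X)|_F$ with $\mu^*(F_X)|_F$ accounting for the mobile contribution, I would show $\mu^*(Z_X)|_F$ is a nonnegative combination whose intersection against both $\sigma^*(K_{F_0})$ and $\mu^*(K_X)|_F$ vanishes, again by the equality cases of Lemma~\ref{lem: canonical deg}. The key point is that the sharp volume equality $(\mu^*(K_X)^2 \cdot F) = (\mu^*(K_X)|_F)^2 = \frac13 = K_X^3$ leaves no room for a horizontal vertical component, forcing $\mu^*(Z_X)|_F$ to be numerically trivial. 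Then Lemma~\ref{lem: HIT}(2), applied with $F_X|_H$ and $Z_X|_H$ as in the proof of Lemma~\ref{lem: Z=0}, closes the argument.

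\textbf{Main obstacle.} The hard part will be part~(1), specifically proving that $C_0$ is a single integral $(-3)$-curve rather than a sum of several exceptional curves summing to the right intersection numbers. The intersection-number constraints alone are compatible, a priori, with $C_0$ being reducible or non-reduced, so I expect to need the uniqueness of $E_0$ from Lemma~\ref{lem: excep1} together with a genus computation (via adjunction on $F$) to rule out degenerate configurations, and to use that $E_0$ is $\mu$-exceptional with $\mu(E_0)$ a point to control how $E_0$ meets $F$. Once irreducibility and reducedness are secured, the self-intersection $-3$ and the remaining parts follow relatively mechanically from the numerology already in place.
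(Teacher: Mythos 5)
Your overall strategy (exploit the equalities of Lemma~\ref{lem: canonical deg}, Lemma~\ref{lem: 2K>K} and the Hodge index theorem, and reduce (3) to Lemma~\ref{lem: Z=0}) is the right one, and you correctly flag the integrality of $E_0|_F$ as the delicate point. But the central step --- actually producing $C_0^2=-3$ --- is not carried out, and the route you sketch for it is circular: writing $\mu^*(K_X)|_F$ in terms of $\sigma^*(K_{F_0})$ and ``the $(-3)$-curve $C_0$'' and then ``matching intersection numbers'' presupposes exactly the decomposition $2\mu^*(K_X)|_F=\sigma^*(K_{F_0})+\frac{1}{3}C_0$ that one is trying to establish, and the Gram matrix of $\mu^*(K_X)|_F$ and $\sigma^*(K_{F_0})$ from Lemma~\ref{lem: canonical deg} alone does not determine $C_0^2$. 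The missing input is the effectivity of $\mu^*(2K_X)|_F-K_F=(Z-E_\mu)|_F=(\mu^*(Z_X)+G)|_F$ (this is where Proposition~\ref{prop: new model}(4), i.e.\ $G\geq 0$, enters), together with $\coeff_{C_0}(Z-E_\mu)|_F=\frac{1}{3}$ from Lemma~\ref{lem: excep1}(2). Intersecting with the $\mu$-exceptional curve $C_0$ (so that $(\mu^*(2K_X)|_F\cdot C_0)=0$) gives $\frac{1}{3}C_0^2\leq -(K_F\cdot C_0)$, while $(K_F\cdot C_0)\geq(\sigma^*(K_{F_0})\cdot C_0)=1$ since $C_0$ is not $\sigma$-exceptional; combined with the genus formula $(K_F\cdot C_0)+C_0^2\geq -2$ this forces $(K_F\cdot C_0)=1$ and $C_0^2=-3$. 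None of this chain appears in your outline.

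The remaining parts also need the order of deductions reversed relative to your sketch. Only after $C_0^2=-3$ is known can one verify that $\sigma^*(K_{F_0})+\frac{1}{3}C_0$ is nef with self-intersection $\frac{4}{3}=(\mu^*(2K_X)|_F)^2$, and it is the resulting chain $\mu^*(2K_X)|_F\geq K_F+\frac{1}{3}E_0|_F\geq K_F+\frac{1}{3}C_0\geq\sigma^*(K_{F_0})+\frac{1}{3}C_0$ collapsing to equalities via Lemma~\ref{lem: HIT}(1) that simultaneously yields $E_0|_F=C_0$ (your ``main obstacle'') and $(\mu^*(Z_X)+G)|_F=\frac{1}{3}C_0$; the tools you propose for integrality (the genus-$2$ pencil on $F_0$, uniqueness of $E_0$) do not by themselves exclude extra $\sigma^*(K_{F_0})$-trivial components of $E_0|_F$. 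Finally, for (3) your claim that orthogonality of $\mu^*(Z_X)|_F$ to $\sigma^*(K_{F_0})$ and $\mu^*(K_X)|_F$ forces $\mu^*(Z_X)|_F\equiv 0$ is a non sequitur; the correct argument writes $\mu^*(Z_X)|_F=cC_0$ from the equality just obtained and gets $c=0$ by comparing $(\mu^*(Z_X)\cdot C_0)=0$ (projection formula, $C_0$ being $\mu$-exceptional) with $cC_0^2=-3c$, after which Lemma~\ref{lem: Z=0} applies as you intend.
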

\begin{proof}
 By Lemma~\ref{lem: excep1}(1), there is a unique integral curve $C_0$ on $F$ with $\coeff_{C_0}E_0|_{F}=1$ such that $(\sigma^*(K_{F_0})\cdot C_0)=1$. In particular, this implies that $C_0$ is not contracted by $\sigma$ and hence
 \begin{align}
 (K_F\cdot C_0)\ge (\sigma^*(K_{F_0})\cdot C_0)=1.\label{eq: KC>1}
 \end{align} 
Note that in Setting~\ref{set: Special modification}, 
\begin{align}\label{eq: Z-E=Z+G}Z-E_{\mu}=\mu^*(2K_X)-K_{X'}-F=\mu^*(Z_X)+G
\end{align}
is an effective $\mathbb{Q}$-divisor. Hence 
\begin{align}\label{eq: Z-E=2K-F}(Z-E_{\mu})|_F=\mu^*(2K_X)|_F-K_{F}
\end{align}is an effective $\mathbb{Q}$-divisor with \[\coeff_{C_0}(Z-E_{\mu})|_F=\coeff_{E_0}(Z-E_{\mu})=\frac{1}{3}\] by Lemma~\ref{lem: excep1}(2). 
Hence by \eqref{eq: Z-E=2K-F},
\begin{align}\label{eq: C<-KC}
 \frac{1}{3}C_0^2\leq \big((Z-E_{\mu})|_F\cdot C_0\big)=-(K_F\cdot C_0).
\end{align}
Here we used the fact that $C_0$ is contracted by $\mu$
by Lemma~\ref{lem: excep1}(3).  
Combining \eqref{eq: KC>1} and \eqref{eq: C<-KC}, we have
\[
(K_F\cdot C_0)+C_0^2\leq -2(K_F\cdot C_0)\leq -2,
\]
 According to the genus formula, we deduce that $
 (K_{F}\cdot C_0)+C_0^2=-2$ and $C_0^2=-3$, which means that $C_0$ is a $(-3)$-curve.
 
Since $\sigma^*(K_{F_0})$ is nef and 
\[
\left(\sigma^*(K_{F_0})+\frac{1}{3}C_0\right)\cdot C_0=1-1=0,
\]
we know that $\sigma^*(K_{F_0})+\frac{1}{3}C_0$ is nef. Moreover,  
$(\sigma^*(K_{F_0})+\frac{1}{3}C_0)^2=\frac{4}{3}.$
On the other hand,
from \eqref{eq: Z-E=2K-F} we have
\begin{align}
 \mu^*(2K_X)|_F={}& K_{F}+(Z-E_{\mu})|_F\geq K_{F}+\frac{1}{3}E_0|_F \notag\\
 \geq{}& K_{F}+\frac{1}{3}C_0\geq \sigma^*(K_{F_0})+\frac{1}{3}C_0. \label{eq: long zariski}
\end{align}
Here $(\mu^*(2K_X)|_F)^2=\frac{4}{3}$ 
 by Lemma~\ref{lem: canonical deg}. Hence by Lemma~\ref{lem: HIT}(1), all inequalities in \eqref{eq: long zariski} become equalities. 
 In particular,  
 $(Z-E_{\mu})|_F=\frac{1}{3}E_0|_F=\frac{1}{3}C_0$.  
 Moreover,
 $(\mu^*(Z_X)+G)|_F=\frac{1}{3}C_0$ by \eqref{eq: Z-E=Z+G}, so $\mu^*(Z_X)|_F$ and $G|_F$ are multiples of $C_0$. Since $C_0$ is contracted by $\mu$
by Lemma~\ref{lem: excep1}(3), we have $\mu^*(Z_X)|_F=0$ and $G|_F=\frac{1}{3}C_0$. The last assertion follows from Lemma~\ref{lem: Z=0}. 
\end{proof}

Now we can study the Gorenstein indices of singularities of $X$ by Lemma~\ref{lem: plt}.
\begin{lem}\label{lem: singularty of X}
Let $X$ be a minimal $3$-fold of general type with $p_g(X)=2$ and $K_X^3=\frac{1}{3}$. 
 Suppose that $P$ is a non-Gorenstein singularity of $X$. 
Then $r_P\in \{2, 3\}$ and there is exactly one such $P$ with $r_P=3$. Here $r_P$ is the Cartier index of $K_X$ at $P$.
\end{lem}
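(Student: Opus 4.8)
The plan is to deduce everything from Lemma~\ref{lem: plt}, whose hypotheses are now verifiable. First I would record that Lemma~\ref{lem: excep2} gives $Z_X=0$ together with $G|_F=\frac13 C_0$, where $C_0$ is an integral $(-3)$-curve; since the coefficient $\frac13<1$, the pair $(F,G|_F)$ is klt. Hence both parts of Lemma~\ref{lem: plt} apply: $(X,F_X)$ is plt, $F_X$ is normal and klt with $(\mu|_F)^*(K_{F_X})=K_F+\frac13 C_0$, and every non-Gorenstein singularity $P$ of $X$ lies on $F_X$ with $r_P$ equal to the Cartier index of the Weil divisor $K_X|_{F_X}$ at $P$; moreover $r_P=2$ whenever $P$ is a Gorenstein point of $F_X$.

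The core of the argument is then to understand the singularities of $F_X$ through the contraction $\mu|_F\colon F\to F_X$. Rewriting the adjunction identity as $K_F=(\mu|_F)^*(K_{F_X})-\frac13 C_0$, I see that among all $\mu|_F$-exceptional curves only $C_0$ carries a nonzero discrepancy over $F_X$ (namely $-\frac13$), while every other exceptional curve has discrepancy $0$. Consequently, away from the point $P_0:=\mu(E_0)$ the surface $F_X$ has at worst canonical, i.e. Du Val, hence Gorenstein, singularities. To identify $P_0$ itself, I would argue that any exceptional curve $\Gamma\neq C_0$ meeting $C_0$ would satisfy $(K_F\cdot\Gamma)=-\frac13(C_0\cdot\Gamma)$, forcing $(C_0\cdot\Gamma)\in 3\mathbb{Z}$; this is incompatible with the transverse way distinct exceptional curves meet in a resolution graph, so the whole fiber of $\mu|_F$ over $P_0$ is just $C_0$. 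Thus $P_0$ is the cyclic quotient singularity $\frac13(1,1)$ and is the unique non-Gorenstein point of $F_X$.

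With this in hand the conclusion follows quickly. For a non-Gorenstein point $P\neq P_0$ of $X$, the image point is Gorenstein on $F_X$, so $r_P=2$ by Lemma~\ref{lem: plt}(2). For $P_0$, I would use the adjunction relation $2K_X|_{F_X}\sim K_{F_X}$ together with the local class group $\mathrm{Cl}_{P_0}(F_X)\cong\mathbb{Z}/3$: since $K_{F_X}$ has order $3$ there and $2$ is invertible modulo $3$, the class of $K_X|_{F_X}$ also has order $3$, giving $r_{P_0}=3$; that $P_0$ is genuinely non-Gorenstein on $X$ is clear because $E_0$ has fractional discrepancy $\frac13$ (Lemma~\ref{lem: excep1}(2)). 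Combining the two cases, every non-Gorenstein singularity of $X$ has $r_P\in\{2,3\}$ and exactly one, namely $P_0$, has $r_P=3$.

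The step I expect to be the main obstacle is the precise determination of $P_0$: proving that no additional curves are contracted to $P_0$, so that the singularity is exactly $\frac13(1,1)$ with local class group $\mathbb{Z}/3$. This is what simultaneously caps $r_P$ at $3$ and upgrades the divisibility $r_{P_0}\mid 3$ coming from $2K_X|_{F_X}\sim K_{F_X}$ into the equality $r_{P_0}=3$. The integrality lever $(K_F\cdot\Gamma)\in\mathbb{Z}$ is the key, but I would take care to run it on the minimal resolution of $F_X$ (or otherwise rule out superfluous $(-1)$-curves in the fiber over $P_0$) before reading off the singularity type.
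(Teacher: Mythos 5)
Your proposal matches the paper's argument in all essentials: verify the hypotheses of Lemma~\ref{lem: plt} via Lemma~\ref{lem: excep2}, show that $F_X$ is Du Val away from $P_0=\mu(C_0)$ and that the fiber of $\mu|_F$ over $P_0$ is exactly $C_0$ by the integrality of $(K_F\cdot C')=-\frac{1}{3}(C_0\cdot C')$ (the paper justifies the needed transversality by noting the fiber has simple normal crossing support since $F_X$ has rational singularities), so that $P_0$ is a $\frac{1}{3}(1,1)$ point, and then conclude $r_P=2$ off $P_0$ and $r_{P_0}=3$ from the Cartier index of $K_X|_{F_X}$. Your computation of the exact order of $K_X|_{F_X}$ in the local class group $\mathbb{Z}/3$ is a marginally more explicit version of the paper's remark that $3K_X|_{F_X}$ is Cartier at $P_0$ (combined with $P$ being non-Gorenstein), but the proof is the same.
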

\begin{proof}
By Lemma~\ref{lem: excep2}, all conditions of Lemma~\ref{lem: plt} are satisfied.

We claim that $F_X$ has exactly one non-Gorenstein singularity which is of type $\frac{1}{3}(1,1)$. Indeed, by Lemmas~\ref{lem: excep2} and \ref{lem: plt}, 
\[
 K_F+\frac{1}{3}C_0 =(\mu|_{F})^*(K_{F_X}).
\]
So outside the point $P_0=\mu|_{F}(C_0)$, $F_X$ has at worst Du Val singularities which are clearly Gorenstein. The fiber of $\mu|_{F}$ over $P_0$ has simple normal crossing support as $F_X$ has only rational singularities, so the fiber of $\mu|_{F}$ over $P_0$ is exactly $C_0$, otherwise there exists an exceptional curve $C'$ connecting $C_0$ with \[(K_F\cdot C')=\frac{1}{3}(-C_0\cdot C')=-\frac{1}{3},\] which is absurd. Since $C_0$ is a $(-3)$-curve, it is clear that $C_0$ is contracted to a cyclic quotient singularity of type $\frac{1}{3}(1,1)$. 
 
Now to conclude the lemma, by Lemma~\ref{lem: plt}(2), we only need to consider the case that $P=P_0$ is the unique non-Gorenstein singularity of $F_X$.
 Then $r_P=3$ as 
$3K_X|_{F_X}$ is Cartier at $P$. 
\end{proof}

\subsection{Geometry of pluricanonical systems}\label{sec 4.2}\

In this subsection, we study the pluricanonical maps of $X$.

Let $\pi: W\to X$ be as in Setting~\ref{Set up for canonical and pluricanonical maps} and keep the same notation. We may further assume that $\pi$ factorizes through the modification $\mu: X'\to X$ in \S\ref{sec: 4.1}. In this case, $S$ is a $(1,2)$-surface. Denote by $\sigma_S:S\to S_0=F_0$ the morphism to its minimal model. Let $C$ be a general member of $\Mov |K_S|=\Mov |\sigma_S^*K_{S_0}|$. Then $C$ is a smooth projective curve of genus $2$. Here for a good reference on geometry of $(1,2)$-surfaces, we refer to \cite[\S2]{Horikawa}.

Recall that 
\begin{align}
(\pi^*(K_X)|_S)^2{}&=\frac{1}{3},\label{eq: 4.2-1}\\
 (\pi^*(K_X)|_S\cdot C){}&=
 \frac{2}{3}\label{eq: 4.2-2}
\end{align}
by Lemma~\ref{lem: canonical deg} and the projection formula.

\begin{lem}\label{lem: bound of P2}
Let $X$ be a minimal $3$-fold of general type with $p_g(X)=2$ and $K_X^3=\frac{1}{3}$. Then $P_2(X)\le 4$. 
\end{lem}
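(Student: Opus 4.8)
The goal is to bound $P_2(X)=h^0(X,2K_X)$ from above by $4$. Since we already know $\chi(\mathcal O_X)$ enters Reid's formula, and we have detailed control on the singularities from Lemma~\ref{lem: singularty of X}, one natural route would be purely numerical via \eqref{eq: RR}. However, Reid's formula requires the full basket and $\chi(\mathcal O_X)$, which are not yet pinned down at this stage. So the plan is instead to argue geometrically by restricting the bicanonical system to the general fiber surface $S$ and exploiting the $(1,2)$-surface structure.

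\medskip

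The plan is to use the fibration $\pi\colon W\to X$ and the general fiber $S$ (a $(1,2)$-surface) together with the restriction maps $\theta_{m,-j}$ from Setting~\ref{Set up for canonical and pluricanonical maps}. Concretely, since $|K_X|$ is composed with a rational pencil over $\Gamma\simeq\mathbb P^1$, I would first note that $|K_X|$ contributes essentially only $P_1(X)=p_g(X)=2$ sections, all pulled back from $\Gamma$, so the ``horizontal'' content of $|2K_X|$ must be detected on the fiber $S$. The key exact-sequence input is the restriction
\[
0\to H^0(W, M_{2,-1})\to H^0(W, M_{2,0})\to H^0(S, M_{2,0}|_S),
\]
where $M_{2,0}=\Mov|\rounddown{\pi^*(2K_X)}|$ and $M_{2,-1}=\Mov|\rounddown{\pi^*(2K_X)-S}|$. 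Because $|K_X|$ is composed with a pencil with $p_g=2$, the subsystem $M_{2,-1}$ accounts for at most the sections coming from $|K_X|\cdot|K_X|$ on the base, contributing a bounded number (I expect at most $1$ after accounting for $h^0(\Gamma,\cdot)$), while the image $u_{2,0}$ of $\theta_{2,0}$ is controlled by $h^0(S, \pi^*(2K_X)|_S)$.

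\medskip

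The crucial estimate is therefore to bound $h^0(S, \pi^*(2K_X)|_S)$ on the $(1,2)$-surface $S$. Here I would use the numerical data \eqref{eq: 4.2-1}--\eqref{eq: 4.2-2}, namely $(\pi^*(K_X)|_S)^2=\tfrac13$ and $(\pi^*(K_X)|_S\cdot C)=\tfrac23$ with $C$ a genus-$2$ curve generating $\Mov|K_S|$. Writing $L=\pi^*(2K_X)|_S$, one has $L^2=\tfrac43$ and $(L\cdot C)=\tfrac43$; comparing $L$ with $\sigma_S^*K_{S_0}$ (noting $L\succcurlyeq$ a multiple of $\sigma_S^*K_{S_0}$ by the Zariski-type chain analogous to \eqref{eq: long zariski}) and invoking the Hodge index estimates of Lemma~\ref{lem: HIT}, I would bound the movable/horizontal part of $|L|$ and hence $h^0(S,L)$. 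Combining the fiber count (at most $P_1$ of the base direction, giving the $M_{2,-1}$ part) with the bound $u_{2,0}\le 3$ yields $P_2(X)\le 4$.

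\medskip

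The main obstacle I anticipate is the fiber-restriction bookkeeping: carefully separating the contribution of sections pulled back from $\Gamma$ (the vertical part, governed by $|2K_X|$ restricted to the base $\mathbb P^1$ and the structure of $K_X=F_X+Z_X$ with $Z_X=0$ by Lemma~\ref{lem: excep2}) from the genuinely new horizontal sections counted on $S$, and ensuring no double counting across the exact sequence. In particular, I must verify that $h^0(S,L)\le 3$ on the $(1,2)$-surface; this hinges on showing $L$ cannot be too positive, which is exactly where the sharp intersection numbers $L^2=\tfrac43$, $(L\cdot C)=\tfrac43$, together with the geometry of $(1,2)$-surfaces from \cite[\S2]{Horikawa}, must be pushed to their limit. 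If the naive Hodge-index bound gives $h^0(S,L)\le 4$ rather than $3$, one extra section must be absorbed by showing the corresponding class is forced into the base-direction contribution, which is the delicate point.
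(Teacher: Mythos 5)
Your overall strategy --- filter $|2K_X|$ by restriction to the general fiber $S$ and push the sharp numbers $(\pi^*(K_X)|_S)^2=\frac13$ and $(\pi^*(K_X)|_S\cdot C)=\frac23$ to a contradiction --- is exactly the paper's, but your quantitative bookkeeping does not close, and the gap is not the one you flag at the end. Since $|K_X|$ is a rational pencil with $p_g(X)=2$, we have $\rounddown{2\pi^*(K_X)}\geq 2S$, so the ``vertical'' term $h^0(W,\rounddown{2\pi^*(K_X)}-S)$ is at least $h^0(W,S)=2$; it can never be $\le 1$ as you expect. Consequently a bound $u_{2,0}\le 3$ is useless: you must prove $u_{2,0}\le 2$, and you must control the vertical term by a \emph{second} level of filtration. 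The paper writes $P_2(X)=u_{2,0}+u_{2,-1}+h^0(W,M_{2,-2})$ and proves the three bounds $u_{2,0}\le 2$, $u_{2,-1}\le 1$, $h^0(W,M_{2,-2})\le 1$, which sum to $4$; your two-level decomposition with targets $3+1$ cannot be repaired without redistributing the budget in this way.

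The second gap is the mechanism for bounding $u_{2,0}$ (and $u_{2,-1}$). You propose to bound $h^0(S,L)$ for the $\mathbb{Q}$-divisor $L=\pi^*(2K_X)|_S$ with $L^2=\frac43$ by ``Hodge index estimates,'' but there is no direct Hodge-index bound on $h^0$ of (the round-down of) a $\mathbb{Q}$-divisor of small self-intersection. What actually works is a dichotomy on the restricted linear system: if $u_{2,0}\ge 3$, then either $|M_{2,0}||_S$ is composed with the same pencil as $|C|$, forcing $M_{2,0}|_S\ge 2C$ and hence $K_X^3\ge(\pi^*(K_X)|_S\cdot C)=\frac23$, or it is not, forcing $(M_{2,0}|_S\cdot C)\ge 2$ and hence $(\pi^*(K_X)|_S\cdot C)\ge 1$; both contradict the sharp intersection numbers. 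An analogous dichotomy gives $u_{2,-1}\le 1$, and $h^0(W,M_{2,-2})\le 1$ follows because a moving $M_{2,-2}$ would give $(\pi^*(K_X)^2\cdot M_{2,-2})>0$ and hence $K_X^3>(\pi^*(K_X)|_S)^2=\frac13$. Your proposal gestures at these ingredients but, as written, neither the decomposition nor the fiberwise bound would yield $P_2(X)\le 4$.
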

\begin{proof}
Keep the notation in Setting~\ref{Set up for canonical and pluricanonical maps}.
Recall that $\pi: W\to X$ is a resolution, $S$ is a general irreducible element of $\Mov |\rounddown{\pi^*(K_X)}|$. For any integers $0<m\le 4$ and $0\leq j\leq m$, we consider the linear system \[|M_{m, -j}|:= \Mov|\rounddown{\pi^*(mK_X)-jS}|\] with restriction map
\[
\theta_{m,-j}: H^0(W, M_{m,-j})\to H^0(S, M_{m,-j}|_S),
\]
and set $u_{m,-j}=\dim \Image \theta_{m,-j}$.

\medskip 

 \textbf{Step 1.} We claim that $u_{2,0}\le 2$.

 Suppose that $u_{2,0}\ge 3$. If $|M_{2,0}||_S$ and $|C|$ are composed with the same pencil, then $M_{2,0}|_S\ge 2C$ since $u_{2,0}\ge 3$. Thus $2\pi^*(K_X)|_S\ge 2C$, which implies that 
 \[
 K_X^3\geq (\pi^*(K_X)|_S)^2 \ge (\pi^*(K_X)|_S\cdot C)= \frac{2}{3},
\]
 which is a contradiction. On the other hand, if $|M_{2,0}||_S$ and $|C|$ are not composed with the same pencil, then $(M_{2,0}|_S\cdot C)\ge 2$ as $|M_{2,0}|$ induces a nontrivial movable linear system on $C$. Hence
 \[
 (\pi^*(K_X)|_S\cdot C)\ge\frac{1}{2}(M_{2,0}|_S\cdot C)\ge 1,
\]
which contradicts \eqref{eq: 4.2-2}. We conclude that $u_{2,0}\le 2$.

\medskip

 \textbf{Step 2.} We claim that $u_{2,-1}\le 1$. 

 Suppose that $u_{2,-1}\ge 2$.
 If $|M_{2,-1}||_S$ and $|C|$ are composed with the same pencil, then $M_{2,-1}|_S\geq C$ and hence
 \begin{align*}
 2K_X^3 &\ge (\pi^*(K_X)^2\cdot S)+(\pi^*(K_X)^2\cdot M_{2,-1}) \\
 &\ge(\pi^*(K_X)^2\cdot S)+(\pi^*(K_X)|_S\cdot M_{2,-1}|_S)\\
 &\ge(\pi^*(K_X)^2\cdot S)+(\pi^*(K_X)|_S\cdot C)\\
 &\ge 1,
 \end{align*}
 which is a contradiction. 
On the other hand, if $|M_{2,-1}||_S$ and $|C|$ are not composed with the same pencil, then $(M_{2,-1}|_S\cdot C)\geq 2$. 
Hence
 \[
 (\pi^*(K_X)|_S\cdot C)\ge\frac{1}{2}(M_{2,-1}|_S\cdot C)\ge 1,
\]
which contradicts \eqref{eq: 4.2-2}. Thus $u_{2,-1}\le 1$.

\medskip

\textbf{Step 3.} We claim that $h^0(W, M_{2, -2})\le 1$.

Otherwise, 
$|M_{2,-2}|$ is a nontrivial movable linear system. Since $K_X$ is nef and big, we have $(\pi^*(K_X)^2\cdot M_{2,-2})>0$. 
Recall that $2\pi^*(K_X)-2S\geq M_{2,-2}$ by definition. Then by \eqref{eq: 4.2-1}, 
\[
K_X^3\ge (\pi^*(K_X)|_S)^2+\frac{1}{2}((\pi^*(K_X))^2\cdot M_{2,-2})>\frac{1}{3},
\]
which is a contradiction. Thus $h^0(W, M_{2, -2})\le 1$.

\medskip

\textbf{Step 4.} The conclusion that $P_2(X)\le 4$.

This follows from 
\[
P_2(X)=h^0(W, \rounddown{2\pi^*(K_X)})= u_{2,0}+u_{2,-1}+h^0(W, M_{2, -2}) 
\]
and previous steps. 
\end{proof}

\begin{lem}\label{lem: bound of P3 and P4}
Let $X$ be a minimal $3$-fold of general type with $p_g(X)=2$ and $K_X^3=\frac{1}{3}$. 
Then
\begin{enumerate}
 \item $P_3(X)\ge P_2(X)+3$;
 \item $|3K_X|$ defines a generically finite map;
 \item $P_4(X)\ge P_3(X)+4$.
\end{enumerate}
\end{lem}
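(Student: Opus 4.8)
The plan is to compare consecutive plurigenera by restricting pluricanonical systems twice: first to the general $(1,2)$-surface fiber $S$ of Setting~\ref{Set up for canonical and pluricanonical maps}, and then to the genus-$2$ curve $C$. Throughout I would keep in hand the numerical data $(\pi^*(K_X)|_S)^2=\tfrac13$ and $(\pi^*(K_X)|_S\cdot C)=\tfrac23$ from \eqref{eq: 4.2-1}--\eqref{eq: 4.2-2}, together with the positivity $2\pi^*(K_X)|_S\ge \sigma_S^*(K_{S_0})\ge C$ furnished by Lemma~\ref{lem: 2K>K}, and the identities $C^2=0$, $(K_S\cdot C)=2$.

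First I would reduce (1) and (3) to statements on $S$. Since $|K_X|$ is composed with the rational pencil, $\pi^*(K_X)=S+Z_\pi$ with $Z_\pi\ge 0$, so $\rounddown{\pi^*(mK_X)}-S=\rounddown{\pi^*((m-1)K_X)+Z_\pi}\ge \rounddown{\pi^*((m-1)K_X)}$. Hence the restriction sequence
\[
0\to H^0\!\big(W,\rounddown{\pi^*(mK_X)}-S\big)\to H^0\!\big(W,\rounddown{\pi^*(mK_X)}\big)\to H^0\!\big(S,\rounddown{\pi^*(mK_X)}|_S\big)
\]
yields $P_m(X)\ge P_{m-1}(X)+u_{m,0}$. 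Thus (1) becomes $u_{3,0}\ge 3$ and (3) becomes $u_{4,0}\ge 4$. To bound $u_{m,0}$ from below I would restrict once more from $S$ to $C$. Iterating $2\pi^*(K_X)|_S\ge C$ shows that $|M_{m,0}|_S|$ contains several copies of $C$ and of $\sigma_S^*(K_{S_0})$, so the kernel $h^0(S,\,M_{m,0}|_S-C)$ is estimated by Riemann--Roch on $S$ while the image in $H^0(C,\,M_{m,0}|_S|_C)$ is estimated by Riemann--Roch and Clifford's theorem on the genus-$2$ curve $C$, where $\deg(M_{m,0}|_S|_C)\ge 2$ for $m=3$ and is larger for $m=4$. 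Summing the two contributions gives $u_{3,0}\ge 3$ and $u_{4,0}\ge 4$. The step that turns these surface/curve counts into genuine lower bounds for $u_{m,0}$ — i.e.\ lifting the sections to $W$ — I would justify by Kawamata--Viehweg vanishing, after rewriting $\rounddown{\pi^*(mK_X)}-S-K_W$ as a nef and big $\mathbb{Q}$-divisor plus a simple normal crossing boundary with coefficients in $[0,1)$.

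For (2) I would prove that $\Phi_{|3K_X|}$ is generically finite. As $|3K_X|$ contains the canonical pencil, $\Phi_{|3K_X|}$ already separates the fibers $S$, so it suffices to show that $\Phi_{|3K_X|}|_S$ maps $S$ onto a surface, i.e.\ that the nef movable divisor $M_{3,0}|_S$ satisfies $(M_{3,0}|_S)^2>0$. If instead $(M_{3,0}|_S)^2=0$, then $|M_{3,0}|_S|$ would be composed with a pencil and, since $u_{3,0}\ge 3$, one would have $M_{3,0}|_S\ge 2C'$ for a moving curve $C'$; the bound $(\pi^*(K_X)|_S\cdot M_{3,0}|_S)\le 3(\pi^*(K_X)|_S)^2=1$ then forces $(\pi^*(K_X)|_S\cdot C')\le\tfrac12$, which contradicts the intersection data $(\pi^*(K_X)|_S\cdot C)=\tfrac23$ and $2\pi^*(K_X)|_S\ge\sigma_S^*(K_{S_0})$ that pin down the genus-$2$ pencil. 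Hence $M_{3,0}|_S$ is big, $\Phi_{|3K_X|}$ has a $3$-dimensional image, and (2) follows.

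The hard part will be the two lower bounds $u_{3,0}\ge 3$ and $u_{4,0}\ge 4$, and more precisely their lifting step. Because $\pi^*(K_X)$ is only a $\mathbb{Q}$-divisor with nontrivial fractional part supported on the $\mu$-exceptional locus, verifying the nef-and-big hypothesis of Kawamata--Viehweg vanishing — equivalently, controlling the fixed parts and base loci of $|M_{m,0}|$ along the exceptional divisors and the $(-3)$-curve $C_0$ of Lemmas~\ref{lem: excep1}--\ref{lem: excep2} — is delicate, as is ruling out the possibility that the restricted systems are accidentally composed with the pencil $|C|$, which is exactly what the positivity $2\pi^*(K_X)|_S\ge\sigma_S^*(K_{S_0})$ is there to prevent.
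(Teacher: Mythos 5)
Your reduction to $u_{3,0}\ge 3$ and $u_{4,0}\ge 4$, and your argument for (2), match the paper's structure; the problem is that the two lower bounds themselves — which you correctly identify as the hard part — are exactly where your sketch does not close. The restriction sequence $0\to H^0(S, L-C)\to H^0(S,L)\to H^0(C, L|_C)$ gives an \emph{upper} bound $h^0(L)\le h^0(L-C)+h^0(L|_C)$; to convert the two summands into a lower bound for $h^0(L)$ you need surjectivity onto $H^0(C,L|_C)$, and the vanishing this requires is unavailable: for $L=K_S+\roundup{\pi^*(K_X)|_S}$ one would need something like $H^1(S,K_S+\roundup{\pi^*(K_X)|_S-C})=0$, but $(\pi^*(K_X)|_S-C)^2=\frac13-2\cdot\frac23+0=-1<0$, so $\pi^*(K_X)|_S-C$ is not nef and big and Kawamata--Viehweg does not apply. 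Moreover your degree counts on $C$ point the wrong way: $(M_{3,0}|_S\cdot C)\le 3(\pi^*(K_X)|_S\cdot C)=2$ and $(M_{4,0}|_S\cdot C)\le \frac83$, so these restricted divisors have degree \emph{at most} $2$ on the genus-$2$ curve $C$, where Clifford/Riemann--Roch give $h^0\le 2$ (with equality only for the canonical class) — an upper bound on the contribution from $C$, not a mechanism for producing the three, respectively four, sections you need. The paper avoids this entirely: for (1) it applies Kawamata--Viehweg on $W$ to get surjectivity of $H^0(W,K_W+\roundup{\pi^*(K_X)}+S)\to H^0(S,K_S+\roundup{\pi^*(K_X)|_S})$ and then invokes \cite[Theorem~3.2]{CCZ}, a nontrivial ready-made theorem on $(1,2)$-surfaces whose hypothesis is precisely the positivity $2\pi^*(K_X)|_S\ge\sigma_S^*(K_{S_0})$ from Lemma~\ref{lem: 2K>K}; what you are proposing amounts to reproving that theorem, and the naive double-restriction does not do it.

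For (3) the gap is wider still, because the paper's route is genuinely different from what you describe: it does not restrict $4\pi^*(K_X)$ to $C$ at all, but instead uses the surjectivity of $H^0(W,2(K_W+S))\to H^0(S,2K_S)$ from \cite[Theorem~2.4(1)]{CJ} together with $|4K_W|\lsgeq|2(K_W+S)|$ and the fact that $P_2(S)=4$ for a $(1,2)$-surface, giving $u_{4,0}\ge 4$ in one stroke. Your part (2) is essentially the paper's argument (with the minor point that one should justify why a pencil $\{C'\}$ with $(\pi^*(K_X)|_S\cdot C')\le\frac12$ must coincide with the genus-$2$ pencil $|C|$, e.g.\ via $2\pi^*(K_X)|_S\ge\sigma_S^*(K_{S_0})$ and \cite[Lemma~2.4]{EXPIII}), but as it stands (1) and (3) rest on estimates your method cannot deliver without importing the two cited theorems or reconstructing their proofs.
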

\begin{proof}
Keep the notation in Setting~\ref{Set up for canonical and pluricanonical maps}.

By the Kawamata--Viehweg vanishing theorem, the natural restriction map
\[
H^0(W, K_{W}+\roundup{\pi^*(K_X)}+S)\to H^0(S, K_S+\roundup{\pi^*(K_X)|_S})
\]
is surjective. Here in the last term, the restriction on $S$ commutes with the roundup as $S$ is general. So
\begin{align}
|3K_{W}||_S\lsgeq |K_{W}+\roundup{\pi^*(K_X)}+S||_S\lsgeq |K_S+\roundup{\pi^*(K_X)|_S}|.\label{eq: 3K>KS+K}
\end{align}
Note that $2\pi^*(K_X)|_S\ge \sigma_S^*(K_{S_0})$ by Lemma~\ref{lem: 2K>K} as $\pi$ factorizes through $\mu$. 
By \cite[Theorem~3.2]{CCZ}, we have $h^0(S, K_S+\roundup{\pi^*(K_X)|_S})\ge 3$. Then $u_{3,0}\ge 3$, which implies that 
\[P_3(X)=h^0(W, \rounddown{3\pi^*(K_X)}-S)+u_{3,0}\ge P_2(X)+3.\] 
Assertion (1) is proved.

Next, we claim that $|K_S+\roundup{\pi^*(K_X)|_S}|$ defines a generically finite map on $S$. Otherwise, it is composed with the same pencil with $|C|$ and thus $K_S+\roundup{\pi^*(K_X)|_S}\ge 2C$ since $h^0(S, K_S+\roundup{\pi^*(K_X)|_S})\ge 3$. By comparing the movable parts of both sides of \eqref{eq: 3K>KS+K}, we know that $3\pi^*(K_X)|_S\geq 2C$. Thus by \eqref{eq: 4.2-2}, 
\[
K_X^3\ge (\pi^*(K_X)|_S)^2\ge \frac{2}{3}(\pi^*(K_X)|_S\cdot C)= \frac{4}{9}, 
\]
which is a contradiction. So $|K_S+\roundup{\pi^*(K_X)|_S}|$ defines a generically finite map on $S$. 
By \eqref{eq: 3K>KS+K}, $|3K_X|$ defines a generically finite map. 
Assertion (2) is proved.

By \cite[Theorem~2.4(1)]{CJ}, the natural restriction map
\[
H^0(W, 2(K_{W}+S))\to H^0(S,2K_S)
\]
is surjective. Note that $P_2(S)=4$ and $|4K_{W}|\lsgeq |2(K_{W}+S)|$. We deduce that $u_{4,0}\ge 4$, which implies that $P_4(X)\ge P_3(X)+4$.
\end{proof}

\begin{prop}\label{prop: RR X16}
Let $X$ be a minimal $3$-fold of general type with $p_g(X)=2$ and $K_X^3=\frac{1}{3}$. Then
\begin{enumerate}
\item $\chi(\mathcal{O}_X)=-1$, $P_2(X)=4$;
 \item the Reid basket of $X$ is $\{2\times (1, 2), (1,3)\}$;
 \item the Hilbert series of $X$ is given by
 \[
\sum_{k\geq 0}h^0(X, kK_X)q^k=\frac{1-q^{16}}{(1-q)^2 (1-q^{2})(1-q^{3})(1-q^{8})}.
\]
\end{enumerate}
All the above data are the same as those of a general 
hypersurface of degree $16$ in $\mathbb{P}(1,1,2,3,8)$.
\end{prop}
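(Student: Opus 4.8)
The plan is to determine the three invariants $\chi(\OX)$, $P_2(X)$ and the Reid basket at one stroke, by feeding the numerical constraints already established in Lemmas~\ref{lem: singularty of X}, \ref{lem: bound of P2} and \ref{lem: bound of P3 and P4} into Reid's Riemann--Roch formula \eqref{eq: RR}. First, by Lemma~\ref{lem: singularty of X} every non-Gorenstein point has Cartier index $2$ or $3$, with exactly one point of index $3$. Since an index-$2$ (resp. index-$3$) terminal point can only contribute orbifold entries of type $(1,2)$ (resp. $(1,3)$), the Reid basket must have the shape $\{n\times(1,2),\,k\times(1,3)\}$ for some integers $n\ge 0$ and $k\ge 1$. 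As $X$ is minimal of general type, the Kawamata--Viehweg vanishing theorem gives $P_m(X)=\chi(\OX(mK_X))$ for all $m\ge 2$, so \eqref{eq: RR} with $K_X^3=\tfrac13$ turns $P_2,P_3,P_4$ into explicit affine functions of $n$, $k$ and $\chi(\OX)$.

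Next I would record these functions. Writing $\chi=\chi(\OX)$ and evaluating $l(m)$ for the two orbifold types gives
\begin{align*}
P_2&=\tfrac16+\tfrac n4+\tfrac k3-3\chi,\\
P_3&=\tfrac56+\tfrac n4+\tfrac{2k}3-5\chi,\\
P_4&=\tfrac73+\tfrac n2+\tfrac{2k}3-7\chi.
\end{align*}
Substituting into $P_3\ge P_2+3$ and $P_4\ge P_3+4$ (Lemma~\ref{lem: bound of P3 and P4}) and into $P_2\le 4$ (Lemma~\ref{lem: bound of P2}) yields the three linear inequalities $k-6\chi\ge 7$, $n-8\chi\ge 10$ and $3n+4k-36\chi\le 46$. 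Eliminating $\chi$ between the first and third, and between the second and third, produces $3n-2k\le 4$ and $8k-3n\le 2$; adding them forces $6k\le 6$, hence $k=1$. The crux here, and the step I expect to be most delicate to present cleanly, is that none of the three inequalities alone bounds the a priori unbounded integer $k$: only the combined elimination rigidifies it, and the reader must track the inequality directions carefully through the substitutions.

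With $k=1$, the inequality $k-6\chi\ge 7$ becomes $\chi\le -1$. For the reverse bound I would invoke the one topological input: $h^1(\OX)=0$ by \cite[Theorem~1.5(3)]{Chen07}, whence $\chi(\OX)=1-h^1(\OX)+h^2(\OX)-p_g(X)=-1+h^2(\OX)\ge -1$. Therefore $\chi(\OX)=-1$ (and, as a byproduct, $h^2(\OX)=0$). Feeding $k=1$, $\chi=-1$ back into $3n-2k\le 4$ and $8k-3n\le 2$ gives $2\le n\le 2$, so $n=2$. This establishes $\chi(\OX)=-1$ and the basket $\{2\times(1,2),(1,3)\}$, i.e. parts (1) and (2); substituting into the formula for $P_2$ then returns $P_2(X)=4$.

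For part (3), once $K_X^3=\tfrac13$, $\chi(\OX)=-1$ and the basket are fixed, \eqref{eq: RR} together with $P_0=1$ and $P_1=p_g(X)=2$ determines every $P_m(X)$; assembling the generating series and summing the resulting quasi-polynomial is a routine (if tedious) power-series identity yielding $\sum_{k\ge 0}P_k(X)q^k=\frac{1-q^{16}}{(1-q)^2(1-q^2)(1-q^3)(1-q^8)}$. Finally, to match a general hypersurface $X_{16}\subset\bP(1,1,2,3,8)$ I would compute its invariants directly: $K=\OO(16-15)=\OO(1)$ gives $K^3=\tfrac{16}{48}=\tfrac13$ and $p_g=\dim H^0(\OO(1))=2$; the monomial structure $x_4^2+f_0$ shows that a general $X_{16}$ misses the index-$8$ and index-$2$ coordinate points but passes through the index-$3$ point, and a standard quasi-smoothness analysis of its orbifold locus recovers $\chi=-1$ and the basket $\{2\times(1,2),(1,3)\}$. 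Since $K^3$, $\chi$ and the basket then coincide with those of $X$, Reid's formula shows $X$ and $X_{16}$ share all plurigenera, hence the same Hilbert series.
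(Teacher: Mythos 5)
Your proof is correct and follows essentially the same route as the paper: the basket shape $\{n\times(1,2),k\times(1,3)\}$ with $k\ge 1$ from Lemma~\ref{lem: singularty of X}, Reid's Riemann--Roch plus Kawamata--Viehweg vanishing for $P_2,P_3,P_4$, and the bounds of Lemmas~\ref{lem: bound of P2} and~\ref{lem: bound of P3 and P4}; the paper merely packages your three linear inequalities into the single combination $2P_4-P_3-3P_2$, which it squeezes between $3$ and $\frac{10}{3}-\frac{k}{3}$ to force all equalities at once. Your appeal to $h^1(\mathcal{O}_X)=0$ to get $\chi(\mathcal{O}_X)\ge -1$ is valid but superfluous, since substituting $n=2$, $k=1$ back into $P_2\le 4$ already yields that bound.
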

\begin{proof}
By Lemma~\ref{lem: singularty of X} and \cite[(6.1), (6.4)]{Rei87}, the basket $B_X$ of $X$ is $\{a\times (1,2), b\times (1,3)\}$, for some integers $a\geq 0$ and $b\ge 1$. 
By Reid's Riemann--Roch formula \eqref{eq: RR} and the Kawamata--Viehweg vanishing theorem, 
\begin{align*}
 P_2(X){}&=\frac{1}{6} -3\chi(\mathcal{O}_X)+\frac{a}{4}+\frac{b}{3};\\
 P_3(X){}&=\frac{5}{6} -5\chi(\mathcal{O}_X)+\frac{a}{4}+\frac{2b}{3};\\
 P_4(X){}&=\frac{7}{3} -7\chi(\mathcal{O}_X)+\frac{a}{2}+\frac{2b}{3}.
\end{align*}
Then it is clear that
\[
2P_4(X)-P_3(X)-3P_2(X)=\frac{10}{3}-\frac{b}{3}\leq 3. 
\]
On the other hand, by Lemmas~\ref{lem: bound of P3 and P4} and \ref{lem: bound of P2},
\[
2P_4(X)-P_3(X)-3P_2(X)=P_4(X)-P_3(X)+P_4(X)-P_2(X)-2P_2(X)\geq 3. 
\]
Then all the above inequalities become equalities, which means that $b=1$, $P_2(X)=4$, $P_3(X)=7$, $P_4(X)=11$. 
Then we can solve the equations and get $\chi(\mathcal{O}_X)=-1$ and $a=2$.
Assertions (1) and (2) are proved.

Note that Reid's Riemann--Roch formula only depends on $K_X^3$, $\chi(\mathcal{O}_X)$ and $B_X$. 
These data are the same as those of a general hypersurface of degree $16$ in $\mathbb{P}(1,1,2,3,8)$ (see \cite[Table~3, No.~12]{Fle00}). 
So the Hilbert series of them also coincide and are certainly computable (see \cite[Theorem~3.4.4]{WPS}).
\end{proof}

\begin{prop}\label{prop: |mK|}
Let $X$ be a minimal $3$-fold of general type with $p_g(X)=2$ and $K_X^3=\frac{1}{3}$. Then
\begin{enumerate}
 \item $|2K_X|$ is not composed with a pencil;
 \item $|3K_X|$ defines a generically finite map;
 \item $|8K_X|$ defines a birational map.
\end{enumerate}
\end{prop}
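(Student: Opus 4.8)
The plan is to treat the three assertions separately, with (2) immediate and (3) carrying the real content. Throughout I keep the notation of Setting~\ref{Set up for canonical and pluricanonical maps}, so that $S$ is a general $(1,2)$-surface in the canonical pencil and $C$ is a general member of $\Mov|K_S|$, a smooth curve of genus $2$. I will use repeatedly the numerology $(\pi^*(K_X)|_S)^2=\frac13$ and $(\pi^*(K_X)|_S\cdot C)=\frac23$ from \eqref{eq: 4.2-1}--\eqref{eq: 4.2-2}, together with $P_2(X)=4$ from Proposition~\ref{prop: RR X16}. Assertion (2) is exactly Lemma~\ref{lem: bound of P3 and P4}(2), so nothing new is required.

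For (1), I would argue by contradiction, assuming $|2K_X|$ is composed with a pencil. Since $P_2(X)=4$, the bounds of Lemma~\ref{lem: bound of P2} are forced to be equalities, so $u_{2,0}=2$; re-running the dichotomy of Step~1 of that proof with $u_{2,0}=2$ shows that $\Mov(M_{2,0}|_S)$ is composed with the genus-$2$ pencil $|C|$ and satisfies $M_{2,0}|_S\lsgeq C$ (otherwise $(M_{2,0}|_S\cdot C)\ge 2$ contradicts \eqref{eq: 4.2-2}). Now distinguish two cases. If $|2K_X|$ is composed with the canonical pencil $|M|$, then $\Mov|2\pi^*(K_X)|\lsgeq 3S$ as $P_2(X)=4$, and intersecting with $\pi^*(K_X)^2$ gives $\frac23=2K_X^3\ge 3(\pi^*(K_X)|_S)^2=1$, a contradiction. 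If instead it is composed with a different pencil, with general fibre $T$, then $\Mov|2\pi^*(K_X)|\lsgeq 3T$ and $T|_S\lsgeq C$; intersecting with $\pi^*(K_X)\cdot S$ yields $\frac23=2(\pi^*(K_X)|_S)^2\ge 3(C\cdot\pi^*(K_X)|_S)=2$, again a contradiction. Hence $|2K_X|$ is not composed with a pencil.

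For (3), the strategy is the standard three-step restriction. First, since $p_g(X)=2$ the canonical pencil already separates two general surfaces $S',S''$, and because $|8K_X|$ dominates the multiples $8|K_X|$ the map $\Phi_{8K_X}$ separates them as well; thus it suffices to show $\Phi_{8K_X}|_S$ is birational for general $S$. Second, I would restrict to $S$: by the Kawamata--Viehweg vanishing theorem, exactly as in Lemma~\ref{lem: bound of P3 and P4}, one obtains $|8K_X||_S\lsgeq |K_S+\rounddown{D}|$ for a suitable nef and big $\mathbb{Q}$-divisor $D$ on $S$ with $\rounddown{D}\lsleq 7\pi^*(K_X)|_S$; since $2\pi^*(K_X)|_S\lsgeq \sigma_S^*(K_{S_0})\lsgeq C$ by Lemma~\ref{lem: 2K>K}, one checks that $\rounddown{D}-C$ is nef and big and that the system separates the fibres $C$ of the genus-$2$ pencil on $S$. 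Third, restricting once more to a general $C$ and applying vanishing on $S$, the system $|8K_X||_C$ dominates a complete linear system $|K_C+D_C|$ on the genus-$2$ curve $C$.

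The hard part is this last step: I must guarantee that the restricted degree on $C$ reaches the very-ampleness threshold $2g(C)+1=5$. The governing quantity is $8\cdot(\pi^*(K_X)|_S\cdot C)=\frac{16}{3}$, so after the unavoidable rounding one expects $\deg D_C\ge 3$ and hence $\deg(K_C+D_C)\ge 5$, which is very ample on a genus-$2$ curve and forces birationality along $C$. This count is genuinely sharp and delicate: for $m=7$ one only has $7\cdot\frac23=\frac{14}{3}<5$, the restricted degree drops to $4=\deg(2K_C)$, and $\Phi_{7K_X}|_C$ factors through the hyperelliptic $g^1_2$, becoming merely $2:1$. The whole difficulty is therefore to perform the Kawamata--Viehweg bookkeeping carefully enough to secure degree $5$ (not $4$) on $C$ at $m=8$, thereby breaking the hyperelliptic involution; this is precisely where the value $\Vol(W)=\frac13$, encoded in $(\pi^*(K_X)|_S\cdot C)=\frac23$, enters decisively and explains why $8$ is the optimal multiple.
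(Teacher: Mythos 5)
Parts (1) and (2) of your proposal are fine. For (2) you cite Lemma~\ref{lem: bound of P3 and P4}(2), which is exactly what the paper does. For (1) your argument is essentially the paper's: the paper observes that a bicanonical pencil must coincide with the canonical pencil (since $\Mov|2K_X|\lsgeq\Mov|K_X|$ and $S$ moves), so $2\pi^*(K_X)\geq 3S$ and $K_X^3\geq\frac32(\pi^*(K_X)|_S)^2=\frac12$; your extra ``different pencil'' case is logically vacuous but the numerics you give for it ($\frac23\geq 3(C\cdot\pi^*(K_X)|_S)=2$) are correct, so no harm done.

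The genuine gap is in (3). The paper does not prove birationality of $|8K_X|$ at all: it simply invokes \cite[Theorem~1.4]{Chen07}, where this is established for every minimal $3$-fold with $p_g\geq 2$ (that proof is a substantial part of \cite{Chen07}, using the technical birationality criteria there). You instead attempt a direct three-step restriction argument, but you stop exactly at the decisive point: you write that ``one expects $\deg D_C\ge 3$'' and that ``the whole difficulty is therefore to perform the Kawamata--Viehweg bookkeeping carefully enough,'' without performing it. That bookkeeping is the entire content of the statement, so as written the proof of (3) is not there. Moreover, the quantity you propose as governing the count, $8\cdot(\pi^*(K_X)|_S\cdot C)=\frac{16}{3}$ versus the threshold $5=2g(C)+1$, is not the relevant one: in the standard double-restriction one writes $8\pi^*(K_X)=K_W+7\pi^*(K_X)+(\text{exceptional})$ and must subtract $S$ (to restrict to $S$) and then $C$ (to restrict to $C$), together with fractional corrections from $Z_\pi$ and $E_\pi$; the degree of $D_C$ is controlled by something like $\bigl((7\pi^*(K_X)-S)|_S-C\bigr)\cdot C$, not by $8\xi$. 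Your heuristic comparison of $7\xi=\frac{14}{3}$ with $8\xi=\frac{16}{3}$ therefore does not reflect the actual computation, and the claim that the restricted degree at $m=7$ ``drops to $4$'' is asserted, not derived. To repair (3) you should either carry out the vanishing argument in full (including the separation of distinct fibers $S$, of distinct curves $C$ inside $S$, and of points on a single $C$) or, as the paper does, cite \cite[Theorem~1.4]{Chen07}.
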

\begin{proof}
If $|2K_X|$ is composed with a pencil, then it is the same pencil as $|K_X|$ and $2\pi^*(K_X)\ge 3S$ since $P_2(X)=4$. Thus by \eqref{eq: 4.2-1},
\[
K_X^3\ge \frac{3}{2}(\pi^*(K_X)|_S)^2\ge \frac{1}{2},
\]
which is a contradiction. This proves Assertion (1).


Assertion (2) follows from Lemma~\ref{lem: bound of P3 and P4}.
Assertion (3) follows from \cite[Theorem~1.4]{Chen07}.
\end{proof}

\section{Birational geometry of $3$-folds with $p_g=3$ and $\Vol=1$}\label{sec: pg3}

In this section, we study the birational geometry of a minimal $3$-fold $X$ of general type with $p_g(X)=3$ and $K_X^3=1$. 

\subsection{Singularities}\label{sec: 5.1}\

In this subsection, we study the Gorenstein index of $X$.

Take a birational modification $\mu: X'\to X$ as in Setting~\ref{set: Special modification} and keep the same notation. By \cite[Theorem~1.5(4)]{Chen07}, the canonical image of $X$ is $\mathbb{P}^2$. By \cite[Lemma~5.2]{CHP21}, $F$ is a $(4,4)$-surface. 

The geometry of $X$ is simpler than that in \S\ref{sec: pg2}.
\begin{lem}\label{lem: canonical rest pg3}
 Let $X$ be a minimal $3$-fold of general type with $p_g(X)=3$ and $K_X^3=1$. Keep the notation in Setting~\ref{set: Special modification}. Then 
 \begin{enumerate}
 \item $\mu^*(K_X)|_F\equiv \frac{1}{2} K_F\equiv \frac{1}{2}\sigma^*(K_{F_0})$;
 in particular, $F$ is minimal;
 \item $G|_F=0$, $Z_X=0$.
 \end{enumerate}
 
\end{lem}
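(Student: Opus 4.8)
The plan is to pin down the single $\mathbb{Q}$-divisor $L:=\mu^*(K_X)|_F$ on the surface $F$ by squeezing $L^2$ between an upper and a lower bound, and then to invoke the Hodge index theorem; both assertions of the lemma will then fall out of the resulting chain of equalities. This runs parallel to the $p_g=2$ computation (Lemma~\ref{lem: canonical deg}) but is cleaner. Throughout I keep the notation of Setting~\ref{set: Special modification}, and I use that $F$ is a general fibre of $f$, so that $F|_F\equiv 0$, together with the fact (from \cite[Lemma~5.2]{CHP21}) that $F$ is a $(4,4)$-surface, i.e. $(\sigma^*(K_{F_0}))^2=K_{F_0}^2=4$.

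First I would record the upper bound. Since $\mu^*(K_X)=F+Z$ with $F$ nef (base point free) and $Z$ effective, and since $\mu^*(K_X)$ is nef, we get
\[
1=K_X^3=\mu^*(K_X)^2\cdot(F+Z)\ge \mu^*(K_X)^2\cdot F=L^2,
\]
because $\mu^*(K_X)^2\cdot Z\ge 0$. Hence $L^2\le 1$, with equality precisely when $\mu^*(K_X)^2\cdot Z=0$.

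The heart of the matter is the lower bound, and this is the step I expect to be the main obstacle: a priori one would try to extract it from the fibration that the canonical system of $X$ cuts out on the $(4,4)$-surface $F$, which is delicate. The cleaner route, which I would pursue, is purely divisorial. From the defining relations of Setting~\ref{set: Special modification} one derives the identity $Z-E_\mu=G+\mu^*(Z_X)$ (an effective $\mathbb{Q}$-divisor), coming from $\mu^*(F_X)=E_\mu+F+G$ and $K_X=F_X+Z_X$. Restricting to $F$ and using $F|_F\equiv 0$ together with adjunction $K_F=(K_{X'}+F)|_F=L+E_\mu|_F$, one obtains
\[
2L-K_F=(Z-E_\mu)|_F=(G+\mu^*(Z_X))|_F\ge 0.
\]
Combining this with the always-valid inequality $K_F\ge \sigma^*(K_{F_0})$ yields $2L-\sigma^*(K_{F_0})\ge 0$, so, as both $2L$ and $\sigma^*(K_{F_0})$ are nef,
\[
(2L)^2\ge (2L)\cdot\sigma^*(K_{F_0})\ge (\sigma^*(K_{F_0}))^2=4.
\]

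Finally I would combine the two bounds: $4\ge 4L^2=(2L)^2\ge 4$ forces $L^2=1$ and equality throughout. By Lemma~\ref{lem: HIT}(1), applied to the nef divisors $2L$ and $\sigma^*(K_{F_0})$ (equal positive squares, effective difference), we get $2L=\sigma^*(K_{F_0})$; then $\sigma^*(K_{F_0})=2L\ge K_F\ge \sigma^*(K_{F_0})$ forces $K_F=\sigma^*(K_{F_0})$, so $F$ is minimal and $\mu^*(K_X)|_F=L=\frac{1}{2}K_F=\frac{1}{2}\sigma^*(K_{F_0})$, which is assertion (1). For assertion (2), the equality $2L=K_F$ means $(G+\mu^*(Z_X))|_F=2L-K_F=0$; since both summands are effective, $G|_F=0$ and $\mu^*(Z_X)|_F=0$, and the latter gives $Z_X=0$ by Lemma~\ref{lem: Z=0}.
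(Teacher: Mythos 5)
Your proposal is correct and is essentially the paper's own argument: the paper likewise restricts the identity $\mu^*(K_X+F_X)=K_{X'}+F+G$ to $F$ to get $2\mu^*(K_X)|_F=K_F+G|_F+\mu^*(Z_X)|_F\ge K_F\ge\sigma^*(K_{F_0})$, squeezes $(\mu^*(K_X)|_F)^2$ between $\frac14 K_{F_0}^2=1$ and $K_X^3=1$, and concludes via Lemma~\ref{lem: HIT}(1) and Lemma~\ref{lem: Z=0}. The only cosmetic difference is that you reach the key effective divisor through $Z-E_\mu=G+\mu^*(Z_X)$ rather than by restricting \eqref{eq: K+F>K+F} directly, which is the same computation.
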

\begin{proof}
 By \eqref{eq: K+F>K+F}, we have
\[
 K_F+G|_F=\mu^*(K_X+F_X)|_F.
\]
 Since $G$ is effective, we deduce that
\begin{align}
 2\mu^*(K_X)|_F=K_F+G|_F+\mu^*(Z_X)|_F\ge K_F\ge \sigma^*(K_{F_0}).\label{eq: zariski 2} 
\end{align}
Then $(\mu^*(K_X)|_F)^2\geq \frac{1}{4}K_{F_0}^2=1$.
On the other hand,
$(\mu^*(K_X)|_F)^2\leq K_X^3=1$. 
Hence $(\mu^*(K_X)|_F)^2=\frac{1}{4}K_{F_0}^2=1$.
 By Lemma~\ref{lem: HIT}(1), inequalities in \eqref{eq: zariski 2} become equalities.
 In particular, Assertion (1) holds and 
$G|_F=\mu^*(Z_X)|_F=0$. 
The last assertion follows from Lemma~\ref{lem: Z=0}. 
\end{proof}

\begin{lem}\label{lem: singularity of X pg3}
Let $X$ be a minimal $3$-fold of general type with $p_g(X)=3$ and $K_X^3=1$. 
 Suppose that $P$ is a non-Gorenstein singularity of $X$. Then $r_P=2$. Here $r_P$ is the Cartier index of $K_X$ at $P$.
\end{lem}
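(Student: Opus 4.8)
The plan is to follow the template of Lemma~\ref{lem: singularty of X}, but here the analysis collapses to something much simpler, since in the $p_g=3$ case we have $G|_F=0$. First I would invoke Lemma~\ref{lem: canonical rest pg3} to record the two facts I need: $G|_F=0$ and $Z_X=0$. Because $F$ is a smooth projective surface, the pair $(F,G|_F)=(F,0)$ is klt, so every hypothesis of Lemma~\ref{lem: plt} is satisfied and I may apply both of its parts.

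Applying Lemma~\ref{lem: plt}(1), I obtain that $(X,F_X)$ is plt, that $F_X$ is normal, and that
\[
K_F = K_F + G|_F = (\mu|_F)^*(K_{F_X}).
\]
The decisive step is to read this equality as saying that $\mu|_F\colon F\to F_X$ is a \emph{crepant} birational morphism from a smooth surface, i.e.\ every $\mu|_F$-exceptional curve has discrepancy $0$. Hence $F_X$ carries at worst canonical, that is Du Val, surface singularities. (If one wants to see this by hand: since $F$ is minimal by Lemma~\ref{lem: canonical rest pg3}(1), any curve contracted by $\mu|_F$ is $K_F$-trivial, and the genus formula then forces it to be a $(-2)$-curve, so the exceptional configuration is of ADE type.) The point is that $F_X$ is Gorenstein at every point, in sharp contrast with the $p_g=2$ situation where a $(-3)$-curve produced a $\frac{1}{3}(1,1)$ point.

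To conclude, let $P$ be a non-Gorenstein singularity of $X$. Since $Z_X=0$, Lemma~\ref{lem: plt}(2) applies and gives $P\in F_X$ together with the identification of $r_P$ with the Cartier index of $K_X|_{F_X}$ at $P$. By the previous paragraph $P$ is a Gorenstein point of $F_X$, so the final clause of Lemma~\ref{lem: plt}(2) immediately yields $r_P=2$, as claimed. I do not expect a genuine obstacle in this argument: once Lemmas~\ref{lem: canonical rest pg3} and \ref{lem: plt} are available, the only step requiring a moment of care is the passage from the crepant equality $K_F=(\mu|_F)^*(K_{F_X})$ to the Du Val (hence everywhere Gorenstein) conclusion for $F_X$, which is precisely what triggers the clean output of Lemma~\ref{lem: plt}(2).
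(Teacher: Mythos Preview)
Your proposal is correct and follows essentially the same route as the paper's own proof: invoke Lemma~\ref{lem: canonical rest pg3} to obtain $G|_F=0$ and $Z_X=0$, apply Lemma~\ref{lem: plt}(1) to deduce $K_F=(\mu|_F)^*(K_{F_X})$ and hence that $F_X$ has only Du Val (Gorenstein) singularities, and then conclude $r_P=2$ via the last clause of Lemma~\ref{lem: plt}(2). Your write-up simply unpacks in more detail the step from the crepant equality to the Du Val conclusion, which the paper states in a single line.
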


\begin{proof}
 By Lemma~\ref{lem: canonical rest pg3}, all conditions of Lemma~\ref{lem: plt} are satisfied. Moreover, $G|_F=0$ implies that $K_F=(\mu|_F)^*(K_{F_X})$. So $F_X$ has at worst Du Val singularities which are clearly Gorenstein, which implies that $r_P=2$ by Lemma~\ref{lem: plt}(2).
\end{proof}

\subsection{Geometry of pluricanonical systems}\

Take a birational modification $\pi: W\to X$ as in Setting~\ref{Set up for canonical and pluricanonical maps} and keep the same notation.
We may further assume that $\pi$ factorizes through the modification $\mu: X'\to X$ in \S\ref{sec: 5.1}. 
In this case, $S$ is a $(4,4)$-surface. By \cite[\S3.2]{Chen07}, $|S|_S|$ is composed with a free rational pencil and $S|_S\sim C$, where $C$ is a general fiber of $\psi: W\to \Sigma'$.


\begin{lem}\label{lem: P2 bound pg3}
Let $X$ be a minimal $3$-fold of general type with $p_g(X)=3$ and $K_X^3=1$. Then $P_2(X)\le 7$. Moreover, the equality holds only if $|2K_X|$ defines a generically finite map of degree $2$.
\end{lem}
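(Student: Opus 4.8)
The plan is to bound $P_2(X)=h^0(W,\rounddown{2\pi^*(K_X)})$ by filtering this space according to the order of vanishing along the general surface $S$, exactly as in the proof of Lemma~\ref{lem: bound of P2}. Before starting I would record the two intersection numbers replacing \eqref{eq: 4.2-1}--\eqref{eq: 4.2-2}: taking $\pi$ to factor through $\mu$, Lemma~\ref{lem: canonical rest pg3} makes $S$ a $(4,4)$-surface with $2\pi^*(K_X)|_S\equiv\sigma_S^*(K_{S_0})$ numerically, and together with $S|_S\sim C$ and the projection formula this gives $(\pi^*(K_X)|_S)^2=1$ and $(\pi^*(K_X)|_S\cdot C)=1$, where $C$ is a general fibre of the free rational pencil $|S|_S|$, so $C^2=0$. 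The telescoping identity coming from the restriction maps then reads
\[
P_2(X)=u_{2,0}+u_{2,-1}+h^0(W,M_{2,-2}),
\]
and it suffices to prove $u_{2,0}\le 4$, $u_{2,-1}\le 2$ and $h^0(W,M_{2,-2})\le 1$.

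The bottom term is easiest: if $h^0(W,M_{2,-2})\ge 2$ then $|M_{2,-2}|$ is a nontrivial movable system, so $((\pi^*K_X)^2\cdot M_{2,-2})>0$; since $2\pi^*(K_X)-2S\ge M_{2,-2}$ and $(\pi^*(K_X)|_S)^2=1$, intersecting $2\pi^*(K_X)=2S+(2\pi^*(K_X)-2S)$ with $(\pi^*K_X)^2$ yields $2K_X^3>2$, contradicting $K_X^3=1$. For the two $u$-terms I would, in each case, split according to whether the restricted movable system on $S$ is composed with the pencil $|C|$ or not. In the ``composed with $|C|$'' case one has $M_{2,0}|_S\ge (u_{2,0}-1)C$ (resp.\ $M_{2,-1}|_S\ge(u_{2,-1}-1)C$); intersecting with the nef divisor $\pi^*(K_X)|_S$ and using $M_{2,0}|_S\le 2\pi^*(K_X)|_S$ and $M_{2,-1}|_S\le (2\pi^*(K_X)-S)|_S$ together with the two intersection numbers above gives $u_{2,0}-1\le 2$ and $u_{2,-1}-1\le 1$ (indeed $u_{2,0}\le 3$ here). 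In the remaining case the restricted system is not composed with $|C|$, and I would instead bound $u_{2,0}\le h^0(S,M_{2,0}|_S)$ and $u_{2,-1}\le h^0(S,M_{2,-1}|_S)$, where $M_{2,0}|_S$ (resp.\ $M_{2,-1}|_S$) is a nef divisor dominated by $2\pi^*(K_X)|_S\equiv\sigma_S^*(K_{S_0})$ (resp.\ by $(2\pi^*(K_X)-S)|_S\equiv\sigma_S^*(K_{S_0})-C$), and then argue that $h^0$ of such a divisor is at most $p_g(S)=4$ (resp.\ at most $2$).

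This last estimate is where the real work lies, and it is the step I expect to be the main obstacle: since $2\pi^*(K_X)|_S$ is only numerically, not linearly, equivalent to $\sigma_S^*(K_{S_0})$, Kawamata--Viehweg vanishing does not apply directly, and I must exploit the very special geometry of the $(4,4)$-surface $S_0$, which lies on the Noether line $K^2=2p_g-4$. Concretely, its canonical map $\Phi_{|K_{S_0}|}$ is a generically finite morphism of degree $2$ onto a surface of minimal degree in $\bP^3$ (Horikawa's structure theorem for surfaces on the Noether line); combining this with the intersection bounds $(M_{2,0}|_S\cdot C)\le 2$ and $(M_{2,0}|_S\cdot\sigma_S^*(K_{S_0}))\le 4$ and a restriction-to-$C$ (Clifford-type) argument should rule out a large $h^0$ and yield $u_{2,0}\le 4$, $u_{2,-1}\le 2$. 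Summing the three bounds gives $P_2(X)\le 7$.

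Finally, for the ``moreover'' part, equality $P_2(X)=7$ forces $u_{2,0}=4$, $u_{2,-1}=2$ and $h^0(W,M_{2,-2})=1$. Since $u_{2,0}=4>3$, the ``composed with $|C|$'' case is excluded, so the restricted bicanonical system on $S$ is not composed with $|C|$; being $4$-dimensional and dominated by $\sigma_S^*(K_{S_0})$ with $h^0(\sigma_S^*(K_{S_0}))=4$, it must induce the canonical map of $S_0$, which has degree $2$. As the vanishing-order filtration along $S$ separates the members of the pencil inside $|K_X|$, the map $\Phi_{|2K_X|}$ is generically finite and its degree equals that of $\Phi_{|K_{S_0}|}$, namely $2$; this is the asserted conclusion that $|2K_X|$ defines a generically finite map of degree $2$.
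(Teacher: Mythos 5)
Your proposal takes a genuinely different route from the paper, and it has a real gap at its center. The paper's proof is short and global: it looks at the image $\Sigma_2$ of $\Phi_{|2K_X|}$ directly. If $\dim\Sigma_2=2$, the induced fibration factors through $\psi$, so $M_2|_{M_2}\equiv d_2C$ with $d_2\geq P_2(X)-2$, and $4K_X^3\geq d_2(\pi^*(K_X)\cdot C)\geq P_2(X)-2$ gives $P_2(X)\leq 6$. If $\dim\Sigma_2=3$, then $\deg\Phi_{|2K_X|}\geq 2$ because $\Phi_{|4K_X|}$ is not birational by \cite[Theorem~1.5(4)]{Chen07}, and $8K_X^3\geq\deg(\Phi_{|2K_X|})\deg(\Sigma_2)\geq 2(P_2(X)-3)$ gives $P_2(X)\leq 7$ together with the equality statement. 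No fine information about the surface $S$ is needed beyond $(\pi^*(K_X)\cdot C)\geq 1$.

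In your filtration argument, the step you yourself flag as ``where the real work lies'' is precisely the step you do not carry out, and the intersection-number counting you do supply only yields $u_{2,0}\leq 6$ and $u_{2,-1}\leq 4$ in the non-pencil case, which is not enough. Two comments on closing it. First, $u_{2,0}\leq 4$ is easier than you fear: the proof of Lemma~\ref{lem: canonical rest pg3} applies Lemma~\ref{lem: HIT}(1), which gives equality of divisors, so $2\pi^*(K_X)|_S=\sigma_S^*(K_{S_0})$ honestly (not just numerically) and $u_{2,0}\leq h^0(S,\sigma_S^*(K_{S_0}))=p_g(S_0)=4$ with no case division; your worry about numerical versus linear equivalence dissolves once you use the proof rather than the statement of that lemma. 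Second, $u_{2,-1}\leq 2$ genuinely needs Horikawa's structure theorem: since the canonical map of the $(4,4)$-surface $S_0$ is a degree-$2$ morphism onto a quadric in $\mathbb{P}^3$ with $K_{S_0}$ the pullback of a hyperplane, and $(K_{S_0}\cdot C_0)=2$ for $C_0=\sigma_{S*}C$, the sections of $K_{S_0}$ vanishing on $C_0$ are pullbacks of hyperplanes containing the image of $C_0$, a curve of degree $1$ or $2$, whence $h^0(S_0,K_{S_0}-C_0)\leq 2$. You gesture at this (``should rule out a large $h^0$'') but never produce it. Finally, in the ``moreover'' part, the identification $\deg\Phi_{|2K_X|}=\deg\Phi_{|K_{S_0}|}=2$ requires arguing that sections vanishing on $S$ cannot separate points of $S$ and that $|2K_X|$ separates distinct members of the net $|M|$, so that a general fiber of $\Phi_{|2K_X|}$ lies in a single $S$; this is true but asserted rather than proved. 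All of these can be repaired, but as written the proof is incomplete exactly where it matters.
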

\begin{proof}
Recall that $\Phi_{|2K_X|}$ denotes the rational map defined by $|2K_X|$.
 Denote by $\Sigma_2$ the image of $\Phi_{|2K_X|}$. Since the canonical image of $X$ is $\mathbb{P}^2$ by \cite[Theorem~1.5(4)]{Chen07}, 
 it is clear that $\dim \Sigma_2\ge 2$. 

 If $\dim \Sigma_2=2$, then $|2K_X|$ induces a fibration $\psi_2: W\to \Sigma'_2$ which factors through $\psi: W\to \Sigma'$ and $ \Sigma'_2$ is birational to $\Sigma'$. In particular, we can identify a general fiber of 
 $\psi_2$ with $C$. 
 So we may write
${M_2}|_{M_2}\equiv d_2 C$,
where $M_2$ is a general element in $\Mov|2\pi^*(K_X)|$ and $d_2\ge P_2(X)-2$. Then
\begin{align*}
 4K_X^3\ge (\pi^*(K_X)\cdot M_2\cdot M_2)\ge d_2(\pi^*(K_X)\cdot C)\ge d_2\ge P_2(X)-2,
\end{align*}
 where the third inequality follows from \cite[\S3.2]{Chen07} as $(\pi^*(K_X)\cdot C)=\xi\geq 1$. We deduce that $P_2(X)\le 6$ when 
 $\dim\Sigma_2=2$.

If $\dim \Sigma_2=3$, then by \cite[Theorem~1.5(4)]{Chen07}, $\Phi_{|4K_X|}$ is not birational. So $\deg(\Phi_{|2K_X|})\ge 2$. 
Then 
 \[
 8K_X^3\ge \deg(\Phi_{|2K_X|})\deg (\Sigma_2)\ge 2(P_2(X)-3),
\]
 where $\deg(\Sigma_2)\ge P_2(X)-3$. We conclude that $P_2(X)\le 7$. Moreover, when $P_2(X)=7$, the above inequalities become equalities. In particular, 
 $\deg(\Phi_{|2K_X|})=2$. 
\end{proof}

\begin{lem}\label{lem: P3 bound pg3}
Let $X$ be a minimal $3$-fold of general type with $p_g(X)=3$ and $K_X^3=1$. Then
 $P_3(X)\ge P_2(X)+6$. 
\end{lem}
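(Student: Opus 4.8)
The plan is to follow the template of Lemma~\ref{lem: bound of P3 and P4}(1), the analogous statement in the $p_g=2$ case. I would work in Setting~\ref{Set up for canonical and pluricanonical maps}, so that $S$ is a $(4,4)$-surface, $\pi$ factorizes through $\mu$, and by \cite[\S3.2]{Chen07} one has $S|_S\sim C$ with $|S|_S|$ a free rational pencil. Exactly as in the $p_g=2$ argument one writes
\[
P_3(X)=h^0(W,\rounddown{3\pi^*(K_X)}-S)+u_{3,0},
\]
and the inequality $h^0(W,\rounddown{3\pi^*(K_X)}-S)\ge P_2(X)$ (a routine consequence of the $\bQ$-effectivity of $\rounddown{\pi^*(K_X)}-S$ together with the superadditivity of $\rounddown{\ \cdot\ }$) reduces the whole lemma to the single estimate $u_{3,0}\ge 6$.

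To bound $u_{3,0}$ from below I would reuse the Kawamata--Viehweg vanishing argument behind \eqref{eq: 3K>KS+K}: the surjectivity of
\[
H^0(W,K_W+\roundup{\pi^*(K_X)}+S)\to H^0(S,K_S+\roundup{\pi^*(K_X)|_S})
\]
gives $|3K_W||_S\lsgeq |K_S+\roundup{\pi^*(K_X)|_S}|$, hence $u_{3,0}\ge h^0(S,K_S+\roundup{\pi^*(K_X)|_S})$. I would then strip off the fixed part: writing $\pi^*(K_X)=M+Z_\pi$ with $M|_S$ integral and $M|_S\sim S|_S\sim C$, and using $Z_\pi|_S\ge 0$, one gets $\roundup{\pi^*(K_X)|_S}\ge \roundup{M|_S}=M|_S$, so that
\[
u_{3,0}\ge h^0(S,K_S+M|_S)=h^0(S,K_S+C).
\]

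Finally I would compute $h^0(S,K_S+C)$ on the nose. From the restriction sequence
\[
0\to \OO_S(K_S)\to \OO_S(K_S+C)\to \OO_C(K_C)\to 0,
\]
where $(K_S+C)|_C=K_C$ by adjunction, together with $h^1(S,K_S)=q(S)=0$, one obtains $h^0(S,K_S+C)=p_g(S)+g(C)=4+g(C)$. Since $S$ is a surface of general type while $|C|$ is a free rational pencil (so $C^2=0$ and $S$ fibers over $\mathbb{P}^1$ with general fibre $C$), the fibres cannot have genus $0$ or $1$; hence $g(C)\ge 2$ and $h^0(S,K_S+C)\ge 6$, which is exactly the bound $u_{3,0}\ge 6$ sought above.

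The genus bound $g(C)\ge 2$ and the reduction $\roundup{\pi^*(K_X)|_S}\ge M|_S$ are both soft; the input that really has to be nailed down is the surface-theoretic data of the $(4,4)$-surface $S$, namely $p_g(S)=4$ and $q(S)=h^1(\OO_S)=0$, so that the restriction sequence computes $h^0(S,K_S+C)$ exactly. This is where I would expect to spend the care, while the remainder of the proof runs parallel to the $p_g=2$ computation.
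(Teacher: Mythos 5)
Your proposal is correct and follows essentially the same route as the paper: the Kawamata--Viehweg vanishing step giving $|3K_W||_S\lsgeq |K_S+\roundup{\pi^*(K_X)|_S}|\lsgeq |K_S+C|$, the restriction sequence with $h^1(\OO_S)=0$ (Horikawa) yielding $h^0(S,K_S+C)=p_g(S)+g(C)\geq 6$, and the count $P_3(X)\geq h^0(\rounddown{3\pi^*(K_X)}-S)+h^0(S,K_S+C)\geq P_2(X)+6$. The only differences are that you spell out the two soft steps ($g(C)\geq 2$ and $h^0(\rounddown{3\pi^*(K_X)}-S)\geq P_2(X)$) that the paper leaves implicit, and both are justified correctly.
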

\begin{proof}
Kepp the notation in Setting~\ref{Set up for canonical and pluricanonical maps}.
 By the Kawamata--Viehweg vanishing theorem, we have
 \[
 |3K_W||_S\lsgeq |K_W+\roundup{\pi^*(K_X)}+S||_S\lsgeq |K_S+\roundup{\pi^*(K_X)|_S}|.
 \]
 Since $\pi^*(K_X)|_S\ge S|_S=C$, we have
\begin{align}
 |3K_W||_S\lsgeq |K_S+C|.\label{eq: 3K>K+C}
\end{align}
 By \cite[Theorem~3.4]{Horikawa1}, $h^1(\mathcal{O}_S)=0$. By the exact sequence
 \[
 0\to \mathcal{O}_S(K_S)\to \mathcal{O}_S(K_S+C)\to \mathcal{O}_C(K_C)\to 0,
 \]
 we deduce that $h^0(S, K_S+C)=p_g(S)+g(C)\geq 6$. Thus \eqref{eq: 3K>K+C} implies that
 \[
 P_3(X)\geq h^0(3K_W-S)+h^0(S, K_S+C)\ge P_2(X)+6.
 \]
 The proof is completed.
\end{proof}
\begin{prop}\label{prop: RR X12}
Let $X$ be a minimal $3$-fold of general type with $p_g(X)=3$ and $K_X^3=1$. Then
\begin{enumerate}
\item $\chi(\mathcal{O}_X)=-2$, $P_2(X)=7$;
 \item the basket of $X$ is $B_X=\{2\times (1, 2)\}$;
 \item the Hilbert series of $X$ is given by
\[
\sum_{k\geq 0}h^0(X, kK_X)q^k=\frac{1-q^{12}}{(1-q)^3 (1-q^{2})(1-q^{6})}.
\]
\end{enumerate}
All the above data are the same as those of a general hypersurface of degree $12$ in $\mathbb{P}(1,1,1,2,6)$.
\end{prop}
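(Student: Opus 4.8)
The plan is to mirror the proof of Proposition~\ref{prop: RR X16}, but the argument will be shorter because the singularities are now under tighter control. First I would record that, by Lemma~\ref{lem: singularity of X pg3}, every non-Gorenstein point of $X$ has Cartier index $2$; hence no $(1,r)$ orbifold points with $r\ge 3$ can occur, and the Reid basket must take the form $B_X=\{a\times(1,2)\}$ for some integer $a\ge 0$. This is the crucial structural input: it collapses the two-parameter situation of the $p_g=2$ case into a single unknown $a$ alongside $\chi(\mathcal{O}_X)$.

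Next I would feed this basket into Reid's Riemann--Roch formula \eqref{eq: RR} with $K_X^3=1$, and use the Kawamata--Viehweg vanishing theorem to identify $P_m(X)=\chi(\mathcal{O}_X(mK_X))$ for $m\ge 2$. A direct computation of $l(2)$ and $l(3)$ for the basket $\{a\times(1,2)\}$ gives $l(2)=l(3)=\tfrac a4$, so that
\[
P_2(X)=\tfrac12-3\chi(\mathcal{O}_X)+\tfrac a4,\qquad
P_3(X)=\tfrac52-5\chi(\mathcal{O}_X)+\tfrac a4,
\]
and therefore $P_3(X)-P_2(X)=2-2\chi(\mathcal{O}_X)$.

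Now the two inequalities supplied by the previous lemmas pin everything down by a sandwich. The lower bound $P_3(X)\ge P_2(X)+6$ of Lemma~\ref{lem: P3 bound pg3} yields $2-2\chi(\mathcal{O}_X)\ge 6$, i.e.\ $\chi(\mathcal{O}_X)\le -2$. Conversely, the upper bound $P_2(X)\le 7$ of Lemma~\ref{lem: P2 bound pg3} forbids $\chi(\mathcal{O}_X)\le -3$: indeed, $\chi(\mathcal{O}_X)\le -3$ would force $P_2(X)\ge \tfrac12+9=\tfrac{19}{2}>7$. Hence $\chi(\mathcal{O}_X)=-2$. Substituting back gives $P_2(X)=\tfrac{13}{2}+\tfrac a4=\tfrac{26+a}{4}$, and the constraints that $P_2(X)$ be an integer and $P_2(X)\le 7$ leave only $a=2$, whence $P_2(X)=7$. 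This establishes (1) and the basket $B_X=\{2\times(1,2)\}$ of (2).

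Finally, for (3) I would invoke the fact that \eqref{eq: RR} depends only on $K_X^3$, $\chi(\mathcal{O}_X)$ and $B_X$; since these three data coincide with those of a general degree-$12$ hypersurface in $\mathbb{P}(1,1,1,2,6)$, the entire sequence of plurigenera, and hence the Hilbert series, agree, and the closed form is the standard one computable for that weighted hypersurface. I expect no serious obstacle here: all the genuine content sits in the two bounds of Lemmas~\ref{lem: P2 bound pg3} and \ref{lem: P3 bound pg3}, which are already in hand, together with the integrality squeeze. The one point demanding care is keeping the residue computation of $l(m)$ for the $(1,2)$ points correct, so that the sandwich between the lower and upper bounds is tight enough to isolate $\chi(\mathcal{O}_X)=-2$ and $a=2$ uniquely.
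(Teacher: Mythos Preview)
Your proposal is correct and follows essentially the same route as the paper: reduce the basket to $\{a\times(1,2)\}$ via Lemma~\ref{lem: singularity of X pg3}, write out $P_2$ and $P_3$ by Reid's Riemann--Roch, and sandwich using Lemmas~\ref{lem: P2 bound pg3} and \ref{lem: P3 bound pg3}. The only cosmetic difference is the bookkeeping of the squeeze: the paper folds both bounds into the single chain $14\ge 2P_2(X)=3(P_3(X)-P_2(X))-5+\tfrac{a}{2}\ge 13+\tfrac{a}{2}$ and then reads off $P_2(X)=7$ and $a\le 2$ before solving for $\chi$, whereas you first isolate $\chi(\mathcal{O}_X)=-2$ from $P_3-P_2=2-2\chi$ and then pin down $a$; both are equivalent rearrangements of the same two inequalities.
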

\begin{proof}
 By Lemma~\ref{lem: singularity of X pg3} and \cite[(6.1), (6.4)]{Rei87}, the basket $B_X$ of $X$ is $\{a\times (1,2)\}$ for some $a\geq 0$. By Reid's Riemann--Roch formula \eqref{eq: RR} and the Kawamata--Viehweg vanishing theorem, we have
\begin{align*}
 P_2(X){}&=\frac{1}{2}-3\chi(\mathcal{O}_X)+\frac{a}{4},\\
 P_3(X){}&=\frac{5}{2}-5\chi(\mathcal{O}_X)+\frac{a}{4}.
\end{align*}
Then by Lemmas~\ref{lem: P3 bound pg3} and \ref{lem: P2 bound pg3}, 
\[
14\geq 2P_2(X)=3P_3(X)-3P_2(X)-5+\frac{a}{2}\geq 13+\frac{a}{2}.
\]
So $P_2(X)=7$ and $a\leq 2$.
Then it is not hard to see that $a=2$ and $\chi(\mathcal{O}_X)=-2$ from the equation of $P_2(X)$.
Assertions (1) and (2) are proved.

Note that Reid's Riemann--Roch formula only depends on $K_X^3$, $\chi(\mathcal{O}_X)$ and $B_X$. 
These data are the same as those of a general hypersurface of degree $12$ in $\mathbb{P}(1,1,1,2,6)$ (see \cite[Table~3, No.~7]{Fle00}). 
So the Hilbert series of them also coincide and can be computed by \cite[Theorem~3.4.4]{WPS}.
\end{proof}
\begin{prop}\label{prop: |mK| pg3}
Let $X$ be a minimal $3$-fold of general type with $p_g(X)=3$ and $K_X^3=1$. Then
\begin{enumerate}
\item $|K_X|$ is not composed with a pencil; 
\item $|2K_X|$ defines a generically finite map of degree $2$;
\item $|6K_X|$ defines a birational map.
\end{enumerate}
\end{prop}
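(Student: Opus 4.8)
For (1) and (2) the work is already done by the preceding results. For (1), the canonical image of $X$ is $\mathbb{P}^2$ by \cite[Theorem~1.5(4)]{Chen07}, so the canonical map has $2$-dimensional image and $|K_X|$ is not composed with a pencil. For (2), Proposition~\ref{prop: RR X12}(1) gives $P_2(X)=7$, which is exactly the equality case of Lemma~\ref{lem: P2 bound pg3}; tracing the proof of that lemma, equality forces $\dim\Sigma_2=3$ and $\deg\Phi_{|2K_X|}=2$, so $|2K_X|$ is generically finite of degree $2$.

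For (3) I would argue as follows. Keep the notation of Setting~\ref{Set up for canonical and pluricanonical maps} with $\pi\colon W\to X$ factoring through $\mu$, so that $S$ is a general $(4,4)$-surface, $|S|_S|$ is the free rational pencil with $S|_S\sim C$, and a general fibre $C$ is a smooth curve of genus $g:=g(C)\ge 2$ (the bound being what was extracted in Lemma~\ref{lem: P3 bound pg3}). Since $S$ is minimal with $\pi^*(K_X)|_S\equiv\frac{1}{2} K_S$ by Lemma~\ref{lem: canonical rest pg3}(1), adjunction on $S$ gives $(\pi^*(K_X)|_S\cdot C)=\frac{1}{2}(K_S\cdot C)=g-1$. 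Let $\tau$ be the birational deck involution of the degree-$2$ map $\Phi_{|2K_X|}$ provided by (2). Because $3|2K_X|\subseteq|6K_X|$ and $2|K_X|\subseteq|2K_X|$, the map $\Phi_{|6K_X|}$ refines $\Phi_{|2K_X|}$ while $\Phi_{|K_X|}$ factors through $\Phi_{|2K_X|}$; hence a general fibre of $\Phi_{|6K_X|}$ is contained in a general fibre $\{x,\tau(x)\}$ of $\Phi_{|2K_X|}$, and these two points lie on one and the same general canonical fibre, namely on a general curve $C$. As $\tau\ne\mathrm{id}$ preserves the canonical fibration, $\tau|_C$ is a nontrivial involution and $\tau(x)\ne x$. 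Thus the birationality of $\Phi_{|6K_X|}$ is reduced to showing that $|6K_X|$ separates the two points $x$ and $\tau(x)$ on the general curve $C$.

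To establish the latter, I would apply Kawamata--Viehweg vanishing twice. Restricting first from $W$ to $S$ gives
\[
|6K_W||_S\ \lsgeq\ |K_S+\roundup{4\pi^*(K_X)|_S}|,
\]
and a second application, restricting from $S$ to $C$, shows that $|6K_X||_C$ dominates a complete system $|K_C+\Delta|$ on $C$, where $\Delta=\roundup{4\pi^*(K_X)|_S-C}|_C$ satisfies $\deg\Delta\ge (4\pi^*(K_X)|_S-C)\cdot C=4(g-1)$. Consequently $\deg(K_C+\Delta)\ge(2g-2)+4(g-1)=6(g-1)\ge 2g+1$ for every $g\ge2$, so $|K_C+\Delta|$ is very ample on the smooth curve $C$ and in particular separates $x$ and $\tau(x)$. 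Together with the reduction of the previous paragraph, this proves that a general fibre of $\Phi_{|6K_X|}$ is a single reduced point, i.e. $\Phi_{|6K_X|}$ is birational.

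The step I expect to be the main obstacle is the vanishing bookkeeping in the last part: one must absorb the round-up discrepancies and the $\mu$-exceptional contributions into $E_\pi$ so that both restriction maps are genuinely surjective and the system produced on $C$ really attains the asserted degree, all while keeping the general $C$ smooth and away from the relevant base loci. As a consistency check, the same computation carried out for $|4K_X|$ yields only degree $4(g-1)$ on $C$, which equals $4<2g+1=5$ in the expected case $g=2$; this is compatible with $\Phi_{|4K_X|}$ being non-birational by \cite[Theorem~1.5(4)]{Chen07} and identifies $m=6$ as the first exponent for which this argument goes through.
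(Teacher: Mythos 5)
Your proofs of (1) and (2) coincide with the paper's: (1) is exactly the citation of \cite[Theorem~1.5(4)]{Chen07}, and (2) is exactly the combination of $P_2(X)=7$ from Proposition~\ref{prop: RR X12} with the equality case of Lemma~\ref{lem: P2 bound pg3}. For (3) the paper simply invokes \cite[Theorem~1.2]{Chen03}, whereas you attempt a self-contained proof. Your reduction is sound: the deck involution $\tau$ of the degree-$2$ map $\Phi_{|2K_X|}$ exists because a degree-$2$ field extension is Galois, a general fibre of $\Phi_{|6K_X|}$ is contained in $\{x,\tau(x)\}$, and both points lie on the general canonical fibre $C$, so birationality does reduce to separating $x$ from $\tau(x)$ on $C$.

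The step that fails as written is the second vanishing. Surjectivity of $H^0(S, K_S+\roundup{4\pi^*(K_X)|_S})\to H^0(C, K_C+\Delta)$ requires $H^1(S, K_S+\roundup{4\pi^*(K_X)|_S-C})=0$, and Kawamata--Viehweg needs $4\pi^*(K_X)|_S-C=3\pi^*(K_X)|_S+Z_\pi|_S$ to be nef and big; nefness can fail, since $Z_\pi|_S$ may meet one of its own components (or a $\sigma_S$-exceptional curve) negatively while $\pi^*(K_X)|_S$ is trivial on it. The standard repair is to peel off more before restricting: since $\roundup{4\pi^*(K_X)|_S}-\roundup{3\pi^*(K_X)|_S}\geq \rounddown{\pi^*(K_X)|_S}\geq M|_S\sim C$, one has $|K_S+\roundup{4\pi^*(K_X)|_S}|\lsgeq |K_S+C+\roundup{3\pi^*(K_X)|_S}|$, and now vanishing is applied to the genuinely nef and big divisor $3\pi^*(K_X)|_S$. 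This yields $\deg\Delta\geq 3(\pi^*(K_X)|_S\cdot C)=3$ instead of your $4(g-1)$, hence $\deg(K_C+\Delta)\geq 2g+1$, still enough for very ampleness, so your scheme goes through after this adjustment (you correctly flagged this bookkeeping as the weak point). One further small inaccuracy: it is $F=\mu_*(S)$, not $S$ itself, that is minimal, so the identity from Lemma~\ref{lem: canonical rest pg3} reads $\pi^*(K_X)|_S\equiv\frac{1}{2}\sigma_S^*(K_{S_0})$ rather than $\frac{1}{2}K_S$; the value $(\pi^*(K_X)|_S\cdot C)=1$ is nevertheless correct, being squeezed between $\xi\geq 1$ and $(\pi^*(K_X)|_S)^2=1$.
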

\begin{proof}
Assertion (1) follows from 
\cite[Theorem~1.5(4)]{Chen07}. Assertion (2) follows from Lemma~\ref{lem: P2 bound pg3} and $P_2(X)=7$ in Proposition~\ref{prop: RR X12}. Assertion (3) follows from \cite[Theorem~1.2]{Chen03}.
\end{proof}

\section{Birational geometry of $3$-folds with $p_g=4$ and $\Vol=2$}\label{sec: pg4}
In this section, we study the birational geometry of a minimal $3$-fold $X$ of general type with $p_g(X)=4$ and $K_X^3=2$. 

Recall that $\Sigma$ denotes the canonical image of $X$ in $\mathbb{P}^{3}$.
By \cite[Theorem~1.5(5)]{Chen07}, we have $\dim\Sigma\ge 2$.

When $\dim\Sigma=3$, the geometry of $X$ is well-understood by Kobayashi \cite{Kob}. We summarize the result with more detail as the following.

\begin{prop}\label{prop: X pg4 1}
 Let $X$ be a minimal $3$-fold of general type with $p_g(X)=4$ and $K_X^3=2$. Suppose that the canonical map of $X$ is generically finite. Then
 \begin{enumerate}
 \item $X$ is Gorenstein; 
 \item the canonical map is a generically finite morphism to $\mathbb{P}^3$ of degree $2$;

 \item the canonical model of $X$ is a hypersurface of degree $10$ in $\mathbb{P}(1, 1, 1, 1, 5)$;

\item the Hilbert series of $X$ is given by
 \[
\sum_{k\geq 0}h^0(X, kK_X)q^k=\frac{1-q^{10}}{(1-q)^4(1-q^{5})}.
\]
 \end{enumerate} 
\end{prop}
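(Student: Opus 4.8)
The plan is to exploit that, since $p_g(X)=4$ and the canonical map $\Phi_{|K_X|}\colon X\dashrightarrow \bP^3$ is generically finite, its image $\Sigma$ is a three-dimensional subvariety of $\bP^3$, hence $\Sigma=\bP^3$. First I would fix a resolution $\pi\colon W\to X$ on which $\Mov|\pi^*(K_X)|$ is base point free, write $\pi^*(K_X)=M+Z$ with $M$ the nef free movable part and $Z\geq 0$ the fixed part, and let $\phi=\Phi_{|M|}\colon W\to\bP^3$ be the induced generically finite morphism, so that $M=\phi^*\OO_{\bP^3}(1)$. Since $\pi^*(K_X)$ and $M$ are nef and $Z$ is effective, the chain
\[
K_X^3=\pi^*(K_X)^3\geq (\pi^*(K_X)^2\cdot M)\geq (\pi^*(K_X)\cdot M^2)\geq M^3=\deg\phi\cdot\deg\bP^3=\deg\phi
\]
gives $\deg\phi\leq K_X^3=2$. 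The degree cannot be $1$: a degree-$1$ generically finite morphism onto $\bP^3$ would make $W$, and hence $X$, birational to $\bP^3$, contradicting that $X$ is of general type. Thus $\deg\phi=2$, proving the degree statement in (2); moreover all intermediate inequalities become equalities, so $(\pi^*(K_X)^2\cdot Z)=(\pi^*(K_X)\cdot M\cdot Z)=(M^2\cdot Z)=0$, which I would feed into the negativity lemma to show that the fixed part disappears on the canonical model and that $|K_X|$ is base point free there, yielding the morphism assertion.

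For the Gorenstein statement (1), which I expect to be the main obstacle, I would pass to the Stein factorization $W\xrightarrow{\psi} Y\xrightarrow{f}\bP^3$ of $\phi$, so that $f\colon Y\to\bP^3$ is a finite double cover. Being a double cover of the smooth variety $\bP^3$, $Y$ is automatically Gorenstein---it is a hypersurface in the total space of a line bundle $L$ on $\bP^3$ with $L^{\ot 2}\cong\OO_{\bP^3}(B)$ for its branch divisor $B$---and $K_Y=f^*(K_{\bP^3}+L)=f^*\OO_{\bP^3}(\deg L-4)$. The crux is to identify $Y$ with the canonical model $X_{\mathrm{can}}$: using the vanishing intersection numbers forced above together with the negativity lemma, I would show that $\psi^*K_Y$ and $\pi^*(K_X)$ agree, so that $Y$ and $X_{\mathrm{can}}$ are the same ample model; in particular $K_X$ is Cartier and $X$ is Gorenstein. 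Matching $K_Y^3=2$ with $K_Y=f^*\OO_{\bP^3}(\deg L-4)$ and $K_Y^3=2(\deg L-4)^3$ forces $\deg L=5$, hence $\deg B=10$ and $K_Y=f^*\OO_{\bP^3}(1)$. This is precisely the step where I would lean on Kobayashi's classification \cite{Kob} to control the branch locus and to guarantee that the resulting double cover has only canonical singularities. Since $f$ is finite and $\OO_{\bP^3}(1)$ is base point free, $|K_Y|=f^*|\OO_{\bP^3}(1)|$ is base point free and the canonical map of $X_{\mathrm{can}}$ is exactly $f$, completing (2).

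Finally, for (3) and (4): a double cover of $\bP^3$ branched along a surface $B$ of degree $10$ with $L=\OO_{\bP^3}(5)$ is precisely the weighted hypersurface $x_4^2=f_0(x_0,\dots,x_3)$ of degree $10$ in $\bP(1,1,1,1,5)$, where $x_4$ has weight $5$ and generates $H^0(X,5K_X)$; this gives (3) and shows $R(X)\cong\bC[x_0,\dots,x_4]/(x_4^2-f_0)$. For the Hilbert series (4), I would compute $f_*\OO_Y=\OO_{\bP^3}\oplus\OO_{\bP^3}(-5)$, which gives $h^1(\OO_X)=h^2(\OO_X)=0$ and hence $\chi(\OO_X)=1-p_g(X)=-3$; plugging $K_X^3=2$, $\chi(\OO_X)=-3$, and the empty Reid basket into the Riemann--Roch formula \eqref{eq: RR} together with Kawamata--Viehweg vanishing computes every $P_m(X)$ and yields $\sum_{k\geq 0}P_k(X)q^k=\frac{1-q^{10}}{(1-q)^4(1-q^5)}$, which is exactly the Hilbert series of the degree-$10$ hypersurface in $\bP(1,1,1,1,5)$.
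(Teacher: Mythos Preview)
Your proposal is correct and follows essentially the same route as the paper: both defer assertions (1) and (2) to Kobayashi \cite[Proposition~2.5]{Kob}, then deduce (3) from the double cover structure (branch divisor of degree $10$, hence the hypersurface $x_4^2+f_0=0$ in $\bP(1,1,1,1,5)$), and compute (4) via $\chi(\OO_X)=-3$ from $f_*\OO_Y=\OO\oplus\OO(-5)$ together with Reid's Riemann--Roch and the empty basket.

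The only difference is that you attempt to sketch an independent argument for (1) and (2) before invoking Kobayashi, whereas the paper simply cites \cite{Kob} outright. Your degree-$2$ computation via $K_X^3\ge M^3=\deg\phi$ and the Stein-factorization double cover are fine; the one step that is not fully justified in your sketch is the passage from the vanishing $(\pi^*(K_X)^2\cdot Z)=0$ to the conclusion that $|K_{X_{\mathrm{can}}}|$ is base point free (equivalently, that your $Y$ coincides with $X_{\mathrm{can}}$): the negativity lemma kills the divisorial fixed part on the canonical model, but ruling out lower-dimensional base loci and matching $\psi^*K_Y$ with $\pi^*(K_X)$ requires more care. Since you explicitly fall back on Kobayashi at precisely this juncture, your overall argument stands.
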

\begin{proof}
 Assertions (1) and (2) are due to \cite[Proposition~2.5]{Kob}. 
Denote by $Y$ the canonical model of $X$ and $\phi: Y\to \mathbb{P}^3$ the canonical map which is a morphism defined by $|K_Y|$. Then $\phi$ is finite as $K_Y$ is ample. Moreover, $\deg (\phi)=2$.
Denote by $D\subseteq \mathbb{P}^3$ the branch locus, then $K_Y=\phi^*(K_{\mathbb{P}^3}+\frac{1}{2}D)$ by the ramification formula. By computing the self-intersection number, we get $ D\in |\mathcal{O}_{\mathbb{P}^3}(10)|$. So $Y$ can be expressed as a hypersurface of degree $10$ in $\mathbb{P}(1, 1, 1, 1, 5)$ defined by $ x_4^2+f_0(x_0, x_1, x_2, x_3)=0$, where $[x_0:\dots: 
x_4]$ are coordinates of $\mathbb{P}(1, 1, 1, 1, 5)$ and $f_0$ is the defining equation of $D\subseteq \mathbb{P}^3$. 
Finally, to compute the Hilbert series, we can use either \cite[3.4.2]{WPS} or Reid's Riemann--Roch formula \eqref{eq: RR}. Here recall that $K_X^3=2$ and $B_X=\emptyset$, and $\chi(\OO_X)=-3$ which can be computed from the double cover construction. 
\end{proof}

Thus we only need to treat the case when $\dim\Sigma=2$. We recall the following properties from \cite[\S3.2]{Chen07}.

\begin{lem}\label{lem: pg 4 chen}
 Let $X$ be a minimal $3$-fold of general type with $p_g(X)=4$ and $K_X^3=2$. Suppose that the canonical map of $X$ is not generically finite. Let $\pi: W\to X$ be as in Setting~\ref{Set up for canonical and pluricanonical maps} and keep the same notation. 
 Denote by $C$ a general fiber of $\psi: W\to \Sigma'$. Then 
 \begin{enumerate}
 \item $(\pi^*(K_X)\cdot C)=1$;
 \item $C$ is a smooth projective curve of genus $2$;
 \item $S|_S\sim 2C$ which is a rational pencil;
 \item $\Sigma'\simeq \Sigma\subseteq \mathbb{P}^3$ is a nondegenerate surface of degree $2$, in particular, $\Sigma$ is either $\mathbb{P}(1,1,2)$ or $\mathbb{P}^1\times \mathbb{P}^1$.
 \end{enumerate}

\end{lem}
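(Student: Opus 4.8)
The plan is to extract everything from a single volume inequality obtained by restricting $\pi^*(K_X)$ first to a general surface $S\in|M|$ and then to a general fibre $C$ of $\psi$, after pinning down the factorization of the canonical map. First I would record the geometry of the canonical map. Since $p_g(X)=4$, the map $\Phi_{|K_X|}$ is given by a basis of $H^0(X,K_X)$ and so has nondegenerate image $\Sigma\subseteq\mathbb{P}^3$; as the canonical map is assumed not generically finite, $\dim\Sigma=2$ (which also follows from \cite[Theorem~1.5(5)]{Chen07}), and a nondegenerate surface in $\mathbb{P}^3$ satisfies $\deg\Sigma\ge2$. In Setting~\ref{Set up for canonical and pluricanonical maps} the movable part $M=\Mov|\rounddown{\pi^*(K_X)}|$ is base point free and $\gamma=\Phi_{|K_X|}\circ\pi=s\circ\psi$, where $\psi:W\to\Sigma'$ has connected fibres and $s:\Sigma'\to\Sigma$ is finite; write $\delta=\deg s\ge1$. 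Setting $L=s^*\OO_\Sigma(1)$, one has $M=\psi^*L$ and $L^2=\delta\deg\Sigma$. Because $\dim\Sigma'=2$, a general $S\in|M|$ is smooth and dominates a general $D'\in|L|$ via $\psi|_S:S\to D'$ with general fibre $C$, so that $C^2=0$ on $S$ while $S|_S=M|_S=(\psi|_S)^*(L|_{D'})\sim(\delta\deg\Sigma)\,C$.

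Next I would run the volume estimate. Since $\pi^*(K_X)\ge M\equiv S$ is nef,
\[
2=K_X^3=(\pi^*(K_X))^3\ge(\pi^*(K_X))^2\cdot S=(\pi^*(K_X)|_S)^2\ge(\pi^*(K_X)|_S\cdot M|_S)=(\delta\deg\Sigma)\,\xi,
\]
where $\xi=(\pi^*(K_X)\cdot C)$, and the last inequality uses $\pi^*(K_X)|_S=M|_S+Z_\pi|_S$ with $Z_\pi|_S\ge0$ together with $C^2=0$. Feeding in the lower bounds $\delta\ge1$, $\deg\Sigma\ge2$ and $\xi\ge1$ forces $2\ge\delta\deg\Sigma\,\xi\ge2$, so every inequality is an equality: $\deg\Sigma=2$, $\xi=1$, and $\delta=1$. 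Thus $\Sigma$ is an irreducible quadric, hence normal, and the finite birational $s$ is an isomorphism, giving $\Sigma'\simeq\Sigma$, which is $\mathbb{P}^1\times\mathbb{P}^1$ or $\mathbb{P}(1,1,2)$; this is (1) and (4). Substituting $\delta\deg\Sigma=2$ into the formula above yields $S|_S\sim2C$, and since $D'$ is a hyperplane section of a quadric it is rational, so $|C|$ is a rational pencil, proving (3).

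Finally, for (2) I would compute the genus of $C$ by adjunction on the smooth surface $S$: with $C^2=0$ one has $2g(C)-2=(K_S+C)\cdot C=K_S\cdot C$, and writing $K_S=(K_W+S)|_S$ with $K_W=\pi^*(K_X)+E_\pi$ and $S\cdot C=0$ reduces this to $2g(C)-2=\xi+(E_\pi\cdot C)=1+(E_\pi\cdot C)$; the bound $g(C)\ge2$ (otherwise easy addition would give $\kappa(W)\le\dim\Sigma'+\kappa(C)\le2$, contradicting general type) then pins down the exceptional contribution and yields $g(C)=2$. I expect the genuinely delicate inputs to be exactly the two lower bounds used above, namely $\deg\Sigma\ge2$ and, more importantly, $\xi=(\pi^*(K_X)\cdot C)\ge1$: the latter is not merely positivity of a nef divisor on a moving curve but requires the refined analysis of the canonically induced sub-fibration in \cite[\S3.2]{Chen07}, controlling how much of $\pi^*(K_X)$ concentrates along the fibres. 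This, rather than the formal intersection bookkeeping, is the main obstacle.
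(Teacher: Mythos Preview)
Your derivation of (1), (3), and (4) is correct and essentially matches the paper: both arguments rest on the single input $\xi=(\pi^*(K_X)\cdot C)\ge1$ from \cite[\S3.2]{Chen07} and then squeeze the chain $2=K_X^3\ge(\delta\deg\Sigma)\,\xi\ge2$ to equality. The paper actually cites \cite[\S3.2]{Chen07} wholesale for (1)--(3) and then deduces (4) from (3) via $S|_S\equiv\deg(s)\deg(\Sigma)\,C$, whereas you run the inequality first and read off (1), (3), (4) simultaneously; the content is the same.

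Your argument for (2), however, has a genuine gap. From adjunction on $S$ (equivalently, $K_C=K_W|_C$ since $C$ is a fibre of $\psi$) you correctly get
\[
2g(C)-2=\xi+(E_\pi\cdot C)=1+(E_\pi\cdot C),
\]
and easy addition gives $g(C)\ge2$, hence $(E_\pi\cdot C)\ge1$. But this is only a \emph{lower} bound: nothing in your chain of equalities controls $E_\pi$, which is independent of $Z_\pi$, so $(E_\pi\cdot C)$ could a priori be $1,3,5,\dots$, giving $g(C)=2,3,4,\dots$. The phrase ``pins down the exceptional contribution'' hides the missing upper bound. Ruling out $g(C)\ge3$ requires the further analysis in \cite[\S3.2]{Chen07}: roughly, a larger genus yields a stronger lower bound on $\xi$ (via vanishing and restriction to $C$), which then forces $K_X^3>2$. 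Since you already invoke \cite[\S3.2]{Chen07} for $\xi\ge1$, the simplest repair is to cite it for (2) as well, exactly as the paper does.
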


\begin{proof}
Assertions (1)--(3) are directly from \cite[\S3.2]{Chen07}. 
 For Assertion (4), just note that $S|_S\equiv\deg(s)\deg(\Sigma)C$ and $\deg(\Sigma)\geq 2$ as it is nondegenerate. So $\deg(\Sigma)=2$ and $\deg (s)=1$ by Assertion (3). By \cite[\S10]{Nagata}, $\Sigma$ is either $\mathbb{P}(1,1,2)$
or $\mathbb{P}^1\times \mathbb{P}^1$. In particular, $\Sigma$ is normal and hence $s$ is an isomorphism.
\end{proof}

By adopting the idea in \cite[Proof of Proposition~2.1]{HZ}, we have the following lemma.

\begin{lem}\label{lem: free pencil pg 4}
 Let $X$ be a minimal $3$-fold of general type with $p_g(X)=4$ and $K_X^3=2$. Suppose that the canonical map of $X$ is not generically finite. Then
 there exits a minimal $3$-fold $X_1$ birational to $X$, admitting a fibration $f_1: X_1\to \mathbb{P}^1$ whose general fibers are $(1,2)$-surfaces.
\end{lem}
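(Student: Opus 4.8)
The plan is to exploit the quadric structure of the canonical image to manufacture a pencil on $X$ that is composed with a pencil, to resolve it with Proposition~\ref{prop: new model}, and then to identify the resulting general fibre as a $(1,2)$-surface.

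First I would fix the modification $\pi\colon W\to X$ and the canonical fibration $\psi\colon W\to\Sigma'\cong\Sigma$ of Setting~\ref{Set up for canonical and pluricanonical maps}. By Lemma~\ref{lem: pg 4 chen} the surface $\Sigma$ is either $\mathbb{P}(1,1,2)$ or $\mathbb{P}^1\times\mathbb{P}^1$, the general fibre $C$ of $\psi$ is a smooth genus $2$ curve, $(\pi^*(K_X)\cdot C)=1$, and $S|_S\sim 2C$. In either case $\Sigma$ carries a base point free pencil of rulings, giving a morphism $\Sigma\to\mathbb{P}^1$; composing with $\psi$ and taking the Stein factorization yields a fibration $g\colon W\to\mathbb{P}^1$ whose general fibre $T$ is a smooth projective surface equipped with the induced genus $2$ fibration $\psi|_T\colon T\to\mathbb{P}^1$ having general fibre $C$. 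The pencil $|T|$ pushes forward to a sublinear system of $|mK_X|$ (for a suitable $m$) which is composed with a pencil on the $\mathbb{Q}$-factorial variety $X$, so Proposition~\ref{prop: new model} applies and produces a $\mathbb{Q}$-factorial terminal model together with a fibration onto $\mathbb{P}^1$ on which the corresponding movable part is base point free.

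The core computation is to show that the minimal model $T_0$ of $T$ is a $(1,2)$-surface, that is $K_{T_0}^2=1$ and $p_g(T_0)=2$. Since $\pi^*(K_X)|_T$ is nef and, as I will check, satisfies $(\pi^*(K_X)|_T)^2>0$, the surface $T$ is of general type and $T_0$ is well defined. To pin down $K_{T_0}^2$ I would write $K_T=(K_W+T)|_T=K_W|_T$ using $T^2\equiv 0$, and then restrict $\pi^*(K_X)$ to $T$: combining the relation between $S$ and $T$ coming from $H\equiv 2\ell$ on $\Sigma$ (with the appropriate correction over the vertex in the cone case), the identities $(\pi^*(K_X)|_S\cdot S|_S)=2(\pi^*(K_X)\cdot C)=2$ and $K_X^3=2$, and the Hodge index theorem (Lemma~\ref{lem: HIT}) to force the intermediate inequalities to be equalities, one obtains $(\pi^*(K_X)|_T)^2=1$ and hence $K_{T_0}^2=1$. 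For the geometric genus, the restriction of $|K_X|$ to $T$ already contains the genus $2$ pencil $|C|$, so $p_g(T_0)\ge 2$, while Noether's inequality $K_{T_0}^2\ge 2p_g(T_0)-4$ together with $K_{T_0}^2=1$ forces $p_g(T_0)\le 2$; thus $p_g(T_0)=2$. Horikawa's description of $(1,2)$-surfaces \cite{Horikawa} then confirms that $T_0$, a minimal surface of general type with $K^2=1$ and a genus $2$ canonical pencil, is exactly a $(1,2)$-surface.

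The main obstacle is the final step: upgrading the relatively minimal fibred model to an \emph{absolutely} minimal $3$-fold $X_1$ while retaining $f_1\colon X_1\to\mathbb{P}^1$. This is genuinely delicate, because the given minimal model $X$ carries no fibration onto $\mathbb{P}^1$, so $X_1$ must be a \emph{different} minimal model (a flop of $X$) on which the fibre class lies on a fibration face of the movable cone. Following the idea of \cite[Proof of Proposition~2.1]{HZ}, I would run the $K_W$-MMP relative to $\mathbb{P}^1$, whose termination is guaranteed by \cite{BCHM}, to reach a relative minimal model on which the general fibre is the minimal $(1,2)$-surface $T_0$, and then argue that, because $T_0$ is of general type, the only possible $K$-negative curves preventing absolute nefness are vertical flopping curves, which can be removed within the family. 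Verifying that no horizontal $K$-negative curve survives—equivalently, that the fibre class genuinely determines a morphism on some minimal model in the birational class—is the crux of the argument, and is precisely where the strategy of \cite{HZ} is adapted.
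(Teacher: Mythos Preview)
Your overall shape—pass to a ruling of the quadric $\Sigma$, obtain a fibration $W\to\mathbb{P}^1$ with fibre $T$, take a relative minimal model, and check that the general fibre becomes a $(1,2)$-surface—matches the paper. But two of your key steps have genuine gaps.

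\medskip

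\textbf{The $(1,2)$-surface identification.} From $(\pi^*(K_X)|_T)^2=1$ you cannot conclude $K_{T_0}^2=1$. Since $\pi^*(K_X)|_T$ is a nef subdivisor of $K_T$, Zariski decomposition only gives $K_{T_0}^2\ge(\pi^*(K_X)|_T)^2=1$, and without the upper bound your Noether-inequality argument for $p_g(T_0)\le 2$ collapses (e.g.\ $K_{T_0}^2=2$ would allow $p_g=3$). The paper avoids this trap entirely: it first uses \cite[Corollary~2.3]{Noether} (or \cite[Theorem~2.4(2)]{CJ} in the $\mathbb{P}^1\times\mathbb{P}^1$ case) to make $\pi^*(K_X)|_T-c\,\sigma^*(K_{T_0})$ $\mathbb{Q}$-effective for an explicit $c>0$, intersects with $C$ to force $(\sigma^*(K_{T_0})\cdot C)=1$, and then invokes \cite[Lemma~2.4]{EXPIII}, which says that a minimal surface of general type carrying a moving curve $C$ with $(K\cdot C)=1$ is a $(1,2)$-surface. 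Only \emph{after} this does one know $K_{T_0}^2=1$, and then Lemma~\ref{lem: HIT}(1) gives the equality $\pi^*(K_X)|_T=\sigma^*(K_{T_0})$.

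\medskip

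\textbf{Minimality of $X_1$.} You correctly flag this as the crux, but your plan (``only vertical flopping curves survive'') is not an argument. In the paper this step is one line: once $\pi^*(K_X)|_T=\sigma^*(K_{T_0})$ is established as above, \cite[Lemma~3.2]{Noether} says the relative minimal model $X_1/\mathbb{P}^1$ is already absolutely minimal. So the minimality is not an independent obstacle—it falls out of the same equality you need for the $(1,2)$-surface step, and both rest on the external inputs \cite[Corollary~2.3, Lemma~3.2]{Noether} and \cite[Lemma~2.4]{EXPIII} rather than on ad hoc MMP reasoning. (Incidentally, Proposition~\ref{prop: new model} is not the right tool here: it produces a model with $K_{X'}+F$ nef \emph{over $X$}, whereas what you want—and later correctly invoke—is a $K_W$-MMP \emph{over $\mathbb{P}^1$}.)
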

\begin{proof}
By Lemma~\ref{lem: pg 4 chen}, $\Sigma$ is either $\mathbb{P}(1,1,2)$ or $\mathbb{P}^1\times \mathbb{P}^1$. 

\medskip

{\bf Case 1.} $\Sigma\simeq \mathbb{P}(1,1,2)$.

In this case, there is a contraction $r: \mathbb{F}_2\to \mathbb{P}^3$ from the second Hirzebruch surface induced by the linear system $|s+2\ell|$ such that $\Sigma=r(\mathbb{F}_2)$. Here $\ell$ is the ruling of the natural fibration $p: \mathbb{F}_2\to \mathbb{P}^1$ and $s$ is the unique negative section. 

Replacing $W$ by its birational model, we may assume that there is a morphism $\varphi: W\to \mathbb{F}_2$ such that $\psi=r\circ\varphi$. Thus we obtain a fibration $f_W:=p\circ\varphi: W\to \mathbb{P}^1$ with a general fiber $F_W=\varphi^*(\ell)$. Let $\zeta: W\dashrightarrow X_1$ be the contraction to a relative minimal model over $\mathbb{P}^1$, where we have a relatively minimal fibration $f_1: X_1\to\mathbb{P}^1$ with a general fiber denoted by $F_1$. Up to a further birational modification of $W$, we may assume that $\zeta$ is a morphism. Thus $\sigma_1:=\zeta|_{F_W}: F_W\to F_1$ is just the contraction to its minimal model. 
We claim that $F_1$ is a $(1,2)$-surface and $X_1$ is minimal.

Note that by definition, 
\[
\pi^*(K_X)\geq \psi^*\mathcal{O}_\Sigma(1)\geq \varphi^*(2\ell)=2F_W.
\]
By \cite[Corollary~2.3]{Noether}, $\pi^*(K_X)|_{F_W}-\frac{2}{3}\sigma_1^*(K_{F_1})$ is $\mathbb{Q}$-effective. Note that $S|_{F_W}\sim C$. By Lemma~\ref{lem: pg 4 chen}(1), we have
\[
(\sigma_1^*(K_{F_1})\cdot C)\le \frac{3}{2} (\pi^*(K_X)|_{F_W}\cdot C)=\frac{3}{2}.
\]
Then $(\sigma_1^*(K_{F_1})\cdot C)=1$. Since $C$ is a moving curve on $F_W$, we conclude that $F_1$ is a $(1,2)$-surface by \cite[Lemma~2.4]{EXPIII}. Note that $K_{F_W}\ge \pi^*(K_X)|_{F_W}$ and by Lemma~\ref{lem: pg 4 chen}(1),
\[
(\pi^*(K_X)|_{F_W})^2\ge (\pi^*(K_X)|_{F_W}\cdot S|_{F_W})=(\pi^*(K_X)|_{F_W}\cdot C)=1=(\sigma_1^*(K_{F_1}))^2.
\]
We deduce that $\pi^*(K_X)|_{F_W}=\sigma_1^*(K_{F_1})$ 
by considering the Zariski decomposition of $K_{F_W}$ and Lemma~\ref{lem: HIT}(1). 
Thus $X_1$ is minimal by \cite[Lemma~3.2]{Noether}. We finish the proof in this case.

\medskip

{\bf Case 2.} $\Sigma\simeq \mathbb{P}^1\times \mathbb{P}^1$.

Denote by $p_1$ and $p_2$ the two natural projections of $\mathbb{P}^1\times\mathbb{P}^1$. For $i=1, 2$, write $f_{W,i}=p_i\circ \gamma: W\to \mathbb{P}^1$. Then $f_{W,i}$ is a fibration for each $i$. Let $T_i$ be a general fiber of $f_{W, i}$. Then 
\begin{align}
\pi^*(K_X)\ge \psi^*(\mathcal{O}_\Sigma(1))=T_1+T_2.\label{eq: K>T+T}
\end{align}
Since $K_X^3=2$, we may assume that $(\pi^*(K_X)|_{T_1})^2\le 1$. Same as in {Case 1}, we get a contraction $\zeta: W\to X_1$ to a relative minimal model of $f_{W,1}$, where we denote by $f_1: X_1\to \mathbb{P}^1$ the relatively minimal fibration and by $F_1$ its general fiber. Thus $\sigma_1:=\zeta|_{T_1}: T_1 \to F_1$ is just the contraction to its minimal model. We claim that $F_1$ is a $(1,2)$-surface and $X_1$ is minimal.

For sufficiently large $m$, we have
\[
|2mK_W||_{T_1}\lsgeq|m(K_W+T_1+T_2)||_{T_1}\lsgeq|m(K_{T_1}+{T_2}|_{T_1})|,
\]
where the first inequality is by \eqref{eq: K>T+T} and the second is by \cite[Theorem~2.4(2)]{CJ}.
Hence by considering the movable parts of both sides, we deduce that
 $\pi^*(2K_X)|_{T_1}- \sigma_1^*(K_{F_1})- {T_2}|_{T_1}$
is $\mathbb{Q}$-effective. 
Note that $T_2|_{T_1}=C$. Combining with Lemma~\ref{lem: pg 4 chen}(1) and taking the intersection with $\pi^*(K_X)|_{T_1}$, we get
\[
(\pi^*(K_X)|_{T_1}\cdot \sigma_1^*(K_{F_1}))\le 2 (\pi^*(K_X)|_{T_1})^2-(\pi^*(K_X)|_{T_1}\cdot C)\le 1.
\]
Since $\pi^*(K_X)|_{T_1}\ge C$ by \eqref{eq: K>T+T}, we have $(C\cdot \sigma_1^*(K_{F_1}))\le 1$. By \cite[Lemma~2.4]{EXPIII}, $F_1$ is a $(1,2)$-surface. 
Furthermore, by Lemma~\ref{lem: pg 4 chen}(1),
\[
1\ge (\pi^*(K_X)|_{T_1})^2\ge (\pi^*(K_X)|_{T_1}\cdot C)=1.
\]
Thus $(\pi^*(K_X)|_{T_1})^2=1$. By considering the Zariski decomposition of $K_{T_1}$ and Lemma~\ref{lem: HIT}(1), we have
$\pi^*(K_X)|_{T_1}=\sigma_1^*(K_{F_1})$. 
By \cite[Lemma~3.2]{Noether}, $X_1$ is minimal. The proof is completed.
\end{proof}

Now we can understand the birational geometry in the case that the canonical map is not generically finite (cf. Proposition~\ref{prop: X pg4 1}).

\begin{prop}\label{prop: singularity of X pg4}
 Let $X$ be a minimal $3$-fold of general type with $p_g(X)=4$ and $K_X^3=2$. Suppose that the canonical map of $X$ is not generically finite. Then 
 \begin{enumerate}
 \item $h^1(\mathcal{O}_X)=h^2(\mathcal{O}_X)=0$; in particular, $\chi(\mathcal{O}_X)=-3$;
 \item $X$ is Gorenstein;
 \item the Hilbert series of $X$ is given by
 \[
\sum_{k\geq 0}h^0(X, kK_X)q^k=\frac{1-q^{10}}{(1-q)^4(1-q^{5})};
\]

 \item 
 $|2K_X|$ defines a generically finite map of degree $2$;

 \item {$|5K_X|$ defines a birational map.}
 \end{enumerate}
\end{prop}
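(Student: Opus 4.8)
The plan is to reduce all five assertions to the two numerical facts $\chi(\OO_X)=-3$ and $B_X=\emptyset$, and to extract these from the auxiliary fibration $f_1\colon X_1\to\mathbb{P}^1$ furnished by Lemma~\ref{lem: free pencil pg 4}, whose general fibre $F_1$ is a minimal $(1,2)$-surface and whose total space $X_1$ is a minimal model birational to $X$; in particular $\chi(\OO_X)=\chi(\OO_{X_1})$ and $P_m(X)=P_m(X_1)$ for all $m$. Once $\chi(\OO_X)=-3$, $K_X^3=2$ and $B_X=\emptyset$ are known, Reid's formula~\eqref{eq: RR} yields every plurigenus and hence the Hilbert series of (3), which is precisely the one recorded in Proposition~\ref{prop: X pg4 1}; moreover $B_X=\emptyset$ is exactly the statement that $K_X$ is Cartier, i.e. that $X$ is Gorenstein, giving (2).

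First I would compute $h^i(\OO_X)$ by passing to a smooth model $g\colon W\to\mathbb{P}^1$ of $f_1$ and reading off the Leray spectral sequence. The sheaf $R^1 g_*\OO_W$ has generic rank $q(F_1)=0$, hence is torsion; as it accounts for the whole irregularity, $h^1(\OO_X)=h^1(\OO_W)=0$. For $h^2$, relative duality identifies $R^2 g_*\OO_W$ with $\mathcal{E}^\vee$, where $\mathcal{E}:=g_*\omega_{W/\mathbb{P}^1}$ is a nef vector bundle of rank $p_g(F_1)=2$ on $\mathbb{P}^1$. Writing $\mathcal{E}\cong\OO(a_1)\oplus\OO(a_2)$ with $a_1,a_2\ge 0$, a Riemann--Roch count along $g$ (using $h^1(\OO_X)=0$) gives $a_1+a_2=3-\chi(\OO_X)$ and $h^2(\OO_X)=\#\{i:a_i=0\}$; combined with $\chi(\OO_X)=-3+h^2(\OO_X)$ this already forces $h^2(\OO_X)\le 1$, the borderline case being $\mathcal{E}\cong\OO\oplus\OO(5)$ (equivalently $\chi(\OO_X)=-2$).

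Excluding this borderline case is the step I expect to be the main obstacle: the trivial summand of $\mathcal{E}$ must be shown to be incompatible with the bigness of $K_X$. Exploiting the equality $\pi^*(K_X)|_{F_W}=\sigma_1^*(K_{F_1})$ established in Lemma~\ref{lem: free pencil pg 4} and the positivity of the direct images $f_{1*}\OO_{X_1}(mK_{X_1})$, I would bound $P_2(X)$ from below and compare with the value $P_2(X)=7+l(2)$ predicted by~\eqref{eq: RR} when $\chi(\OO_X)=-2$; the split form $\mathcal{E}\cong\OO\oplus\OO(5)$ makes $P_2(X)$ too small to meet this lower bound, ruling out $h^2(\OO_X)=1$. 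This yields $\chi(\OO_X)=-3$ and hence $h^2(\OO_X)=0$, proving (1).

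With $\chi(\OO_X)=-3$ fixed, (2) follows by a Riemann--Roch matching in the spirit of Propositions~\ref{prop: RR X16} and~\ref{prop: RR X12}: formula~\eqref{eq: RR} now reads $P_2(X)=10+l(2)$ with $l(2)\ge 0$, and $l(2)=0$ if and only if $B_X=\emptyset$; a sharp upper bound $P_2(X)\le 10$, obtained from the generic finiteness of $\Phi_{|2K_X|}$ together with a degree estimate of its image and the integrality constraints from~\eqref{eq: RR} for $m=2,3$, then forces $l(2)=0$ and $B_X=\emptyset$. For the remaining assertions the fibration structure pins down the pluricanonical maps: $|2K_X|$ restricts on each general fibre $F_1$ to the degree-$2$ bicanonical map of the $(1,2)$-surface, and, since $2K_X$ also separates the fibres of $f_1$ while $\Sigma$ is the quadric surface of Lemma~\ref{lem: pg 4 chen}, the global map $\Phi_{|2K_X|}$ is generically finite of degree $2$, giving (4); the birationality of $|5K_X|$ in (5) then follows from the now-explicit plurigenera together with the effective results of \cite{Chen07, Chen03}.
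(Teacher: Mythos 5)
Your skeleton matches the paper's: both arguments pass through the $(1,2)$-surface fibration of Lemma~\ref{lem: free pencil pg 4}, reduce everything to $\chi(\mathcal{O}_X)=-3$ and $B_X=\emptyset$ via the identity $P_2(X)=\frac{1}{2}K_X^3-3\chi(\mathcal{O}_X)+l_2(X)$ together with a sharp bound $P_2(X)\le 10$, and then read off assertions (3)--(5). The difference is that the paper imports its two load-bearing inputs from Hu--Zhang \cite{HZ} --- Lemma~3.5 there for $h^1(\mathcal{O}_X)=h^2(\mathcal{O}_X)=0$, and Proposition~3.8 for $P_2(X)\le 10$ and for the degree-$2$ statement in (4) --- whereas you attempt to reprove them, and it is precisely at those two points that your proposal has genuine gaps.

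First, the exclusion of $\mathcal{E}\cong\mathcal{O}\oplus\mathcal{O}(5)$ (equivalently $h^2(\mathcal{O}_X)=1$, $\chi(\mathcal{O}_X)=-2$), which you yourself flag as the main obstacle, is not achieved by the mechanism you sketch. A trivial summand of $f_{1*}\omega_{X_1/\mathbb{P}^1}$ imposes no \emph{upper} bound on $P_2(X)$, which is governed by the rank-$4$ direct image of the bicanonical sheaf rather than by $\mathcal{E}$, so the claim that ``the split form makes $P_2(X)$ too small'' is unsupported; and since the basket is unknown at this stage, Reid's formula only gives $P_2(X)=7+l_2(X)$ with $l_2(X)\ge 0$ a priori unbounded, so no numerical contradiction is available. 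Even granting $P_2(X)\le 10$, the case $\chi(\mathcal{O}_X)=-2$ survives (it merely forces $l_2(X)\le 3$), so a separate geometric argument, as in \cite[Lemma~3.5]{HZ}, is indispensable. (Your derivation of $h^1(\mathcal{O}_X)=0$ also needs repair: a torsion sheaf on $\mathbb{P}^1$ can have nonzero $h^0$, so one should instead use Serre duality and Koll\'ar's torsion-freeness of $R^1f_{1*}\omega_{X_1}$.) Second, your route to $P_2(X)\le 10$ is too coarse: the degree estimate for a generically finite $\Phi_{|2K_X|}$ of degree at least $2$ onto a nondegenerate threefold $\Sigma_2\subseteq\mathbb{P}^{P_2(X)-1}$ gives only $16=8K_X^3\ge 2\deg(\Sigma_2)\ge 2(P_2(X)-3)$, i.e.\ $P_2(X)\le 11$, which does not force $l_2(X)=0$. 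The sharp bound of \cite[Proposition~3.8]{HZ} exploits the restriction of $|2K|$ to the $(1,2)$-surface fibres in an essential way, and its equality case is also what the paper uses to prove (4). Until these two inputs are supplied, assertions (1), (2) and (4) remain unproved.
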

\begin{proof}
 By Lemma~\ref{lem: free pencil pg 4}, after replacing by a birational model, we may assume that $X$ admits a fibration $f: X\to \mathbb{P}^1$ with a general fiber $F$ a $(1,2)$-surface. Now we are in the setting of \cite[\S3]{HZ}. Assertion (1) follows from \cite[Lemma~3.4]{HZ}.

Recall Reid's Riemann--Roch formula \eqref{eq: RR} for $P_2(X)$:
\[
P_2(X)=\frac{1}{2}K_X^3-3\chi(\mathcal{O}_X)+l_2(X).
\]
Here $l_2(X)\ge 0$ and $l_2(X)=0$ if and only if $X$ is Gorenstein. So $P_2(X)\ge 10$.
On the other hand,
since $K_X^3<\frac{4}{3}p_g(X)-\frac{17}{6}$, by \cite[Proposition~3.7]{HZ}, we have
\[
P_2(X)\le \rounddown{2K_X^3}+\rounddown{2K_X^3-\frac{5(p_g(X)-1)}{3}}+7= 10.
\]
So $P_2(X)=10$. Thus $l_2(X)=0$ and $X$ is Gorenstein. Assertion (2) is proved.

For Assertion (3), note that Reid's Riemann--Roch formula only depends on $K_X^3$, $\chi(\mathcal{O}_X)$ and $B_X$. 
These data are the same as those of a general hypersurface of degree $10$ in $\mathbb{P}(1,1,1,1,5)$ (see \cite[Table~3, No.~5]{Fle00}). 
So the Hilbert series of them also coincide and can be computed by \cite[Theorem~3.4.4]{WPS}.

For Assertion (4), note that the inequality in \cite[Proposition~3.7(1)]{HZ} is an equality now. The inequality in {Step 1} of \cite[Proof of Proposition~3.7]{HZ} is also an equality. In particular, $|2K_W||_F$ defines a generically finite map of degree at most $2$. Thus $|2K_X|$ defines a generically finite map of degree at most $2$. However, 
$|2K_X|$ does not define a birational map as $|2K_F|$ does not define a birational map on a $(1,2)$-surface $F$ by \cite[Lemma~2.1]{Horikawa}. So $|2K_X|$ defines a generically finite map of degree $2$.

Assertion (5) follows from \cite[Theorem~1.2]{Chen03}.
\end{proof}

\section{Characterization of weighted hypersurfaces}\label{sec: wt embed criterion}

The following theorem is a refinement of \cite{330-2}. It provides a criterion for embedding a polarized variety into a weighted projective space by numerical and geometric properties of this variety. 
In the statement, (1)(2) are 
numerical conditions which are usually easy to get, but (3)(4)(5) are geometric conditions which in general require hard work to get. 

\begin{thm}\label{thm: weighted embedding}
 Let $Y$ be a normal projective $3$-fold and let $H$ be a $\mathbb{Q}$-Cartier Weil ample divisor on $Y$. Suppose that there exists a well-formed quasismooth weighted hypersurface \[X_{d}\subseteq \mathbb{P}(1,a_1,a_2,a_3,a_4)\] with $1\leq a_1\leq a_2\leq a_3<a_4$ and $d=2a_4$ 
 such that 
 \begin{enumerate}
 \item $H^3=(\mathcal{O}_{X_d}(1))^3=\frac{2}{a_1a_2a_3}$ and 
 \item 
 $h^0(Y, kH)=h^0(X_d, \mathcal{O}_{X_d}(k))$ for any nonnegative integer $k$. 
 \end{enumerate}
 Suppose further that
 \begin{enumerate}[resume]
 \item $|a_2H|$ is not composed with a pencil;
 
 \item $|a_3H|$ defines a generically finite map, while either the degree of this map is greater than $1$, or $Y$ is not rational;
 \item $|a_4H|$ defines a birational map.
 \end{enumerate}
Then $Y$ is a hypersurface in $\mathbb{P}(1,a_1,a_2,a_3,a_4)$ defined by a weighted homogeneous polynomial $f$ of degree $d$, where 
\[f(x_0, \dots,
x_4)=x_4^2+f_0(x_0, x_1, x_2, x_3)\] in suitable homogeneous coordinates $[x_0:\dots:
x_4]$ of $\mathbb{P}(1,a_1,a_2,a_3,a_4)$; in other words, there is an isomorphism between graded $\mathbb{C}$-algebras
\[
\bigoplus_{k\geq 0}H^0(Y, kH)\simeq \mathbb{C}[x_0,\dots,
x_4]/(f),
\]
where $\mathbb{C}[x_0, \dots,
x_4]$ is viewed as a weighted polynomial ring with \[\wt(x_0, \dots,
x_4)=(1,a_1,a_2,a_3,a_4).\]
\end{thm}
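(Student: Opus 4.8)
The plan is to show that the graded $\mathbb{C}$-algebra $R:=\bigoplus_{k\ge 0}H^0(Y,kH)$ is isomorphic to $\mathbb{C}[x_0,\dots,x_4]/(f)$ with $f=x_4^2+f_0(x_0,x_1,x_2,x_3)$; since $H$ is ample, $Y\cong\Proj R$ will then be the asserted weighted hypersurface. By hypothesis (2) together with the formula for the Hilbert series of the quasismooth well-formed hypersurface $X_d$, the ring $R$ has Hilbert series $\frac{1-q^d}{(1-q)(1-q^{a_1})(1-q^{a_2})(1-q^{a_3})(1-q^{a_4})}$. Reading off the jumps of this series, I would first select homogeneous elements $x_0,\dots,x_4\in R$ of degrees $1,a_1,a_2,a_3,a_4$, where $x_0,\dots,x_3$ span the relevant low-degree pieces and $x_4$ is a genuinely new element in degree $a_4$ (its existence being forced because the numerator $-q^d$ only affects degrees $\ge d>a_4$). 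These elements determine a graded homomorphism $\Phi^*:\mathbb{C}[x_0,\dots,x_4]\to R$ and a rational map $\map:Y\dashrightarrow\mathbb{P}(1,a_1,a_2,a_3,a_4)$.

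Next I would extract the double-cover structure. Discarding $x_4$, the elements $x_0,x_1,x_2,x_3$ induce a rational map $\rho:Y\dashrightarrow\mathbb{P}(1,a_1,a_2,a_3)$ between $3$-folds. Hypothesis (3) prevents the degree-$a_2$ part from degenerating to a pencil and hypothesis (4) makes $\rho$ generically finite; the numerical identity $H^3=\frac{2}{a_1a_2a_3}=2\,(\OO_{\mathbb{P}(1,a_1,a_2,a_3)}(1))^3$ then bounds $\deg\rho\le 2$, while the clause ``degree $>1$ or $Y$ not rational'' excludes the birational case and forces $\deg\rho\ge 2$; hence $\deg\rho=2$. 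Consequently $x_4$ is integral of degree $2$ over the subring generated by $x_0,\dots,x_3$, satisfying a monic quadratic $x_4^2+c\,x_4+f_0=0$ with $c,f_0$ of degrees $a_4,2a_4$ in $x_0,\dots,x_3$. Since the weight $1$ occurs among $1,a_1,a_2,a_3$, the degree-$a_4$ piece of $\mathbb{C}[x_0,x_1,x_2,x_3]$ is nonzero, so completing the square $x_4\mapsto x_4+\tfrac12 c$ arranges $c=0$ and yields the relation $f=x_4^2+f_0$ of degree $d=2a_4$ lying in $\ker\Phi^*$.

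It then remains to prove that $\Phi^*$ is surjective and that $\ker\Phi^*=(f)$. For surjectivity I would use hypothesis (5): as $|a_4H|$ is birational, $\map$ maps $Y$ birationally onto its image $\overline Y\subseteq\mathbb{P}(1,a_1,a_2,a_3,a_4)$, so the subring $R'=\Image\Phi^*$ and $R$ share the same graded fraction field, while $R$ is integral over $R'$ through the quadratic relation above; combined with the normality of $Y$ and the quasismoothness and well-formedness of $X_d$, one upgrades this to $R'=R$, i.e. $x_0,\dots,x_4$ generate $R$. Granting surjectivity, $\Phi^*$ factors through a graded surjection $\overline\Phi:\mathbb{C}[x_0,\dots,x_4]/(f)\twoheadrightarrow R$. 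Because $f$ contains $x_4^2$ it is a nonzero, hence nonzerodivisor, element of the polynomial ring, so $\mathbb{C}[x_0,\dots,x_4]/(f)$ has Hilbert series $\frac{1-q^d}{\prod_i(1-q^{w_i})}$, coinciding with that of $R$. A surjection of graded vector spaces with equal finite-dimensional graded pieces is an isomorphism, so $\overline\Phi$ is an isomorphism and $\ker\Phi^*=(f)$. Taking $\Proj$ and using the ampleness of $H$ then identifies $Y$ with the degree-$d$ hypersurface $\{f=0\}\subseteq\mathbb{P}(1,a_1,a_2,a_3,a_4)$.

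The conceptual heart is the degree-$2$ cover structure of the second step, but I expect the main technical obstacle to be the generation statement: proving that no unexpected generators occur in any degree and that the kernel is principal. This is precisely where hypotheses (3), (4), (5) must be combined with a careful integral-closure and normality argument together with the degree-by-degree matching of Hilbert series, rather than with the geometry of any single linear system. Handling the cases where several of the weights $1,a_1,a_2,a_3$ coincide (as in $\mathbb{P}(1,1,1,1,5)$) requires additional bookkeeping in the initial choice of generators $x_0,\dots,x_3$.
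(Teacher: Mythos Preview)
Your overall architecture matches the paper's: choose generators $x_0,\dots,x_4$ in the correct degrees, establish that the projection to $\mathbb{P}(1,a_1,a_2,a_3)$ is a degree-$2$ cover, extract a quadratic relation $f=x_4^2+f_0$, and finish with a Hilbert-series comparison. The degree-$2$ argument and the final comparison are essentially as in the paper.

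The gap is in your surjectivity step. You claim that ``$R$ is integral over $R'$ through the quadratic relation above'' and then invoke normality of $Y$ and quasismoothness of $X_d$ to conclude $R'=R$. But the quadratic relation is an identity among elements \emph{already in} $R'$; it says nothing about elements of $R$ outside $R'$. Moreover, normality of $Y$ does not imply normality of the subring $R'$, which is what an integral-closure argument would need, and the quasismoothness of the reference hypersurface $X_d$ plays no role here beyond fixing the Hilbert series. As written, this step does not go through.

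The paper sidesteps this entirely by reversing your order of surjectivity and kernel. It first proves directly that $x_0,x_1,x_2,x_3$ are \emph{algebraically independent} in $R$: hypothesis~(3) gives that $x_0,x_1,x_2$ are independent (the image is not a curve), and hypothesis~(4) then forces $x_3$ to be independent of them (the image is $3$-dimensional). Combined with the degree-$2$ relation for $x_4$, this shows the image subring $\mathcal{R}:=\Image\Phi^*$ is already isomorphic to $\mathbb{C}[x_0,\dots,x_4]/(f)$, hence has Hilbert series $\frac{1-q^d}{\prod_i(1-q^{a_i})}$. Since this equals the Hilbert series of $R$ by hypothesis~(2), the inclusion $\mathcal{R}\subseteq R$ is an equality. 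Thus surjectivity is a \emph{consequence} of the Hilbert-series match, not a prerequisite, and no normality or integral-closure reasoning is needed. You should reorganize your argument along these lines; the algebraic independence of $x_0,\dots,x_3$ is the missing ingredient, and it is exactly where hypotheses~(3) and~(4) enter.
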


\begin{rem}
In Theorem~\ref{thm: weighted embedding},
 by \cite[3.4.2]{WPS}, we have the following formula:
\[
\sum_{k\geq 0}h^0(X_d, \mathcal{O}_{X_d}(k)) q^k= \frac{1-q^{d}}{(1-q)(1-q^{a_1})(1-q^{a_2})(1-q^{a_3})(1-q^{a_4})}.
\]
We will always use this formula to compute $h^0(Y, kH)$. In practice, we do not need to calculate the precise value of $h^0(Y, kH)$, we will often identify the value with the cardinality of a certain set in a combinatorial way instead.
\end{rem}

 The following lemma allows us to describe generators and relations in certain linear systems. 
\begin{lem}\label{lem: fghp}
Keep the notation in Theorem~\ref{thm: weighted embedding}.
For $0\leq i\leq 4$, take distinct general elements
 $g_i\in H^0(Y, a_iH)\setminus\{0\}$ (where $a_0=1$). 
For a positive integer $k$, denote 
\[
S_k=\{g_0^{s_0}g_1^{s_1}g_2^{s_2}g_3^{s_3}\mid s_0, \dots, s_3\in \mathbb{Z}_{\geq 0}, s_0+a_1s_1+a_2s_2+a_3s_3=k\}.
\]
Then 
\begin{enumerate}
 \item the set $\{g_0, g_1, g_2, g_3\}$ is algebraically independent in the graded algebra $R(Y, H)=\bigoplus_{k\geq 0}H^0(Y, kH);$

\item $H^{0}(Y, a_4H)$ is spanned by a basis $S_{a_4}\cup\{g_4\}.$
\end{enumerate}
\end{lem}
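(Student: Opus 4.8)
The plan is to deduce both statements from a single geometric fact: the rational map $\Phi\colon Y\dashrightarrow \mathbb{P}(1,a_1,a_2,a_3)$ defined by $[g_0:g_1:g_2:g_3]$ is dominant (equivalently generically finite, as both sides are $3$-dimensional). First I would record the Hilbert-series bookkeeping. Since $d=2a_4$, the generating function in the Remark factors as
\[
\sum_{k\ge 0}h^0(Y,kH)q^k=\frac{1+q^{a_4}}{(1-q)(1-q^{a_1})(1-q^{a_2})(1-q^{a_3})},
\]
so if $N_k:=\#S_k$ denotes the number of weighted monomials of degree $k$ in variables of weights $1,a_1,a_2,a_3$, then $h^0(Y,kH)=N_k+N_{k-a_4}$. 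In particular $h^0(Y,kH)=N_k$ for $0\le k<a_4$, while $h^0(Y,a_4H)=N_{a_4}+1$.

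Granting dominance of $\Phi$, statement (1) is immediate: a weighted-homogeneous relation $P(g_0,\dots,g_3)=0$ in $R(Y,H)$ would, after dividing by a power of $g_0$, produce a relation among the rational functions $y_i:=g_i/g_0^{a_i}\in\mathbb{C}(Y)$; but $\Phi$ dominant means $\mathbb{C}(y_1,y_2,y_3)$ has transcendence degree $3$, so these functions, and hence the $g_i$, are algebraically independent. Statement (2) then follows from the count: part (1) shows the $N_{a_4}$ monomials of $S_{a_4}$ are linearly independent, and since $h^0(Y,a_4H)=N_{a_4}+1$ they span a hyperplane, so a general $g_4\in H^0(Y,a_4H)$ lies off this hyperplane and $S_{a_4}\cup\{g_4\}$ is a basis.

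The heart of the argument is therefore proving that $\Phi$ is dominant, and here the plan is to show that $S_{a_3}$ is a basis of $H^0(Y,a_3H)$. Since $h^0(Y,a_3H)=N_{a_3}=\#S_{a_3}$ (as $a_3<a_4$), it is equivalent to prove that $S_{a_3}$ spans. Once spanning is known, the sublinear system $\langle S_{a_3}\rangle$ equals the complete system $|a_3H|$, whose associated map is generically finite by hypothesis (4); on the other hand this map is $v_{a_3}\circ\Phi$, where $v_{a_3}$ is the map of $\mathbb{P}(1,a_1,a_2,a_3)$ given by $\mathcal{O}(a_3)$, which is birational onto its image because the ratios of degree-$a_3$ monomials already recover $y_1,y_2,y_3$; hence $\Phi$ is dominant. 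I would establish the spanning by a d\'evissage on the weights: cutting by the general section $g_0\in H^0(Y,H)$ gives
\[
0\to H^0\big(Y,(k-1)H\big)\xrightarrow{\ \cdot g_0\ } H^0(Y,kH)\to H^0\big(D_0,kH|_{D_0}\big),
\]
with $D_0=\{g_0=0\}$, so that modulo $g_0\cdot H^0((k-1)H)$, which by induction on $k$ is spanned by the monomials with $s_0\ge 1$, one is reduced to spanning $H^0(D_0,kH|_{D_0})$ by the restricted monomials in $g_1,g_2,g_3$. Iterating down to a general curve reduces everything to the non-degeneracy of the restricted maps, which is exactly what hypotheses (3) (the image is not a curve) and (4) (the image is $3$-dimensional) supply after restriction.

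The main obstacle is the apparent circularity between ``the monomials $S_k$ span'' and ``$g_0,\dots,g_3$ are algebraically independent'', since for $k<a_4$ these are equivalent via the equality $h^0(Y,kH)=\#S_k$; this is precisely what forces the inductive d\'evissage above rather than a one-step comparison with the complete linear systems. The second delicate point is ensuring that the auxiliary general members $D_0=\{g_0=0\}$, and the curves obtained by further cuts, are integral, so that the restriction sequence is exact and conditions (3)--(4) genuinely descend; when some intermediate linear system is small (for instance $h^0(Y,H)=1$ when $a_1\ge 2$) one cannot invoke Bertini directly and must instead cut by a general section of the first positive-dimensional system, or pass to a resolution on which the relevant movable parts are base point free. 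I do not expect to need hypothesis (5) for this lemma; it enters only later, to force birationality of the eventual embedding.
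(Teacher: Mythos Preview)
Your reduction to showing that $\Phi\colon Y\dashrightarrow\mathbb{P}(1,a_1,a_2,a_3)$ is dominant is exactly right, and your Hilbert-series bookkeeping and the deduction of (1) and (2) from dominance are both fine. The gap is in the d\'evissage you propose for showing that $S_{a_3}$ spans $H^0(Y,a_3H)$. You correctly flag the two obstacles yourself, but neither is actually resolved: the restriction maps $H^0(Y,kH)\to H^0(D_0,kH|_{D_0})$ need not be surjective (so it is unclear what ``spanning $H^0(D_0,kH|_{D_0})$'' buys you), the integrality of the successive cuts is not guaranteed when the low-degree systems are small, and most importantly hypotheses (3) and (4) are statements about maps from $Y$, not from $D_0$ or a curve in $D_0$, so there is no reason they descend. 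The vague remedy of ``passing to a resolution on which the relevant movable parts are base point free'' does not by itself produce the needed statements on the restrictions.

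The paper sidesteps the d\'evissage entirely by bootstrapping algebraic independence of the \emph{subsets} $\{g_0,g_1\}\subset\{g_0,g_1,g_2\}\subset\{g_0,g_1,g_2,g_3\}$ directly on $Y$. Since $h^0(Y,a_1H)\ge 2$, the pair $\{g_0,g_1\}$ is independent. Hypothesis~(3) says the map defined by $|a_2H|$ has image of dimension $\ge 2$, so a general $g_2\in H^0(Y,a_2H)$ is transcendental over $\mathbb{C}(g_0,g_1)$; hence $\{g_0,g_1,g_2\}$ is algebraically independent. This already gives linear independence of the monomials $S'_k$ in $g_0,g_1,g_2$ only, and since $h^0(Y,a_3H)=|S'_{a_3}|+1$, a general $g_3$ completes $S'_{a_3}$ to a basis $S_{a_3}$ of $H^0(Y,a_3H)$. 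Now hypothesis~(4), applied to the complete system $|a_3H|$, gives dominance of $\Phi$ and hence independence of all four. The point you were missing is that one should prove linear independence of the monomials (via algebraic independence of a smaller set) rather than spanning; independence plus the dimension count $h^0=|S'_{a_3}|+1$ forces the basis, and no restriction sequences are needed.
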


Here we remark that $R(Y, H)$ is viewed as a subalgebra of $\mathbb{C}(Y)[u]$, where the variable $u$ is from the grading of $R(Y, H)$. So a rational function $g\in H^{0}(Y, kH)\subseteq \mathbb{C}(Y)$ is identified with $g u^{k}\in R(Y, H)$. But to simplify the notation, we will just write $g\in R(Y, H)$ as long as the grading of $g$ is clear in the context. 

\begin{proof}
First we show the following claim. 
\begin{claim}\label{claim: fgh}
 The set $\{g_0, g_1, g_2\}$ is algebraically independent in $R(Y, H)$.
\end{claim}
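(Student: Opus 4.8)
The plan is to translate the algebraic independence of $\{g_0,g_1,g_2\}$ in the graded ring $R(Y,H)$ into the algebraic independence over $\mathbb{C}$ of the two degree-zero rational functions $x:=g_1/g_0^{a_1}$ and $y:=g_2/g_0^{a_2}$ in $\mathbb{C}(Y)$, and then to produce this independence from hypothesis (3) that $|a_2H|$ is not composed with a pencil. For the reduction I would use the identification $R(Y,H)\subseteq \mathbb{C}(Y)[u]$ with $g_i$ corresponding to $g_iu^{a_i}$ (and $a_0=1$): decomposing any polynomial relation among the $g_iu^{a_i}$ into its $u$-homogeneous pieces and dividing the piece of $u$-degree $m$ by $g_0^m$ turns it into a polynomial relation among the monomials $x^{s_1}y^{s_2}$. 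Hence $\{g_0,g_1,g_2\}$ is algebraically independent in $R(Y,H)$ if and only if $x$ and $y$ are algebraically independent over $\mathbb{C}$.

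Next I would introduce the subfield $L:=\mathbb{C}(s/s'\mid s,s'\in H^0(Y,a_2H)\setminus\{0\})\subseteq \mathbb{C}(Y)$, which is exactly the function field of the image of $\Phi_{|a_2H|}$; thus $\operatorname{tr.deg}_{\mathbb{C}}L=\dim\overline{\Image\Phi_{|a_2H|}}\geq 2$ by hypothesis (3). The point that makes the whole argument uniform is that $x$ already lies in $L$: since $a_1\leq a_2$, the product $g_1g_0^{a_2-a_1}$ is a nonzero element of $H^0(Y,a_2H)$ in the graded ring, and $x=(g_1g_0^{a_2-a_1})/g_0^{a_2}\in L$.

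With this in hand, let $E:=\mathbb{C}(x)$ and let $\overline{E}$ denote its algebraic closure inside $\mathbb{C}(Y)$, so $\operatorname{tr.deg}_{\mathbb{C}}\overline{E}\leq 1$. I would then observe that $V:=\{s\in H^0(Y,a_2H)\mid s/g_0^{a_2}\in\overline{E}\}$ is a $\mathbb{C}$-linear subspace of $H^0(Y,a_2H)$, because $\overline{E}$ is a field and $g_0^{a_2}$ is fixed. Since $\operatorname{tr.deg}_{\mathbb{C}}L\geq 2>1\geq\operatorname{tr.deg}_{\mathbb{C}}\overline{E}$, we have $L\not\subseteq\overline{E}$, so $V$ is a proper subspace. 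Fixing $g_0,g_1$ first and then choosing $g_2$ general, we get $g_2\notin V$, i.e. $y=g_2/g_0^{a_2}\notin\overline{E}$, so $y$ is transcendental over $E=\mathbb{C}(x)$ and $x,y$ are algebraically independent. By the reduction this proves the claim.

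The main obstacle is choosing the right invariant to compare against. The naive attempt—showing that $\Phi_{|a_2H|}$ factors rationally through $[g_0:g_1:g_2]\colon Y\dashrightarrow\mathbb{P}(1,a_1,a_2)$ via a weighted Veronese—would require $H^0(Y,a_2H)$ to be spanned by the monomials $S_{a_2}$, which fails precisely when $a_2=a_3$ (as for $\mathbb{P}(1,1,1,1,5)$), since then $g_3$ contributes an extra independent section and the factorization picks up an unwanted direction. Replacing that factorization by the transcendence-degree computation above, together with the generic choice of $g_2$, is what makes the argument work uniformly for every admissible weight vector; verifying that $V$ is a proper linear subspace is the crucial step.
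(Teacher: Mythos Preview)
Your approach is essentially the same as the paper's: both use hypothesis~(3) to get transcendence degree $\geq 2$ for the field cut out by $|a_2H|$, and then invoke the generality of $g_2$ to conclude. Your version is more explicit (the paper compresses the whole thing into two sentences), and your reduction to the algebraic independence of $x=g_1/g_0^{a_1}$ and $y=g_2/g_0^{a_2}$ in $\mathbb{C}(Y)$ is a clean way to phrase it.

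There is one small gap. From ``$y$ is transcendental over $E=\mathbb{C}(x)$'' you jump to ``$x,y$ are algebraically independent,'' but that implication fails when $x\in\mathbb{C}$: then $X-c$ is already a relation. You never check that $x$ is transcendental over $\mathbb{C}$. The paper handles this as its first line: $h^0(Y,a_1H)\geq 2$ (which follows from assumption~(2), since the coefficient of $q^{a_1}$ in the Hilbert series of $X_d$ is at least $2$), so a general $g_1$ is not a scalar multiple of $g_0^{a_1}$ and $x$ is non-constant. Insert this before or alongside your choice of $g_2$ and the argument is complete. Your observation that $x\in L$ is correct but does not by itself force $x\notin\mathbb{C}$.

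Your closing paragraph about the ``obstacle'' (the failure of a Veronese factorization when $a_2=a_3$) is an interesting side remark, but note that neither your actual argument nor the paper's needs any case distinction on the weights.
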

\begin{proof}
 
As $h^0(Y, a_1H)\geq 2$, $\{g_0, g_1\}$ is algebraically independent. 
By assumption, $|a_2H|$ is not composed with a pencil. So by the generality of $g_2$, the transcendental degree of $\mathbb{C}(g_0, g_1, g_2)$ as a subfield of the fractional field of $R(Y, H)$ is greater than $2$, which implies that $\{g_0, g_1, g_2\}$ is algebraically independent.
\end{proof}
By Claim~\ref{claim: fgh}, 
\[
S_k'=\{g_0^{s_0}g_1^{s_1}g_2^{s_2}\mid s_0, s_1, s_2\in \mathbb{Z}_{\geq 0}, s_0+a_1s_1+a_2s_2=k\}
\]
is linearly independent in $H^0(Y, kH)$ for any positive integer $k$. 
On the other hand, 
\[h^0(Y, a_3H)=h^0(X_d, \mathcal{O}_{X_d}(a_3))=|S_{a_3}|=|S'_{a_3}|+1.\]
 Then $S_{a_3}=S'_{a_3}\cup \{g_3\}$ is a basis of $H^0(Y, a_3H)$ by the generality of $g_3$. 
By assumption, $|a_3H|$ defines a generically finite map, so the transcendental degree of $\mathbb{C}(g_0, g_1, g_2, g_3)$ as a subfield of the fractional field of $R(Y, H)$ is greater than $3$, which implies that $\{g_0, g_1, g_2, g_3\}$ is algebraically independent. This proves Assertion (1).

In particular, $S_k$ is linearly independent in $H^0(Y, kH)$ for any positive integer $k$. So 
Assertion (2) follows from the computation that \[h^0(Y, a_4H)=h^0(X_d, \mathcal{O}_{X_d}(a_4))=|S_{a_4}|+1\] and the generality of $g_4$.
\end{proof}

 \begin{proof}[Proof of Theorem~\ref{thm: weighted embedding}]
Keep the notation in Lemma~\ref{lem: fghp}.
We can define $3$ rational maps: 
\begin{align*}
\Psi_{a_2}: {}&Y\dashrightarrow \mathbb{P}(1, a_1, a_2); \\
{}& P\mapsto [g_0(P):g_1(P):g_2(P)];\\
\Psi_{a_3}: {}&Y\dashrightarrow \mathbb{P}(1, a_1, a_2, a_3); \\
{}& P\mapsto [g_0(P):g_1(P):g_2(P):g_3(P)];\\
\Psi_{a_4}: {}&Y\dashrightarrow \mathbb{P}(1, a_1, a_2, a_3, a_4);\\
{}& P\mapsto [g_0(P):\dots:
g_4(P)].
\end{align*}
We claim that they have the following geometric properties.
\begin{claim}\label{claim: claim2}
\begin{enumerate}

\item $\Psi_{a_2}$ is dominant; $\Psi_{a_3}$ is dominant and generically finite of degree $2$;
\item $\Psi_{a_4}$ is birational onto its image;
\item let $Y'$ be the closure of $\Psi_{a_4}(Y)$ in $\mathbb{P}(1, a_1, a_2, a_3, a_4)$, then $Y'$ is defined by a weighted homogeneous polynomial $f$ of degree $d$, where 
\[f(x_0, \dots,
x_4)=x_4^2+f_0(x_0, x_1, x_2, x_3)\] in suitable homogeneous coordinates $[x_0:\dots:
x_4]$ of $\mathbb{P}(1, a_1, a_2, a_3, a_4)$.
\end{enumerate}\end{claim}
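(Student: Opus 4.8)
The plan is to read off the three assertions from the algebraic structure supplied by Lemma~\ref{lem: fghp}, translating everything into an analysis of the function field $\mathbb{C}(Y)$ and of the degree-$d$ graded piece of $R(Y,H)$, where $d=2a_4$.

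For Assertion (1), I would first observe that $\Psi_{a_2}$ and $\Psi_{a_3}$ are dominant because $\{g_0,g_1,g_2\}$ and $\{g_0,g_1,g_2,g_3\}$ are algebraically independent by Lemma~\ref{lem: fghp}(1) (and its internal Claim), so the closures of their images have dimensions $2$ and $3$ and fill the respective weighted projective spaces; since $\dim Y=3$, the map $\Psi_{a_3}$ is automatically generically finite. Writing $K_3=\mathbb{C}(g_1/g_0^{a_1},g_2/g_0^{a_2},g_3/g_0^{a_3})$ for the pullback of the function field of the target of $\Psi_{a_3}$, I would check that $K_3$ is also the image field of the complete-system map $\Phi_{|a_3H|}$ (its coordinate ratios recover each $g_i/g_0^{a_i}$ since $a_i\le a_3$), so that $\deg\Psi_{a_3}=\deg\Phi_{|a_3H|}=[\mathbb{C}(Y):K_3]$. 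Assertion (2) is then immediate: by Lemma~\ref{lem: fghp}(2) the sections $S_{a_4}\cup\{g_4\}$ form a full basis of $H^0(Y,a_4H)$, so $\Psi_{a_4}$ is exactly the map given by the complete linear system $|a_4H|$, which is birational onto its image by assumption (5); in particular $\mathbb{C}(Y)=K_3(t)$ with $t=g_4/g_0^{a_4}$.

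For the degree-$2$ statement and for Assertion (3) I would produce the defining equation by a dimension count in degree $d$. The monomials $S_d$, the products $g_4\cdot S_{a_4}$, and $g_4^2$ all lie in $H^0(Y,dH)$, giving $|S_d|+|S_{a_4}|+1$ elements, whereas assumption (2) together with the Hilbert series of $X_d$ recorded in the remark yields $h^0(Y,dH)=h^0(X_d,\mathcal{O}_{X_d}(d))=|S_d|+|S_{a_4}|$; hence there is a nontrivial relation $\alpha g_4^2+g_4B+C=0$, where $\alpha\in\mathbb{C}$ and $B,C$ are forms in $g_0,\dots,g_3$ of degrees $a_4$ and $d$. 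Dehomogenizing by $g_0^d$ turns this into $\alpha t^2+\beta t+\gamma=0$ over $K_3$. The crux, and the step I expect to be the main obstacle, is showing $\alpha\neq 0$: if $\alpha=0$ then $t\in K_3$, whence $\mathbb{C}(Y)=K_3(t)=K_3$ is the purely transcendental field on $3$ independent variables and $\deg\Psi_{a_3}=1$, so $Y$ would be rational with $|a_3H|$ of degree $1$, contradicting assumption (4) in both of its alternatives. Granting $\alpha\neq 0$, the quadratic shows $[\mathbb{C}(Y):K_3]\le 2$, while assumption (4) forces $[\mathbb{C}(Y):K_3]>1$, so $\deg\Psi_{a_3}=2$, completing Assertion (1).

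Finally, for Assertion (3) I would normalize $\alpha=1$ and complete the square: replacing $g_4$ by $g_4+\tfrac{1}{2}B\in H^0(Y,a_4H)$, equivalently performing the coordinate change $x_4\mapsto x_4-\tfrac{1}{2}B(x_0,\dots,x_3)$ on $\mathbb{P}(1,a_1,a_2,a_3,a_4)$, reduces the relation to $x_4^2+f_0(x_0,x_1,x_2,x_3)=0$ with $f_0=C$, so $f$ vanishes on $Y'=\overline{\Psi_{a_4}(Y)}$. To upgrade this containment to equality I would argue that $f$ is irreducible: a factorization would force $g_4\in\mathrm{span}(S_{a_4})$, contradicting that $g_4$ is the extra basis vector of $H^0(Y,a_4H)$ beyond $S_{a_4}$. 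Hence $\{f=0\}$ is an irreducible $3$-fold containing the irreducible $3$-fold $Y'$, and the two must coincide.
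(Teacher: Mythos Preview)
Your proposal is correct and reaches the same conclusions, but it reorders the logic and replaces the paper's geometric degree bound by a purely field-theoretic one. The paper first bounds $\deg\Psi_{a_3}\le 2$ via intersection theory: passing to a resolution where the moving parts $N_k$ of $|a_kH|$ are free, it computes $\deg\Psi_{a_3}=(N_1\cdot N_2\cdot N_3)\le a_1a_2a_3H^3=2$, and then uses assumption~(4) to get $\deg\Psi_{a_3}=2$; only afterwards does it find the degree-$d$ relation and rule out the linear-in-$x_4$ case by projecting $Y'$ birationally to $\mathbb{P}(1,a_1,a_2,a_3)$. You instead establish birationality of $\Psi_{a_4}$ first (by identifying its image function field with that of the full $|a_4H|$), write $\mathbb{C}(Y)=K_3(t)$, produce the relation $\alpha t^2+\beta t+\gamma=0$ by the same dimension count in degree $d$, and then let this quadratic do double duty: $\alpha\ne 0$ follows from assumption~(4), the quadratic gives $[\mathbb{C}(Y):K_3]\le 2$, and assumption~(4) again gives $>1$. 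Your route avoids resolutions and intersection numbers entirely, at the cost of proving Assertions (1)--(3) in a different order; the paper's route is more geometric and keeps the assertions logically independent. Both the irreducibility argument for $f$ you give (a factorization would force $g_4\in\operatorname{span}(S_{a_4})$) and the paper's alternative (a lower-degree or $x_4$-linear equation would make $Y'$ birational to weighted $\mathbb{P}^3$) are valid ways to conclude $Y'=(f=0)$.
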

\begin{proof}
(1) By Lemma~\ref{lem: fghp}, $\{g_0, g_1, g_2, g_3\}$ is algebraically independent, so $\Psi_{a_2}$ and $\Psi_{a_3}$ are dominant. In particular, $\Psi_{a_3}$ is generically finite by dimension reason.
The degree of $\Psi_{a_3}$ is the number of points in the fiber over a general point in $\mathbb{P}(1, a_1, a_2, a_3)$. After taking a resolution $\pi: W\to Y$ such that 
$N_{k}=\Mov|\rounddown{\pi^*(a_kH)}|$ is free for $k=1,2,3$, the degree is just the intersection number $(N_1\cdot N_2\cdot N_{3})$. So 
\[
\deg (\Psi_{a_3})=(N_1\cdot N_2\cdot N_{3})\leq a_1a_2a_3H^3=2.
\]
 On the other hand, it is clear that $\deg(\Psi_{a_3})>1$ by the assumption that either $|a_3H|$ defines a generically finite map of degree greater than $1$ or $Y$ is not rational.
 
 \medskip
 
(2) By assumption, $|a_4H|$ defines a birational map. As $g_4$ is general, it can separate two points in a general fiber of $\Psi_{a_3}$, so $\Psi_{a_4}$ is birational onto its image.

\medskip

(3) Note that $d=2a_4$ and $h^0(Y, dH)=|S_{2a_4}|+|S_{a_4}|$.
 On the other hand, 
\[S_{2a_4}\sqcup (S_{a_4}\cdot g_4)\sqcup \{g_4^2\}\subseteq H^0(Y, dH).\] 
So $S_{2a_4}\sqcup (S_{a_4}\cdot g_4)\sqcup \{g_4^2\}$ is linearly dependent in $H^0(Y, dH)$. In other words, there exists a weighted homogeneous polynomial $f(x_0,\dots,
x_4)$ of degree $d$ with $\text{wt}(x_0, \dots,
x_4)=(1,a_1,a_2,a_3,a_4)$ such that 
$f(g_0, \dots,
g_4)=0.$
So $Y'$ is contained in $(f=0)\subseteq \mathbb{P}(1, a_1,a_2,a_3,a_4)$. Note that $Y'$ is a hypersurface in $\mathbb{P}(1, a_1,a_2,a_3,a_4)$ by dimension reason.

We claim that $Y'=(f=0)$ and $x_4^2$ has nonzero coefficient in $f$.
Otherwise, either $Y'$ is defined by a weighted homogeneous polynomial of degree at most $d-1$, or $x_4^2$ has zero coefficient in $f$. In either case,
$Y'$ is defined by a weighted homogeneous polynomial $\tilde{f}$ of the form 
\[\tilde{f}(x_0, \dots,
x_4)=x_4\tilde{f}_1(x_0, x_1, x_2, x_3)+\tilde{f}_2(x_0, x_1, x_2, x_3).\]
Here $\tilde{f}_1\neq 0$ as $\{g_0, g_1, g_2, g_3\}$ is algebraically independent.
Then $Y'$ is birational to $\mathbb{P}(1,a_1,a_2,a_3)$ under the rational projection map
\begin{align*}
\mathbb{P}(1, a_1,a_2,a_3, a_4){}&\dashrightarrow \mathbb{P}(1, a_1,a_2,a_3);\\
[x_0:\dots :x_4]{}&\mapsto [x_0:\dots :x_3].
\end{align*}
But this contradicts the fact that $\deg (\Psi_{a_3})=2$.
So $Y'=(f=0)$ and $x_4^2$ has nonzero coefficient in $f$. After a suitable coordinate transformation, we may assume that $f=x_4^2+f_0(x_0, x_1, x_2, x_3)$. 
\end{proof}

Now go back to the proof of Theorem~\ref{thm: weighted embedding}.
By Claim~\ref{claim: claim2}, $f$ is the only algebraic relation on $g_0, \dots, 
g_4$. Denote $\mathcal{R}$ to be the graded sub-$\mathbb{C}$-algebra of $R(Y, H)$ 
generated by $\{g_0, \dots, 
g_4\}$. Then there is a natural isomorphism between graded $\mathbb{C}$-algebras
\[
\mathcal{R}\simeq \mathbb{C}[x_0,\dots,
x_4]/(x_4^2+f_0(x_0, x_1, x_2, x_3))
\]
by sending $g_i\mapsto x_i$ ($0\leq i\leq 4$) and the right-hand side is exactly the coordinate ring of $Y'$.
Write $\mathcal{R}=\bigoplus_{k\geq 0}\mathcal{R}_k$, where $\mathcal{R}_k$ is the homogeneous part of degree $k$. Then,
by \cite[3.4.2]{WPS}, 
\[
\sum_{k\geq 0}(\dim_\mathbb{C} \mathcal{R}_k)q^k= \frac{1-q^{d}}{(1-q)(1-q^{a_1})(1-q^{a_2})(1-q^{a_3})(1-q^{a_4})}.
\]
So $\dim_\mathbb{C}\mathcal{R}_k=h^0(X_d, \mathcal{O}_{X_d}(k))=h^0(Y, kH)$ for any $k\in \mathbb{Z}_{\geq 0}$, and hence the inclusion $\mathcal{R}\subseteq R(Y, H)$ is an identity.
 This implies that 
\[Y\simeq \Proj R(Y, H)=\Proj\mathcal{R}\simeq Y'. \]
This finishes the proof.
 \end{proof}

\section{Proofs of Theorem \ref{thm: pg=2}, Theorem \ref{thm: pg=3} and Theorem \ref{thm: pg=4}}\label{sec: proofs}

In this section, we prove Theorem \ref{thm: pg=2}, Theorem \ref{thm: pg=3} and Theorem \ref{thm: pg=4}. The proof will be divided into $4$ parts. 

\subsection{Plurigenera}
In this subsection, we determine the plurigenera.
\begin{proof}[Proof of Theorems~\ref{thm: pg=2}(1), \ref{thm: pg=3}(1), and \ref{thm: pg=4}(1)]
These are directly from Propositions~\ref{prop: RR X16}, \ref{prop: RR X12}, \ref{prop: X pg4 1}, and \ref{prop: singularity of X pg4}.
\end{proof}

\subsection{Weighted embedding}

In this subsection, we show that the varieties we consider can be embedded as weighted hypersurfaces by Theorem~\ref{thm: weighted embedding}.

\begin{proof}[Proof of Theorems~\ref{thm: pg=2}(2) and~\ref{thm: pg=3}(2)]
We may replace $W$ by its canonical model $Y$ and apply Theorem~\ref{thm: weighted embedding} to $H=K_Y$. 
Then Theorem~\ref{thm: pg=2}(2)
is a direct consequence of Propositions~\ref{prop: RR X16} and \ref{prop: |mK|}; Theorem~\ref{thm: pg=3}(2)
is a direct consequence of Propositions~\ref{prop: RR X12} and \ref{prop: |mK| pg3}.
\end{proof}
 
The proof of Theorem~\ref{thm: pg=4}(2) is slightly more complicated as Theorem~\ref{thm: weighted embedding} is not directly applicable.

\begin{proof}[Proof of Theorem~\ref{thm: pg=4}(2)]
We may replace $W$ by its canonical model $Y$.

Suppose that the canonical map of $Y$ is generically finite, then we get (2a) by Proposition~\ref{prop: X pg4 1}. 

Suppose that the canonical map of $Y$ is not generically finite. Then we are going to embed $Y$ into $\mathbb{P}(1,1,1,1,2,5)$. Theorem~\ref{thm: weighted embedding} is not directly applicable here as we need $2$ defining equations, but the proof is similar to that of Theorem~\ref{thm: weighted embedding} with some essential modification. 

Since $p_g(Y)=4$, we may take $\{g_0, g_1, g_2, g_3\}$ as a basis of $H^0(Y, K_Y)$. Take general elements 
$g_4\in H^0(Y, 2K_Y)$ and $g_5\in H^0(Y, 5K_Y)$. 
We regard $g_i$ as elements in $R(Y)=\bigoplus_{k\geq 0}H^0(Y, kK_Y)$.

By Lemma~\ref{lem: free pencil pg 4}, the canonical image of $Y$ in $\mathbb{P}^3$ is a surface of degree $2$, so $g_0, g_1, g_2, g_3$ satisfy a unique quadratic algebraic relation $q(g_0, g_1, g_2, g_3)=0$. Without loss of generality, we may assume that $q$ is of the form
\[
q(x_0, x_1, x_2, x_3)=x_3^2+q_0(x_0, x_1, x_2).
\]
In particular, $\{g_0, g_1, g_2\}$ is algebraically independent and $\mathbb{C}(g_0, \dots, g_3)$ is a field extension of $\mathbb{C}(g_0, g_1, g_2)$ of degree $2$. 
Then, for any $k\geq 0$,
\[
S_k=\{g_0^{s_0}g_1^{s_1}g_2^{s_2}g_3^{s_3}\mid s_0, \dots, s_3\in \mathbb{Z}_{\geq 0}, s_0+ s_1+ s_2+ s_3=k \text{ and } s_3\leq 1\}
\]
is linearly independent in $H^0(Y, kK_Y)$. 
By Proposition~\ref{prop: singularity of X pg4}(3),
\[
H^0(Y, 2K_Y)=10=|S_2|+1, 
\]
so $S_2\sqcup \{g_4\}$ forms a basis of $H^0(Y, 2K_Y)$. 
Since $|2K_Y|$ defines a generically finite map of degree $2$ by Proposition~\ref{prop: singularity of X pg4}(4), $\{g_0, g_1, g_2, g_4\}$ is algebraically independent.
Then $S_5\sqcup (S_3\cdot g_4)\sqcup (S_1\cdot g_4^2)$ is linearly independent in $H^0(Y, 5K_Y)$. 
By Proposition~\ref{prop: singularity of X pg4}(3),
\[
H^0(Y, 5K_Y)=57=|S_5|+|S_3|+|S_1|+1, 
\]
hence $S_5\sqcup(S_3\cdot g_4)\sqcup (S_1\cdot g_4^2)\sqcup \{g_5\}$ forms a basis of $H^0(Y, 5K_Y)$.

Now we can consider $2$ rational maps: 
\begin{align*}
\Psi_{2}: {}&Y\dashrightarrow \mathbb{P}(1,1,1,1,2); \\
{}& P\mapsto [g_0(P): \dots : 
g_4(P)];\\
\Psi_{5}: {}&Y\dashrightarrow \mathbb{P}(1,1,1,1,2,5); \\
{}& P\mapsto [g_0(P):\dots : 
g_5(P)].
\end{align*}
Then $\Psi_{2}$ is generically finite of degree $2$ as $|2K_Y|$ defines a generically finite map of degree $2$; moreover, as $|5K_Y|$ defines a birational map by Proposition~\ref{prop: singularity of X pg4}(5), $g_5$ separates fibers of $\Psi_{2}$ and hence $\Psi_{5}$ is birational onto its image.

By Proposition~\ref{prop: singularity of X pg4}(3),
\[
h^0(Y, 10K_Y)=342=\sum_{i=0}^5|S_{10-2i}|+\sum_{j=0}^2|S_{5-2j}|.
\]
On the other hand, 
\[\bigsqcup_{i=0}^5(S_{10-2i}\cdot g_4^i)\sqcup \bigsqcup_{j=0}^2(S_{5-2j}\cdot g_4^jg_5)\sqcup \{g_5^2\}\subseteq H^0(Y, 10K_Y).\]
So there is
 a weighted homogeneous polynomial $f$ of degree $10$ such that
$f(g_0, \dots, g_5)=0$.

Let $Y'$ be the closure of $\Psi_{5}(Y)$ in $\mathbb{P}(1, 1, 1, 1, 2, 5)$. Then clearly $Y'\subseteq (q=f=0)$. We claim that $Y'=(q=f=0)$ and $x_5^2$ has nonzero coefficient in $f$.
Otherwise, $Y'$ is contained in $(q=\tilde{f}=0)$, where 
$\tilde{f}$ of the form 
\[\tilde{f}(x_0, \dots,
x_5)=x_5\tilde{f}_1(x_0, \dots,
x_4)+\tilde{f}_2(x_0, \dots,
x_4).\] 
Here $\tilde{f}$ is not divisible by $q$, which implies that $\tilde{f}_1\neq 0$. 
But then $Y'$ is birational to $\Psi_{2}(Y)$ 
 under the rational projection map
\begin{align*}
{}&\mathbb{P}(1, 1,1,1,2,5)\dashrightarrow \mathbb{P}(1, 1,1,1,2),
\end{align*}
which contradicts the fact that $\Psi_{2}$ is not birational.
So $Y'=(q=f=0)$ and $x_5^2$ has nonzero coefficient in $f$. After a suitable coordinate transformation, we may assume that $f=x_5^2+f_0(x_0, \dots,
x_4)$. 

Now $q$ and $f$ are the only algebraic relations on $g_0, \dots, 
g_5$. Denote $\mathcal{R}$ to be the graded sub-$\mathbb{C}$-algebra of $R(Y)$
generated by $\{g_0, \dots, 
g_5\}$. Then there is a natural isomorphism between graded $\mathbb{C}$-algebras
\[
\mathcal{R}\simeq \mathbb{C}[x_0, \dots,
x_5]/(q, f)
\]
by sending $g_i\mapsto x_i$ ($0\leq i\leq 5$) and the right-hand side is exactly the coordinate ring of $Y'$.
Write $\mathcal{R}=\bigoplus_{k\geq 0}\mathcal{R}_k$, where $\mathcal{R}_k$ is the homogeneous part of degree $k$. Then,
by \cite[3.4.2]{WPS}, 
\[
\sum_{k\geq 0}(\dim_\mathbb{C} \mathcal{R}_k)q^k= \frac{(1-q^{2})(1-q^{10})}{(1-q)^4(1-q^{2})(1-q^{5})}=\frac{1-q^{10}}{(1-q)^4(1-q^{5})}.
\]
So $\dim_\mathbb{C}\mathcal{R}_k=h^0(Y, kK_Y)$ for any $k\in \mathbb{Z}_{\geq 0}$ by Proposition~\ref{prop: singularity of X pg4}(3), and hence the inclusion $\mathcal{R}\subseteq R(Y)$ is an identity.
This implies that 
\[ Y\simeq \Proj R(Y) = \Proj\mathcal{R}\simeq Y'. \]
This finishes the proof of (2b).

Finally, for a $3$-fold $Y$ defined by $(q=f=0)$ in $\mathbb{P}(1, 1,1,1,2,5)$, we can view it as a specialization of $Y_t$ ($t\in\mathbb{C}^*$) which is defined by \[(q+tx_4=f+tg=0),\] where $g$ is a general weighted polynomial in $x_0,\dots, x_3, x_5$ of degree $10$.
In particular, $(g=0)\subseteq \mathbb{P}(1,1,1,1,5)$ is smooth. 
Then $Y_t$ turns out to be a $3$-fold defined by 
\[
f(x_0, x_1, x_2, x_3, -\frac{1}{t}q, x_5)+tg(x_0, x_1, x_2, x_3, x_5)=0
\]
in $\mathbb{P}(1,1,1,1,5)$ with coordinates $[x_0: x_1: x_2: x_3: x_5]$, which is actually smooth for general $t\neq 0$. 
\end{proof}

\subsection{simply-connectedness}
In this subsection, we prove the simply-connectedness.
\begin{proof}[Proof of Theorems~\ref{thm: pg=2}(3), \ref{thm: pg=3}(3), and \ref{thm: pg=4}(3)]
Let $W$ be as in the assumption. 
By (2) of each theorem, 
the canonical model $Y$ of $W$ is a specialization of general hypersurfaces in a certain weighted projective space. 
Namely, there is a flat morphism $\mathcal{Y}\to C$ to a smooth curve such that the central fiber $\mathcal{Y}_o$ at $o\in C$ is isomorphic to $Y$ and a general fiber $\mathcal{Y}_t$ for $t\neq o$ is a 
general weighted hypersurface.
By \cite[Lemma~2.8.1]{Kollar-shafa}, after shrinking $C$, we may assume that $\mathcal{Y}\setminus \mathcal{Y}_o\to C\setminus \{o\}$ is a topological fiber bundle. 
Note that $\mathcal{Y}$ is normal by \cite[Theorem~23.9]{Matsumura}. Hence by \cite[Theorem~2.12]{Kollar-shafa}, there is a surjective map $\pi_1(\mathcal{Y}_t)\to \pi_1(\mathcal{Y}_o)$ for general $t\in C$. 
Such $\mathcal{Y}_t$ is quasismooth by \cite{Fle00} and hence is simply-connected by \cite[Theorem~3.2.4]{WPS}. Hence $Y$ is also simply-connected. Since fundamental group is a birational invariant among projective klt varieties by \cite[Theorem~1.1]{Takayama}, we conclude the simply-connectedness of $W$.
\end{proof}

\subsection{Irreducibility and dimensions of moduli spaces}
In this subsection, we prove the irreducibility of $3$ moduli spaces and compute their dimensions.
\begin{proof}[Proof of Theorem~\ref{thm: pg=2}(4)]
 Write $\mathbb{P}=\mathbb{P}(1,1,2,3,8)$. By Theorem~\ref{thm: pg=2}(2), it is clear that $\mathcal{M}_{\frac{1}{3}, 2}$ is irreducible. 
It is unirational as it is dominated by an open subvariety of a rational variety (parametrized by coefficients). 
 Furthermore, 
 \[
 \dim \mathcal{M}_{\frac{1}{3}, 2}=h^0(\mathbb{P}, \mathcal{O}_{\mathbb{P}}(16))-1-\dim \Aut \mathbb{P}.
 \]
 By an easy computation, 
 $h^0(\mathbb{P}, \mathcal{O}_{\mathbb{P}}(16))=246$. By \cite[\S4]{Cox95} and \cite[Proposition~4.3]{Cox14}, we have
 \begin{align*}
 \dim \Aut (\mathbb{P})&=2h^0(\mathcal{O}_{\mathbb{P}}(1))+ h^0(\mathcal{O}_{\mathbb{P}}(2))+h^0(\mathcal{O}_{\mathbb{P}}(3))+h^0(\mathcal{O}_{\mathbb{P}}(8))-1\\
 &=56.
 \end{align*}
Therefore, $\dim \mathcal{M}_{\frac{1}{3},2}=189$.
\end{proof}

\begin{proof}[Proof of Theorem~\ref{thm: pg=3}(4)]
 Write $\mathbb{P}=\mathbb{P}(1,1,1,2,6)$. By Theorem~\ref{thm: pg=3}(2), it is clear that $\mathcal{M}_{1, 3}$ is irreducible. It is unirational as it is dominated by an open subvariety of a rational variety (parametrized by coefficients). Furthermore, 
 \[
 \dim \mathcal{M}_{1, 3}=h^0(\mathbb{P}, \mathcal{O}_{\mathbb{P}}(12))-1-\dim \Aut \mathbb{P}.
 \]
 By an easy computation, 
 $h^0(\mathbb{P}, \mathcal{O}_{\mathbb{P}}(12))=303$. By \cite[\S4]{Cox95} and \cite[Proposition~4.3]{Cox14}, we have
 \begin{align*}
 \dim \Aut (\mathbb{P})&=3h^0(\mathcal{O}_{\mathbb{P}}(1))+ h^0(\mathcal{O}_{\mathbb{P}}(2))+h^0(\mathcal{O}_{\mathbb{P}}(6)) -1 =66.
 \end{align*}
Therefore, $\dim \mathcal{M}_{1,3}=236$.
\end{proof}

\begin{proof}[Proof of Theorem~\ref{thm: pg=4}(4)]
 Write $\mathbb{P}=\mathbb{P}(1,1,1,1,5)$. By Theorem~\ref{thm: pg=4}(2), it is clear that $\mathcal{M}_{2, 4}$ is irreducible. It is unirational as it is dominated by an open subvariety of a rational variety (parametrized by coefficients). Furthermore, 
 \[
 \dim \mathcal{M}_{2, 4}=h^0(\mathbb{P}, \mathcal{O}_{\mathbb{P}}(10))-1-\dim \Aut \mathbb{P}.
 \]
 By an easy computation, 
 $h^0(\mathbb{P}, \mathcal{O}_{\mathbb{P}}(10))= 343$. By \cite[\S4]{Cox95} and \cite[Proposition~4.3]{Cox14}, we have
 \begin{align*}
 \dim \Aut (\mathbb{P})&=4h^0(\mathcal{O}_{\mathbb{P}}(1))+ h^0(\mathcal{O}_{\mathbb{P}}(5)) -1 = 72.
 \end{align*}
Therefore, $\dim \mathcal{M}_{2, 4}= 270$.
\end{proof}

\section*{\bf Acknowledgments}

 This work was supported by National Key Research and Development Program of China \#2023YFA1010600, \#2020YFA0713200, and NSFC for Innovative Research Groups \#12121001. Chen was supported by NSFC grant \#12071078. Hu was supported by NSFC grant \#12201397. Chen and Jiang are members of the Key Laboratory of Mathematics for Nonlinear Sciences, Fudan University. 

We are grateful to Jungkai A. Chen, Stephen Coughlan, Roberto Pignatelli, Mingchen Xia and Tong Zhang for discussions.

\end{document}